\documentclass{amsart}
\usepackage{amssymb}

\usepackage{color}
\usepackage{amssymb}
\usepackage{amsmath}
\usepackage{amscd}
\usepackage{amsbsy}
\usepackage{euscript}
\usepackage{verbatim}
\newtheorem{theorem}{Theorem}[section]
\newtheorem{lemma}[theorem]{Lemma}
\newtheorem{corollary}[theorem]{Corollary}
\newtheorem{proposition}[theorem]{Proposition}

\theoremstyle{definition}
\newtheorem{definition}[theorem]{Definition}

\newtheorem{example}[theorem]{Example}

\newtheorem{def-not}[theorem]{Definition and Notation}

\renewcommand{\qed}{\hfill $\square$}

\definecolor{darkgreen}{rgb}{0.03, 0.5, 0.03}

\newcommand{\mx}{\rm{Max}}

\newcommand{\lra}{\Leftrightarrow}

\newcommand{\st}{\star}

\newcommand{\ra}{\Rightarrow}
\newcommand{\mb}{\mathbb}

\newcommand{\bs}{\bigskip}

\newcommand{\mc}{\mathcal}

\begin{document}

\title[$\st$-power conductor domains]{On $\st$-power conductor domains}

\author{D.D. Anderson}
\address{Department of Mathematics, The University of Iowa, Iowa City, IA 52242 U.S.A.}
\email{dan-anderson@uiowa.edu}

\author{Evan Houston}
\address{Department of Mathematics and Statistics, University of North Carolina at Charlotte,
Charlotte, NC 28223 U.S.A.}
\email{eghousto@uncc.edu}

\author{Muhammad Zafrullah}
\address{Department of Mathematic, Idaho State University, Pocatello, ID 83209 U.S.A.}
\email{mzafrullah@usa.net}

\thanks{MSC 2010: 13A05, 13A15, 13E05, 13F05, 13G05}
\thanks{Key words: Star operation, Pr\"{u}fer domain, Krull domain, completely integrally closed domain}

\thanks{The second-named author was supported by a grant from the Simons Foundation (\#354565)}

\date{\today}

\begin{abstract} Let $D$ be an integral domain and $\st$ a star operation defined on $D$.  We say that $D$ is a $\st $-power conductor domain ($\st $-PCD) if for each pair $a,b\in D\backslash (0)$ and for each
positive integer $n$ we have $Da^n\cap Db^n=((Da\cap Db)^{n})^{\st }.$
We study $\st$-PCDs and characterize them as root closed domains satisfying $((a,b)^{n})^{-1}=(((a,b)^{-1})^{n})^{
\st}$ for all nonzero $a,b$ and all natural numbers $n\ge 1$.  From this it follows easily that Pr\"ufer domains are  $d$-PCDs (where $d$ denotes the trivial star operation), and $v$-domains (e.g., Krull domains) are $v$-PCDs. 
We also consider when a $\st$-PCD is completely integrally closed, and this leads to new characterizations of Krulll domains.  In particular, we show that a Noetherian domain is a Krull domain if and only if it is a $w$-PCD.
\end{abstract}

\maketitle


\section*{introduction}

Let $D$ be an integral domain with quotient field $K$.  
For $a,b \in D \setminus (0)$ and $n$ a positive integer, it is clear that $Da^n \cap Db^n \supseteq (Da \cap Db)^n$, and it is elementary that we have equality (for all $a,b,n$) if $D$ is a GCD-domain (e.g., a UFD) or a Pr\"ufer domain.  
On the other hand, Krull domains, even integrally closed Noetherian domains, may allow $Da^n \cap Db^n  \supsetneq (Da \cap Db)^n$ for some nonzero $a,b$ and $n >1$ (see \cite[Section 3]{aaj}).  However, recalling the $v$-operation on the domain $D$, given by $A_v=(D:(D:A))$ for nonzero fractional ideals $A$ of $D$ and letting $D$ be a Krull domain, we do have $Da^n \cap Db^n=((Da \cap Db)^n)_v$ for all nonzero $a,b \in D$ and $n\ge 1$.  

Now the $v$-operation is an example of a star operation.  We recall the definition:  Denoting by $\mc F(D)$ the set of nonzero fractional ideals of $D$, a map $\st:\mc F(D) \to \mc F(D)$ is a \emph{star operation} if the following conditions hold for all $A, B \in \mc F(D)$  and all $c \in K \setminus (0)$: \begin{enumerate}
\item $(cA)^{\st}=cA^{\st}$ and $D^{\st}=D$; 
\item $A \subseteq A^{\st}$, and, if $A \subseteq B$, then $A^{\st} \subseteq B^{\st}$; and 
\item $A^{\st \st}=A^{\st}$.
\end{enumerate}

The most frequently used star operations (as well as the most important for our purposes) are the $d$-operation, given, for $A \in \mc F(D)$, by $A_d=A$; the $v$-operation, defined above; the $t$-operation, given by $A_t=\bigcup B_v$, where the union is taken over all nonzero finitely generated fractional subideals $B$ of $A$; and the $w$-operation, given by $A_w=\{x \in K \mid xB\subseteq A \text{ for some finitely generated ideal $B$ of $D$ with } B_v=D\}$.  For any star operation $\st$ on $D$, we have $d \le \st \le v$, in the sense that $A=A_d \subseteq A^{\st} \subseteq A_v$ for all nonzero fractional ideals $A$ of $D$.  Other basic properties of $\st$-operations may be found in \cite[Sections 32, 34]{g} (but we do review much of what we use in the sequel).


 For a star operation $\st$ on $D$, we say that $D$ is a \emph{$\st$-power conductor domain} ($\st$-PCD) if $Da^n \cap Db^n=((Da \cap Db)^n)^{\st}$ for all $a,b \in D \setminus (0)$ and all positive integers $n$.  (The reason for the ``conductor'' terminology will become clear after Definition~\ref{d:subn} and Proposition~\ref{p:pcdequiv} below.)  As mentioned above, examples of $d$-PCDs include GCD-domains and Pr\"ufer domains, while Krull domains are $v$-PCDs.  In Section~\ref{s:weak}, as a consequence of Theorem~\ref{t:lcm-stable}, we show that essential domains, i.e., domains possessing a family $\mc P$ of prime ideals with $D= \bigcap_{P \in \mc P} D_P$ with each $D_P$ a valuation domain, are $v$-PCDs.  We also show (Proposition~\ref{p:rootclsd}) that if $D$ is a $\st$-PCD, then $D$ must be root closed and that for each maximal ideal $M$ of $D$, we must have $M$ invertible, $M^{-1}=D$, or $M=(M^2)^{\st}$.  This leads to a characterization of $\st$-PCDs as root closed domains $D$ satisfying $((a,b)^n)^{-1}=(((a,b)^{-1})^n)^{\st}$ for all nonzero $a,b \in D$. This latter condition is obviously a weakened form of invertibility; indeed, as an easy corollary we obtain that so-called $\st$-Pr\"ufer domains, domains $D$ in which each nonzero finitely generated ideal $A$ satisfies $(AA^{-1})^{\st}=D$, are $\st$-PCDs.
 
 Section~\ref{s:examples} presents examples of the subtleties involved.  Among others, we give examples of $d$-PCDs that are not integrally closed and hence not essential, $v$-PCDs that are not $d$-PCDs, and $v$-PCDs with non-$v$-PCD localizations.  In Section~\ref{s:cic} we study complete integral closure in $v$-PCDs and give several new characterizations of Krull domains.  For example, we show that $D$ is a Krull domain if and only if $D$ is a $v$-PCD in which $v$-invertible ideals are $t$-invertible and $\bigcap_{n=1}^{\infty} (M^n)_v=(0)$ for each maximal $t$-ideal $M$ of $D$.  In Section~\ref{s:w} we study two notions that have appeared previously in the literature and that  are closely related to the $d$-PCD property.  We also study $w$-PCDs and show that a Noetherian domain is a Krull domain if and only if it is a $w$-PCD.  Finally we characterize Krull domains as $w$-PCDs in which maximal $t$-ideals $M$ are divisorial and satisfy $\bigcap_{n=1}^{\infty} (M^n)_w=(0)$.
 
There are (at least) two rather natural ways to weaken the $\st$-PCD notion (see Definition~\ref{d:weak}).  We show that the three notions are distinct and, where possible (and convenient), prove results in somewhat greater generality than described above.  

We use the following notational conventions: The term ``local'' requires a ring to have a unique maximal ideal but does not require it to be Noetherian; $\subset$ denotes proper inclusion; and for fractional ideals $A,B$ of $D$, $(A:B)=\{x \in K \mid xA \subseteq B\}$, while $(A:_D B)=\{d \in D \mid dA \subseteq B\}$.


\section{Weak $\st$-PCDs} \label{s:weak}

Throughout this section, $D$ denotes a domain and $K$ its quotient field. We begin with our basic definition(s).

\begin{definition} \label{d:weak} Let $\st$ be a star operation on $D$ and $n$ a positive integer. We say that a pair $a,b\in
D\setminus (0)$
\emph{satisfies $\st _{n}$} if $Da^n\cap Db^n=((Da \cap Db)^{n})^{\st}$. We then say that $D$ \begin{enumerate}
\item is a \emph{weak $\st$-PCD} if for each pair $a,b \in D \setminus (0)$ there is an integer $m>1$, depending on $a,b$, such that $a,b$ satisfies $\st_m$; 
\item \emph{satisfies $\st_n$} if each pair of nonzero elements of $D$ satisfies $\st_n$;
\item is a \emph{$\st$-PCD} if $D$ satisfies $\st_n$ for each $n \ge 1$.
\end{enumerate}
\end{definition}

With the notation above, since the ideal $Da^n \cap Db^n$ is divisorial and hence a $\st$-ideal, it is clear that the inclusion $Da^n \cap Db^n \supseteq ((Da \cap Db)^n)^{\st}$ holds automatically.  Of course, we have equality when $n=1$. 
It is also clear that for any $n>1$, $D$ is a $\st$-PCD $\ra$ $D$ satisfies $\st_n$ $\ra$ $D$ is a weak $\st$-PCD. In Example~\ref{e:pvdstuff} below, we show that these notions are distinct when $\st$ is $d$ or $v$.

For $x,y\in D\setminus \{0\}$, we have $xy(x,y)^{-1}=xy(Dx^{-1} \cap Dy^{-1})=Dx \cap Dy =Dx(D \cap D(y/x))=Dx(Dx:_D Dy)=Dx(D:_D D(y/x))$. Hence for $a,b \in D \setminus (0)$,  $u=b/a$, $\st$ a star operation on $D$, and $n \ge 1$, we have (using the fact that we may cancel nonzero principal ideals in equations involving star operations) $Da^n \cap Db^n  = ((Da \cap Db)^n)^{\st}$ $\lra$ $(Da^n :_D Db^n)=((Da:_D Db)^n)^{\st}$ $\lra$ $(D:_D Du^n)=((D:_D Du)^n)^{\st}$ $\lra$ $(a^n,b^n)^{-1}=(((a,b)^{-1})^n)^{\st}$.
  
Motivated by this, we state the next definition and proposition.

\begin{definition} \label{d:subn} Let $\st$ be a star operation on $D$ and $n$ a positive integer. We say that an element $u \in K \setminus (0)$ \emph{satisfies $\st_n$} if $(D:_D Du^n) =((D:_D Du)^n)^{\st}$ (equivalently, $(1,u^n)^{-1}=(((1,u)^{-1})^n)^{\st}$).
\end{definition}

\begin{proposition} \label{p:pcdequiv} Let $a,b \in D \setminus (0)$, $u=b/a$, and $n \ge 1$.  The following statements are equivalent: \begin{enumerate}
\item The pair $a,b$ satisfies $\st_n$.
\item The element $u$ satisfies $\st_n$.
\item $(a^n,b^n)^{-1}=(((a,b)^{-1})^n)^{\st}$. \qed
\end{enumerate}
\end{proposition}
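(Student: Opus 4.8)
The plan is to observe that all three statements are just different encodings of a single ideal equation, linked by the conductor identity $Dx \cap Dy = xy(x,y)^{-1}$ and by cancellation of nonzero principal ideals. The algebraic inputs are already assembled in the displayed chain of equivalences preceding Definition~\ref{d:subn}, so the proof really amounts to justifying each link of that chain carefully. I would prove (1) $\lra$ (3) and (2) $\lra$ (3) separately.

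First I would establish (1) $\lra$ (3). Applying the identity $Dx \cap Dy = xy(x,y)^{-1}$ with $x=a^n$, $y=b^n$ gives $Da^n \cap Db^n = a^nb^n(a^n,b^n)^{-1}$, while applying it with $x=a$, $y=b$ and then raising to the $n$th power gives $(Da\cap Db)^n = a^nb^n((a,b)^{-1})^n$. Taking the $\st$-closure of the latter and pulling the principal factor $a^nb^n$ outside via property~(1) of a star operation yields $((Da\cap Db)^n)^{\st} = a^nb^n(((a,b)^{-1})^n)^{\st}$. Thus statement (1) reads $a^nb^n(a^n,b^n)^{-1} = a^nb^n(((a,b)^{-1})^n)^{\st}$, and cancelling the nonzero principal ideal $a^nb^n$ gives exactly statement (3).

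Next I would handle (2) $\lra$ (3) by absorbing the denominator of $u=b/a$. Since $(a,b)=a(1,u)$ and $(a^n,b^n)=a^n(1,u^n)$, we get $(a,b)^{-1}=a^{-1}(1,u)^{-1}$ and $(a^n,b^n)^{-1}=a^{-n}(1,u^n)^{-1}$. Substituting these into statement (3) and again moving the principal factor $a^{-n}$ through the $\st$-operation converts (3) into $(1,u^n)^{-1}=(((1,u)^{-1})^n)^{\st}$, which is precisely statement (2). The steps reverse, so (2) and (3) are equivalent, and combining with the first part finishes the proof.

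I do not expect a serious obstacle here, since the content is entirely formal. The only points that need care are the two extractions of a nonzero principal ideal from an equation involving $\st$ — valid because $A \mapsto cA$ is an order-preserving bijection of $\mc F(D)$ commuting with $\st$ — and keeping the bookkeeping straight when passing between the $a,b$ picture and the $u$ picture. I expect the main thing to get right is simply to state these cancellation facts once and then invoke them consistently, rather than any genuine difficulty.
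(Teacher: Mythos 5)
Your proposal is correct and matches the paper's own argument: the paper establishes this proposition via exactly the same identity $Dx \cap Dy = xy(x,y)^{-1}$ together with cancellation of nonzero principal factors through $\st$, in the displayed chain of equivalences preceding Definition~\ref{d:subn} (which is why the proposition itself carries no further proof). Your only cosmetic deviation is routing the equivalences as (1) $\lra$ (3) and (2) $\lra$ (3) rather than through the intermediate colon-ideal forms $(Da^n :_D Db^n)=((Da:_D Db)^n)^{\st}$ and $(D:_D Du^n)=((D:_D Du)^n)^{\st}$, which changes nothing of substance.
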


A consequence of the next result is that if $D$ is a $\st$-PCD for any $\st$, then $D$ is a $v$-PCD.

\begin{lemma} \label{l:ge} Let $\st' \ge \st$ be star operations on $D$, $a,b \in D \setminus (0)$, and $n \ge 1$.  \begin{enumerate}
\item If $a,b$ satisfies $\st_n$, then $a,b$ also satisfies $\st'_n$. {\rm (}Equivalently, if $u \in K \setminus (0)$ satisfies $\st_n$, then $u$ also satisfies $\st'_n$.{\rm )}. 
\item  If $D$ is a weak $\st$-PCD {\rm (}satisfies $\st_n$, is a $\st$-PCD{\rm )}, then $D$ is a weak $\st'$-PCD, {\rm (}satisfies $\st'_n$, is a $\st'$-PCD{\rm )}.
\item If $D$ is a weak $\st$-PCD {\rm (}satisfies $\st_n$, is a $\st$-PCD{\rm )}, then $D$ is a weak $v$-PCD {\rm (}satisfies $v_n$, is a $v$-PCD{\rm )}.
\end{enumerate}
\end{lemma}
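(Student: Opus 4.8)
The plan is to establish part (1) by a short sandwich argument and then obtain parts (2) and (3) as formal consequences. Write $C=(Da\cap Db)^n$. Recall from the discussion following Definition~\ref{d:weak} that $Da^n\cap Db^n$ is divisorial, and that the elementary inclusion $C\subseteq Da^n\cap Db^n$ always holds. Since $\st'\le v$, a divisorial ideal $A$ satisfies $A\subseteq A^{\st'}\subseteq A_v=A$ and is therefore a $\st'$-ideal; applying $\st'$ to $C\subseteq Da^n\cap Db^n$ then yields the automatic containment $C^{\st'}\subseteq (Da^n\cap Db^n)^{\st'}=Da^n\cap Db^n$.

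Now suppose $a,b$ satisfies $\st_n$, i.e., $Da^n\cap Db^n=C^{\st}$. Because $\st\le\st'$ we have $C^{\st}\subseteq C^{\st'}$, and combining this with the automatic containment gives
\[
Da^n\cap Db^n=C^{\st}\subseteq C^{\st'}\subseteq Da^n\cap Db^n.
\]
Hence equality holds throughout, so $Da^n\cap Db^n=C^{\st'}$; that is, $a,b$ satisfies $\st'_n$. The parenthetical reformulation in terms of $u=b/a$ is immediate from Proposition~\ref{p:pcdequiv}, since every $u\in K\setminus(0)$ can be written as $b/a$ with $a,b\in D\setminus(0)$.

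For part (2), each assertion follows by applying part (1) to the relevant pairs. If $D$ satisfies $\st_n$, then every pair of nonzero elements satisfies $\st_n$, hence $\st'_n$ by part (1), so $D$ satisfies $\st'_n$; letting this run over all $n$ shows that a $\st$-PCD is a $\st'$-PCD. For the weak version, given $a,b$ choose $m>1$ with $a,b$ satisfying $\st_m$; part (1) gives that $a,b$ satisfies $\st'_m$, so $D$ is a weak $\st'$-PCD. Finally, part (3) is the special case $\st'=v$ of part (2), which applies since $\st\le v$ for every star operation $\st$.

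I expect no serious obstacle here. The only point demanding any care is the automatic containment $C^{\st'}\subseteq Da^n\cap Db^n$, which rests entirely on the observation that a divisorial ideal is a $\st'$-ideal for \emph{every} star operation $\st'$; once that is in hand, the whole lemma reduces to the two-line sandwich computation above together with routine quantification over pairs and over $n$.
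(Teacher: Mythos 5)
Your proof is correct and follows essentially the same route as the paper: the identical sandwich $Da^n\cap Db^n=C^{\st}\subseteq C^{\st'}\subseteq Da^n\cap Db^n$, resting on the fact that the divisorial ideal $Da^n\cap Db^n$ is automatically a $\st'$-ideal, with (2) and (3) then read off as formal consequences. Your only addition is to spell out why a divisorial ideal is a $\st'$-ideal (via $A\subseteq A^{\st'}\subseteq A_v=A$), a detail the paper leaves implicit.
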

\begin{proof} (1) Assume that $a,b \in D \setminus (0)$ satisfies $\st_n$ for some $n$.  Then $(Da^n \cap Db^n)=((Da \cap Db)^n)^{\st} \subseteq ((Da \cap Db)^n)^{\st'} \subseteq Da^n \cap Db^n$ (since $Da^n \cap Db^n$ is divisorial and hence automatically a $\st'$-ideal). Statement (2) follows from (1), and (3) follows from (2).
\end{proof}.

\begin{proposition} \label{p:weakchar} Let $D$ be a domain, and let $a,b \in D\setminus (0)$. Then $a,b$  satisfies $\st_m$ for some $m>1$ if and only if there is a sequence $1<n_{1}<n_{2}< \cdots $ such that $a,b$ satisfies $\st _{n_i}$ for each $i$. Hence $D$ is a weak $\st$-PCD if and only if for each pair $a,b \in D \setminus (0)$, there is a sequence $1<n_{1}<n_{2}< \cdots $
such that $a,b$ satisfies $\st _{n_i}$ for each $i$.
\end{proposition}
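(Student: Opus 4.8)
The plan is to dispose of the two easy assertions first and then concentrate on the one implication that carries content. The implication ($\Leftarrow$) is immediate: if there is a sequence $1<n_1<n_2<\cdots$ with $a,b$ satisfying $\st_{n_i}$ for each $i$, then in particular $a,b$ satisfies $\st_{n_1}$ with $n_1>1$, so we may take $m=n_1$. The ``Hence'' sentence is then a direct translation of the first via Definition~\ref{d:weak}: $D$ is a weak $\st$-PCD precisely when every pair $a,b\in D\setminus(0)$ satisfies $\st_m$ for some $m>1$, which by the first sentence is equivalent to the existence, for each such pair, of an increasing sequence of the stated form. So everything reduces to proving ($\ra$): if $a,b$ satisfies $\st_m$ for a single $m>1$, then $a,b$ satisfies $\st_n$ for infinitely many $n$.

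For ($\ra$) I would produce the sequence $n_i=im$ ($i\ge1$); since $m>1$ this is strictly increasing, so it suffices to show $im\in G$ for all $i\ge1$, where $G=\{k\ge1 : a,b \text{ satisfies } \st_k\}$. To organize the computation I would pass, via Proposition~\ref{p:pcdequiv}, to the element $u=b/a$, so that ``$a,b$ satisfies $\st_k$'' reads $(D:_D Du^k)=((D:_D Du)^k)^{\st}$. Writing $I=Da\cap Db$ and $J_k=Da^k\cap Db^k$, the condition becomes $J_k=(I^k)^{\st}$, and the reverse containment $(I^k)^{\st}\subseteq J_k$ is automatic because $J_k$ is divisorial (as noted after Definition~\ref{d:weak}); hence $k\in G$ is equivalent to the single inclusion $J_k\subseteq (I^k)^{\st}$. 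I would also record two facts: the elementary product containment $J_sJ_t\subseteq J_{s+t}$ (a product of an element of $Da^s\cap Db^s$ with one of $Da^t\cap Db^t$ lands in both $Da^{s+t}$ and $Db^{s+t}$), and the conductor descriptions $J_k=(ab)^kW_k$, $I=ab\,T$, where $W_k=(a^k,b^k)^{-1}$ and $T=(a,b)^{-1}$, so that $k\in G \lra W_k=(T^k)^{\st}$.

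The heart is an induction on $i$ showing $im\in G$. The base $i=1$ is the hypothesis $J_m=(I^m)^{\st}$. For the step, assume $im\in G$, i.e. $J_{im}=(I^{im})^{\st}$. Using the standard star-operation identity $(A^{\st}B)^{\st}=(AB)^{\st}$ (applied twice), one computes $(J_mJ_{im})^{\st}=((I^m)^{\st}(I^{im})^{\st})^{\st}=(I^{(i+1)m})^{\st}$, so the desired conclusion $(i+1)m\in G$, namely $J_{(i+1)m}\subseteq (I^{(i+1)m})^{\st}$, is equivalent to the inclusion $J_{(i+1)m}\subseteq (J_mJ_{im})^{\st}$, or in inverse form $W_{(i+1)m}\subseteq (W_mW_{im})^{\st}$. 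Thus the whole proof is reduced to this one containment at each level.

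The main obstacle is precisely this inductive inclusion. The product containment $J_mJ_{im}\subseteq J_{(i+1)m}$ only reproduces the easy direction $(I^{(i+1)m})^{\st}\subseteq J_{(i+1)m}$; the substance is the reverse, and here the naive attempt to control an element $x\in W_{(i+1)m}$ (so $x\,a^{(i+1)m}, x\,b^{(i+1)m}\in D$) by multiplying it into the product ideal $(a^m,b^m)(a^{im},b^{im})$ fails, because the ``cross terms'' $x\,a^mb^{im}$ and $x\,a^{im}b^m$ need not lie in $D$ even though the pure-power terms do. Consequently the argument cannot be purely multiplicative: it must genuinely exploit the $\st$-closure supplied by the inductive hypothesis $W_{im}=(T^{im})^{\st}$ and the base relation $W_m=(T^m)^{\st}$ to absorb these cross terms, and this is the step I expect to require the most care. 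I would therefore arrange matters so that the hypothesis is invoked at every level of the induction rather than only at the base, keeping all the relevant ideals $\st$-closed by repeated use of $(A^{\st}B)^{\st}=(AB)^{\st}$, and verify the cross-term containments directly from the identities $W_m=(T^m)^{\st}$ and $W_{im}=(T^{im})^{\st}$.
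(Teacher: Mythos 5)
Your proposal correctly dispatches ($\Leftarrow$) and the ``Hence'' clause, but the forward implication is not proved: the inductive step is missing, and the skeleton you chose makes it unprovable by the means you have. Having reduced ``$(i+1)m\in G$'' to the inclusion $J_{(i+1)m}\subseteq (J_mJ_{im})^{\st}$, note that since you have already computed $(J_mJ_{im})^{\st}=(I^{(i+1)m})^{\st}$, this inclusion is \emph{verbatim} the statement ``$a,b$ satisfies $\st_{(i+1)m}$'' that you set out to prove -- the reduction is a restatement of the goal, not progress toward it. The cross-term obstruction you then describe ($xa^mb^{im}$ and $xa^{im}b^m$ need not lie in $D$ for $x\in W_{(i+1)m}$) is real, and your closing promise to ``verify the cross-term containments directly from $W_m=(T^m)^{\st}$ and $W_{im}=(T^{im})^{\st}$'' is never carried out; there is no visible way those two identities absorb the cross terms. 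The root of the trouble is the choice of the arithmetic progression $n_i=im$ together with an \emph{additive} induction $J_mJ_{im}\rightsquigarrow J_{(i+1)m}$: exponent addition genuinely mixes the powers of $a$ and $b$.

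The paper avoids this entirely by building the sequence \emph{multiplicatively}. Given $n_k$ with $Da^{n_k}\cap Db^{n_k}=((Da\cap Db)^{n_k})^{\st}$, choose $n>1$ so that the \emph{new pair} $a^{n_k},b^{n_k}$ satisfies $\st_n$, and set $n_{k+1}=n_kn$. Then $Da^{n_{k+1}}\cap Db^{n_{k+1}}=(Da^{n_k})^n\cap (Db^{n_k})^n=((Da^{n_k}\cap Db^{n_k})^n)^{\st}=((((Da\cap Db)^{n_k})^{\st})^n)^{\st}=((Da\cap Db)^{n_kn})^{\st}$, the last step by the standard identity $((A^{\st})^n)^{\st}=(A^n)^{\st}$. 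No cross terms ever arise, because composing exponents expresses $Da^{n_kn}\cap Db^{n_kn}$ exactly as the $\st_n$-condition for the pair $a^{n_k},b^{n_k}$, which exponent addition cannot do. Observe that this step invokes the hypothesis for the new pairs $a^{n_k},b^{n_k}$; that is exactly what the weak $\st$-PCD hypothesis of the ``Hence'' clause supplies (and that is the form the paper actually uses later, e.g.\ in Proposition~\ref{p:weakcap}), whereas your insistence on using only the single pair's hypothesis $J_m=(I^m)^{\st}$ -- even the paper's argument does not get by on that alone -- was a further sign that your single-pair additive route was headed for a wall.
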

\begin{proof} ($\Leftarrow$) Clear. 

($\Rightarrow$) We have $Da^{n_{1}} \cap
Db^{n_{1}}=((Da\cap Db)^{n_{1}})^{\st }$ for some $n_{1}>1$. Suppose 
$1<n_{1}< \cdots <n_{k}$ have been chosen so that $Da^{n_{i}}$ $\cap
Db^{n_{i}}=((Da\cap Db)^{n_{i}})^{\st }$ for $i=1,...,k$. Choose $n>1$ so
that $(Da^{n_{k}})^{n}\cap (Db^{n_{k}})^{n}=((Da^{n_{k}}\cap
Db^{n_{k}})^{n})^{\st}$. Put $n_{k+1}=n_kn$.  Then \begin{align} & Da^{n_{k+1}} \cap Db^{n_{k+1}} = (Da^{n_k})^n \cap (Db^{n_k})^n =(( Da^{n_k} \cap Db^{n_k})^n)^{\st} \notag \\  &=((((Da \cap Db)^{n_k})^{\st})^n)^{\st}= ((Da \cap Db)^{n_kn})^{\st}=((Da \cap Db)^{n_{k+1}})^{\st}, \notag \end{align}
as desired.
\end{proof}

Recall that if $D\subseteq R$ is an extension of domains, then $R$ is said to be \emph{LCM-stable  
over $D$} if $(Da\cap Db)R=Ra\cap Rb$, for each $a,b \in D$, equivalently, if $(D:_D Du)R=(R:_R Ru)$ for each $u \in K \setminus D$. (LCM-stability was introduced by R. Gilmer \cite{g2} and popularized by H. Uda \cite{u,u2}.)  Each flat overring of $D$ is LCM-stable over $D.$

Now let $\{D_{\alpha}\}_{\alpha \in \mc A}$ be a family of overrings of $D$ with $D=\bigcap_{\alpha \in \mc A} D_{\alpha}$, and for each $\alpha \in \mc A$, let $\st_{\alpha}$ be a star operation on $D_{\alpha}$.  For a nonzero  fractional ideal $I$ of $D$, set $I^{\st}=\bigcap_{\alpha \in \mc A} (ID_{\alpha})^{\st_{\alpha}}$.  Then $\st$ is a star operation on $D$ \cite{a}.

\begin{theorem} \label{t:lcm-stable} Let $\{D_{\alpha}\}_{\alpha \in \mc A}$ be a family of LCM-stable overrings of $D$ with $D=\bigcap_{\alpha \in \mc A} D_{\alpha}$, and for each $\alpha \in \mc A$, let $\st_{\alpha}$ be a star operation on $D_{\alpha}$. Let $n \ge 1$, and let $u$ be a nonzero element of $K$ such that $u$ satisfies $(\st_{\alpha})_n$ for each $\alpha$.  Then $u$ satisfies $\st_n$ {\rm (}where $\st$ is the star operation defined above{\rm )}.
In particular, $u$ satisfies $v_n$.
\end{theorem}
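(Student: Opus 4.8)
The plan is to verify directly the defining identity for ``$u$ satisfies $\st_n$,'' namely $(D:_D Du^n)=((D:_D Du)^n)^{\st}$, by decomposing each side over the family $\{D_\alpha\}_{\alpha \in \mc A}$ and matching the two decompositions index by index. Throughout I write $I:=(D:_D Du)$ for brevity. I may assume $u \in K \setminus D$, since when $u \in D$ both conductors collapse to $D$ and the identity is immediate (and this is also the case excluded in the stated form of LCM-stability).

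First I would show that the conductor computed in $D$ is the intersection of the conductors computed in the $D_\alpha$. Because $D=\bigcap_{\alpha} D_\alpha$, an element $d \in K$ lies in $(D:_D Du^n)$ precisely when $d \in D_\alpha$ and $du^n \in D_\alpha$ for every $\alpha$; hence $(D:_D Du^n)=\bigcap_{\alpha}(D_\alpha:_{D_\alpha} D_\alpha u^n)$. This is the step that carries the hypothesis $D=\bigcap_{\alpha} D_\alpha$, and it is where one must be careful that the $D$-level conductor really is recovered from the $D_\alpha$-level conductors.

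Next I would inject the two per-index hypotheses. The assumption that $u$ satisfies $(\st_\alpha)_n$ rewrites each term as $(D_\alpha:_{D_\alpha} D_\alpha u^n)=((D_\alpha:_{D_\alpha} D_\alpha u)^n)^{\st_\alpha}$, while LCM-stability of $D_\alpha$ over $D$ identifies the inner conductor as an extended ideal, $(D_\alpha:_{D_\alpha} D_\alpha u)=ID_\alpha$. Combining these with the elementary fact that extension commutes with taking powers, $(ID_\alpha)^n=I^nD_\alpha$, gives $(D_\alpha:_{D_\alpha} D_\alpha u^n)=(I^nD_\alpha)^{\st_\alpha}$ for each $\alpha$. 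Intersecting over $\alpha$ and reading off the definition of $\st$ applied to the ideal $I^n$ then yields $(D:_D Du^n)=\bigcap_{\alpha}(I^nD_\alpha)^{\st_\alpha}=(I^n)^{\st}=((D:_D Du)^n)^{\st}$, which is exactly the assertion that $u$ satisfies $\st_n$. The concluding ``in particular'' statement is immediate from Lemma~\ref{l:ge}: since $\st \le v$ for every star operation, $u$ satisfying $\st_n$ forces $u$ to satisfy $v_n$.

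I do not expect a genuine obstacle here, as the argument is a chain of identifications rather than a construction; the substantive content has already been packaged into the notion of LCM-stability and into the definition $I^{\st}=\bigcap_{\alpha}(ID_\alpha)^{\st_\alpha}$. The only places demanding real care are the first decomposition (which rests squarely on $D=\bigcap_{\alpha} D_\alpha$) and the bookkeeping identity $(ID_\alpha)^n=I^nD_\alpha$, which is precisely what makes the per-index terms line up with $\st$ evaluated at $I^n$. Once these interlock correctly, the proof closes with no further input.
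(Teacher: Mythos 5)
Your proof is correct and takes essentially the same route as the paper's: the same decomposition of the conductor over the family $\{D_\alpha\}$, the same use of LCM-stability to identify $(D_\alpha:_{D_\alpha} D_\alpha u)$ with $(D:_D Du)D_\alpha$, the same reading of the definition $I^{\st}=\bigcap_\alpha (ID_\alpha)^{\st_\alpha}$, and the same appeal to Lemma~\ref{l:ge} for the ``in particular'' claim. The only (harmless) difference is that you establish the first decomposition as an equality $(D:_D Du^n)=\bigcap_\alpha(D_\alpha:_{D_\alpha} D_\alpha u^n)$, whereas the paper records only the inclusion $\subseteq$ and closes the loop via the automatic reverse inclusion $((D:_D Du)^n)^{\st}\subseteq (D:_D Du^n)$ noted after Definition~\ref{d:weak}.
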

\begin{proof} We have \begin{align} (D:_D Du^n) &\subseteq \bigcap_{\alpha} (D_{\alpha}:_{D_{\alpha}} D_{\alpha}u^n) = \bigcap_{\alpha} ((D_{\alpha}:_{D_{\alpha}} D_\alpha u)^n)^{\st_{\alpha}} \notag\\ &= \bigcap_{\alpha} (((D:_D Du)^n)D_{\alpha})^{\st_{\alpha}} = ((D:_D Du)^n)^{\st}. \notag \end{align} The ``in particular'' statement follows from Lemma~\ref{l:ge}.
\end{proof}

Recall that the domain $D$ is said to be \emph{essential} if $D=\bigcap_{P \in \mc P} D_P$ for some family $\mc P$ of primes of $D$ with each $D_P$ a valuation domain.  Since valuation domains are $d$-PCDs and localizations are LCM-stable, the next two results are immediate.

\begin{corollary} \label{c:essential} An essential domain $D$ is a $v$-PCD. \qed
\end{corollary}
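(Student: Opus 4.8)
The plan is to apply Theorem~\ref{t:lcm-stable} directly to the essential representation $D=\bigcap_{P \in \mc P} D_P$, viewing it as an intersection of overrings. First I would check the two hypotheses of that theorem. For LCM-stability: each localization $D_P$ is a flat overring of $D$, and flat overrings are LCM-stable over $D$ (as recalled just before Theorem~\ref{t:lcm-stable}); thus $\{D_P\}_{P \in \mc P}$ is a family of LCM-stable overrings whose intersection is $D$. For the star operations attached to the pieces, I would take $\st_P = d$ (the trivial star operation) on each valuation domain $D_P$.

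Next I would observe that each $D_P$, being a valuation domain, is a $d$-PCD: valuation domains are Pr\"ufer, and Pr\"ufer domains are $d$-PCDs (noted in the introduction). Here it is worth recording that the quotient field of each $D_P$ is again $K$, so that a single nonzero $u \in K$ may be fed into all of the local conditions at once. Since each $D_P$ is a $d$-PCD, every nonzero $u \in K$ satisfies $(\st_P)_n = d_n$ on $D_P$, for every $P \in \mc P$ and every $n \ge 1$. Theorem~\ref{t:lcm-stable} then applies and, by its ``in particular'' clause, yields that every nonzero $u \in K$ satisfies $v_n$ for all $n \ge 1$.

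Finally I would translate this back to the language of pairs using Proposition~\ref{p:pcdequiv}: writing an arbitrary pair $a,b \in D \setminus (0)$ as $u = b/a$, the fact that $u$ satisfies $v_n$ is equivalent to the pair $a,b$ satisfying $v_n$. Hence $D$ satisfies $v_n$ for every $n$, which is exactly the assertion that $D$ is a $v$-PCD. I expect no serious obstacle in this argument, as essentially all of the content is carried by Theorem~\ref{t:lcm-stable}; the only points demanding any care are the (standard) verification that valuation domains are $d$-PCDs and the bookkeeping ensuring that one element $u \in K$ simultaneously witnesses the local $d_n$ conditions across the whole family $\mc P$.
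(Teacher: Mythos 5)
Your proposal is correct and is essentially the paper's own argument: the authors likewise deduce the corollary immediately from Theorem~\ref{t:lcm-stable}, using that valuation domains are $d$-PCDs and that localizations are LCM-stable, with the ``in particular'' clause (via Lemma~\ref{l:ge}) supplying the passage from the induced star operation to $v_n$. Your extra bookkeeping (taking $\st_P=d$, noting the common quotient field $K$, and translating back to pairs via Proposition~\ref{p:pcdequiv}) just makes explicit what the paper leaves implicit.
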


\begin{corollary} \label{c:locvpcd} Let $n \ge 1$. If $D_M$ satisfies $v_n$ for each maximal ideal $M$ of $D$, then $D$ satisfies $v_n$.  In particular, if each $D_M$ is a $v$-PCD, then $D$ is a $v$-PCD. \qed
\end{corollary}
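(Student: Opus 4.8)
The plan is to instantiate Theorem~\ref{t:lcm-stable} with the family of localizations $\{D_M\}$ of $D$ at its maximal ideals $M$. First I would recall the two standard facts that make this family eligible: $D=\bigcap_M D_M$, and each $D_M$, being a localization of $D$ and hence a flat overring, is LCM-stable over $D$ (as noted just before the theorem). I would then take $\st_M$ to be the $v$-operation $v_{D_M}$ on each $D_M$, so that the induced star operation on $D$ is $I^{\st}=\bigcap_M (ID_M)^{v_{D_M}}$. With this setup in place, the corollary should fall out of the theorem with essentially no new work; the remark preceding the statement already advertises it as ``immediate.''

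The one point requiring care is translating the hypothesis into the form demanded by the theorem. By Definition~\ref{d:weak}(2), the statement that $D_M$ satisfies $v_n$ means that every pair of nonzero elements of $D_M$ satisfies $v_n$ in $D_M$; by Proposition~\ref{p:pcdequiv} this is equivalent to saying that every $u=b/a$ with $a,b\in D_M\setminus (0)$ satisfies $(v_{D_M})_n$. Since $D_M$ has quotient field $K$ and every element of $K$ is a ratio of two nonzero elements of $D$ (hence of $D_M$), these $u$ exhaust $K\setminus (0)$. Thus the hypothesis says precisely that every $u\in K\setminus (0)$ satisfies $(v_{D_M})_n$ for each maximal ideal $M$. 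Theorem~\ref{t:lcm-stable} then applies and gives, via its ``in particular'' clause, that each such $u$ satisfies $v_n$ with respect to $D$. Invoking Proposition~\ref{p:pcdequiv} once more, every pair $a,b\in D\setminus (0)$ satisfies $v_n$, that is, $D$ satisfies $v_n$.

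Finally, the ``in particular'' assertion of the corollary is immediate from the first part: if each $D_M$ is a $v$-PCD, then $D_M$ satisfies $v_n$ for every $n\ge 1$, so $D$ satisfies $v_n$ for every $n\ge 1$, which is exactly the statement that $D$ is a $v$-PCD. The only genuine obstacle here is the bookkeeping in the translation step (matching the ``$D_M$ satisfies $v_n$'' language with the element-wise hypothesis of the theorem, and checking that the relevant $u$ range over all of $K\setminus (0)$); the substantive content is entirely carried by Theorem~\ref{t:lcm-stable}, so no further ideas are needed.
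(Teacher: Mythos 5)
Your proposal is correct and is exactly the argument the paper intends: the corollary is stated as immediate from Theorem~\ref{t:lcm-stable} applied to the family $\{D_M\}$ of localizations (which are LCM-stable overrings intersecting to $D$) with $\st_M$ the $v$-operation on each $D_M$, the conclusion then following from the theorem's ``in particular'' clause (i.e., Lemma~\ref{l:ge}). Your translation via Proposition~\ref{p:pcdequiv} between the pairwise and element-wise formulations, and the observation that the ratios $b/a$ exhaust $K\setminus(0)$, is just the bookkeeping the paper leaves implicit.
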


The converses of both of these are false--see Examples~\ref{e:pvdstuff}(4) and \ref{e:nonloc} below.  But the $d$-PCD property holds if and only if it holds locally:

\begin{corollary} \label{c:loc} Let $n \ge 1$. The following statements are equivalent. 
\begin{enumerate}
\item $D$ satisfies $d_n$.
\item $D_S$ satisfies $d_n$ for each multiplicatively closed subset $S$ of $D$.
\item $D_P$ satisfies $d_n$ for each prime ideal $P$ of $D$.
\item $D_M$ satisfies $d_n$ for each maximal ideal $M$ of $D$.
\end{enumerate}
The statements remain equivalent if ``satisfies $d_n$'' is replaced by ``is a $d$-PCD.''
\end{corollary}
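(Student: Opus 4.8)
The plan is to prove the cyclic chain $(1) \Rightarrow (2) \Rightarrow (3) \Rightarrow (4) \Rightarrow (1)$, noting that $(2) \Rightarrow (3) \Rightarrow (4)$ are immediate upon specializing $S$ to the complement of a prime and then to the complement of a maximal ideal. The two substantive implications are $(1) \Rightarrow (2)$ and $(4) \Rightarrow (1)$, and both rest on the single feature that makes the $d$-operation special: the defining identity $Da^n \cap Db^n = (Da \cap Db)^n$ involves no nontrivial closure, and so both of its sides behave well under localization.

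For $(1) \Rightarrow (2)$, I would fix a multiplicatively closed set $S$ and a pair $\alpha, \beta \in D_S \setminus (0)$. Writing $\alpha = a/s$ and $\beta = b/t$ with $a, b \in D \setminus (0)$ and $s, t \in S$, and using that $s$ and $t$ are units in $D_S$, I reduce to $D_S \alpha = D_S a$ and $D_S \beta = D_S b$, so it suffices to test the pair $a,b$. Now localization commutes with finite intersections and with formation of powers of ideals, so $D_S a^n \cap D_S b^n = (Da^n \cap Db^n)_S$ and $(D_S a \cap D_S b)^n = ((Da \cap Db)^n)_S$; applying the hypothesis $Da^n \cap Db^n = (Da \cap Db)^n$ in $D$ and localizing yields the desired equality in $D_S$.

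For $(4) \Rightarrow (1)$, I would verify the nontrivial inclusion $Da^n \cap Db^n \subseteq (Da \cap Db)^n$ by checking it at each maximal ideal $M$, using that equality of ideals of $D$ may be tested locally. Since $(Da^n \cap Db^n)_M = D_M a^n \cap D_M b^n$ and $((Da \cap Db)^n)_M = (D_M a \cap D_M b)^n$, the hypothesis that $D_M$ satisfies $d_n$ gives equality of the two localizations at every $M$, whence equality in $D$. This is the local-to-global step that for the $v$-operation yields only Corollary~\ref{c:locvpcd} in one direction; the point here is that the $d$-operation on each $D_M$ is the identity, so there is no discrepancy between the star operation upstairs and downstairs to obstruct the reverse passage.

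Finally, the ``$d$-PCD'' version follows formally: since each equivalence is established for a fixed $n$, quantifying over all $n \ge 1$ preserves it, so ``$D$ is a $d$-PCD'' (i.e.\ $D$ satisfies $d_n$ for every $n$) is equivalent to the corresponding statement for every $D_S$, $D_P$, and $D_M$. I expect no genuine obstacle; the only care required is the routine verification that localization commutes with the finite intersection and with the $n$-th power that appear on the two sides of the $d_n$ identity.
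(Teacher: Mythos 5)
Your proof is correct and is essentially the paper's argument: both directions come down to the fact that localization commutes with finite intersections and powers of ideals (the LCM-stability used in the paper) together with the local--global principle $A=\bigcap_M AD_M$. The only cosmetic differences are that the paper obtains (4) $\Rightarrow$ (1) by specializing Theorem~\ref{t:lcm-stable} to the family $\{D_M\}$ with the induced star operation $A^{\st}=\bigcap_M AD_M$ equal to $d$, and phrases (1) $\Rightarrow$ (2) in the equivalent conductor form $(D_S:_{D_S}D_Su^n)=(D:_D Du^n)D_S$ (cf.\ Proposition~\ref{p:pcdequiv}), whereas you verify both steps directly with $Da\cap Db$.
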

\begin{proof}  Assume that $D_M$ satisfies $d_n$ for each maximal ideal $M$ of $D$. By Theorem~\ref{t:lcm-stable}, $D$ satisfies $\st_n$ for the star operation given by $A^{\st}= \bigcap_{M \in \mx(D)} AD_{M}$. However, $\st=d$ in this case.  Hence (4) $\ra$ (1).  Now assume that $D$ satisfies $d_n$, let $S$ be a multiplicatively closed subset of $D$, and let $u \in K \setminus (0)$. Then $(D_S:_{D_S} D_Su^n)=(D:_D Du^n)D_S=(D:_D Du)^nD_S=(D_S:_{D_S} D_Su)^n$.  This gives (1) $\ra$ (2).  The other implications are trivial.
\end{proof}

In order to get the $v$-PCD property to pass to quotient rings, we need a finiteness condition. Recall that $D$ is said to be \emph{$v$-coherent} if $I^{-1}$ is a $v$-ideal of finite type for each nonzero finitely generated $I$ of $D.$  Obviously, Noetherian domains are $v$-coherent.  More generally, Mori domains, domains satisfying the ascending chain condition on divisorial ideals, are $v$-coherent.

\begin{corollary} \label{c:vcoh} Let $D$ be a $v$-coherent domain and $n \ge 1$. Then the following statements are
equivalent. \begin{enumerate}
\item $D$ satisfies $v_n$.
\item $D_{S}$ satisfies $v_n$ for every multiplicatively closed subset $S$ of $D$.
\item $D_{P}$ satisfies $v_n$ for every prime ideal $P$ of $D$.
\item $D_{M}$ satisfies $v_n$ for every maximal ideal $M$ of $D$.
\item $D_{M}$ satisfies $v_n$ for every maximal $t$-ideal $M$ of $D$.
\item There is a family $\mc P=\{P\}$ of prime ideals of $D$ such that $D=\cap_{P \in \mc P} D_{P}$ and $D_{P}$  satisfies $v_n$ for every $P\in \mc P$.
\end{enumerate}
The statements remain equivalent if ``satisfies $v_n$'' is replaced by ``is a $v$-PCD.''
\end{corollary}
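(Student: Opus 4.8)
The plan is to prove the six statements equivalent for a fixed $n$ via the scheme
\[(1) \ra (2) \ra (3) \ra (4) \ra (6) \ra (1), \qquad (2) \ra (5) \ra (6),\]
and then to obtain the ``$v$-PCD'' version simply by intersecting the equivalences over all $n \ge 1$, since $D$ is a $v$-PCD precisely when it satisfies $v_n$ for every $n$. Most of the arrows are formal and use no finiteness hypothesis. For $(2) \ra (3)$ and $(2) \ra (5)$ one notes that localization at a prime $P$ (respectively a maximal $t$-ideal $M$) is localization at the multiplicatively closed set $D \setminus P$ (respectively $D \setminus M$); $(3) \ra (4)$ holds because maximal ideals are prime. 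For $(4) \ra (6)$ take $\mc P=\mx(D)$, and for $(5) \ra (6)$ take $\mc P$ to be the set of maximal $t$-ideals, using the standard fact that $D = \bigcap D_M$ as $M$ ranges over the maximal $t$-ideals. Finally $(6) \ra (1)$ is immediate from Theorem~\ref{t:lcm-stable} (its ``in particular'' statement), since each $D_P$ is a flat, hence LCM-stable, overring of $D$.

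The whole weight of the corollary therefore rests on $(1) \ra (2)$, which is where $v$-coherence enters and which I expect to be the main obstacle. Fix a multiplicatively closed set $S$ and a nonzero $u \in K$; I must show $u$ satisfies $v_n$ in $D_S$. Writing $v'$ for the $v$-operation on $D_S$, the idea is to transport the identity $(D:_D Du^n) = ((D:_D Du)^n)_v$ (which holds by (1)) down to $D_S$. For any finitely generated ideal $I$ of $D$ one has $(D:I)D_S = (D_S : ID_S)$ (localization commutes with the finite intersection $\bigcap_j x_j^{-1}D$); applied to $I=(1,u^n)$ this gives $(D_S :_{D_S} D_S u^n) = (D:_D Du^n)D_S = ((D:_D Du)^n)_v D_S$.

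The remaining task is to interchange the outer $v$ with the localization. Here I would first establish the auxiliary fact that, for $v$-coherent $D$ and any finitely generated ideal $C$, one has $(CD_S)_{v'} = C_v D_S$: by $v$-coherence $(D:C) = E_v$ for some finitely generated $E$, whence $C_v = (D:(D:C)) = (D:E_v) = (D:E)$, and the inclusion $(D:E)E_v \subseteq D$ yields $(D_S : E_v D_S) = (D:E)D_S$, giving the claim. Now set $A = (D:_D Du) = (1,u)^{-1}$; by $v$-coherence $A = B_v$ with $B$ finitely generated, so $(A^n)_v = (B^n)_v$ with $B^n$ finitely generated. Combining, the left-hand side becomes $((D:_D Du)^n)_v D_S = (B^n)_v D_S = ((B^n)D_S)_{v'} = ((BD_S)^n)_{v'}$, while $(D_S :_{D_S} D_S u) = AD_S = B_v D_S = (BD_S)_{v'}$ gives $((D_S :_{D_S} D_S u)^n)_{v'} = ((BD_S)^n)_{v'}$ as well; hence $u$ satisfies $v_n$ in $D_S$, as required.

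The only genuinely delicate point is the auxiliary lemma $(CD_S)_{v'} = C_v D_S$: the inclusion $C_v D_S \subseteq (CD_S)_{v'}$ is automatic, but the reverse can fail without a finiteness hypothesis, and it is exactly $v$-coherence that forces $(D:C)$ to be of finite type and makes the computation go through. Everything else is bookkeeping with the standard identities $(I_vJ)_v = (IJ)_v$ and the commutation of inverses of finitely generated ideals with localization.
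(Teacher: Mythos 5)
Your proof is correct and follows essentially the same route as the paper: the identical implication scheme, with $(6) \ra (1)$ via the ``in particular'' statement of Theorem~\ref{t:lcm-stable} and $(1) \ra (2)$ resting on the commutation identity $A_vD_S=(AD_S)_{v_S}$ for finite-type $v$-ideals in $v$-coherent domains, which is exactly \cite[Lemma 2.5]{bz} as invoked in the paper. The only difference is that you supply a correct self-contained proof of that identity (writing $(D:C)=E_v$ with $E$ finitely generated and comparing $(D_S:E_vD_S)$ with $(D_S:ED_S)$) where the paper simply cites it.
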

\begin{proof}  Let $n > 1$ and $u \in K \setminus (0)$.  Assume that $u$ satisfies $v_n$, and let $S$ be a multiplicatively closed subset of $D$.  According to \cite[Lemma 2.5]{bz}, if $A$ is a $v$-ideal of finite type in the $v$-coherent domain $D$, then $A_vD_S=(AD_S)_{v_S}$ (where $v_S$ is the $v$-operation on $D_S$). Using this, we have $(D_S :_{D_S} D_Su^n)=(D:_D Du^n)D_S = ((D:_D Du)^n)_vD_S= ((D:_D Du)^nD_S)_{v_S}=((D_S:_{D_S} D_Su)^n)_{v_S}$.  The implication (1) $\ra$ (2) follows.  Implications (2) $\ra$ (3) $\ra$ (4) $\ra$ (6) and (3) $\ra$ (5) $\ra$ (6) are trivial.  Finally, (6) $\ra$ (1) by Theorem~\ref{t:lcm-stable}.
\end{proof}

In Corollary~\ref{c:loc} the existence of a family $\mc P=\{P\}$ of prime ideals of $D$ such that $D=\cap_{P \in \mc P} D_{P}$ and $D_{P}$ is a $d$-PCD does not suffice to conclude that $D$ is a $d$-PCD:  A Krull domain $D$ obviously has the property that $D_P$ is a $d$-PCD for each maximal $t$-ideal $P$ of $D$, but such a $D$ is a $d$-PCD if and only if each $P^n$ is divisorial, and a Krull domain need not have this property (\cite[comment following Lemma 3.7]{aaj}).  (We revisit this in Section 4 below.)

We shall make frequent use of the next result.  For rings $R \subseteq S$ and a positive integer $n$, we say that $R$ is \emph{$n$-root closed in $S$} if $u \in S \setminus R$ implies $u^n \notin R$, and we say that the domain $D$ is $n$-root closed if $D$ is $n$-root closed in $K$.

\begin{proposition} \label{p:rootclsd} Let $D$ be a domain, and let $\st$ be a star operation on $D$. \begin{enumerate}
\item If $u \in K \setminus D$ satisfies $\st_n$, then $u^n \notin D$.
\item If $D$ is a weak $\st$-PCD and $M$ is a maximal ideal of $D$, then $M$ must satisfy one of the following conditions: \begin{enumerate}
\item $M$ is invertible.
\item $M^{-1}=D$.
\item $M=(M^2)^{\st}$.
\end{enumerate}
\item If $D$ satisfies $\st_n$ for some $n >1$ {\rm (}is a $\st$-PCD{\rm )}, then $D$ is $n$-root closed {\rm (}is root closed{\rm )}, and each maximal ideal of $D$ must satisfy one of the conditions above.
\end{enumerate}
\end{proposition}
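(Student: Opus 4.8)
The plan is to establish the three parts in order, reducing (3) to (1) and (2). Throughout, the single algebraic fact I expect to lean on is the identity $(D:B)=(D:B^{\st})$, valid for every star operation (since $B\subseteq B^{\st}$ gives one inclusion, and $xB\subseteq D\ra (xB)^{\st}=xB^{\st}\subseteq D$ gives the other).

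For part (1) I would argue by contradiction, passing immediately to the dual (conductor) formulation of Definition~\ref{d:subn}. Assume $u\in K\setminus D$ satisfies $\st_n$ but $u^n\in D$, and set $J=(D:_D Du)=(1,u)^{-1}$; note $J$ is a proper ideal since $1\in J$ exactly when $u\in D$. Because $u^n\in D$ we have $(D:_D Du^n)=D$, so the hypothesis $\st_n$ reads $(J^n)^{\st}=D$. The identity above then yields $(D:J^n)=(D:D)=D$. On the other hand $(1,u)^nJ^n=((1,u)(1,u)^{-1})^n\subseteq D$, so $(1,u)^n\subseteq(D:J^n)=D$; since $u\in(1,u)\subseteq(1,u)^n$, this forces $u\in D$, the desired contradiction.

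For part (2) I would fix a maximal ideal $M$ failing (a) and (b) and aim for $M=(M^2)^{\st}$. Non-invertibility together with maximality first gives $MM^{-1}=M$ (as $M\subseteq MM^{-1}\subseteq D$ with $MM^{-1}\ne D$). Choose $t\in M^{-1}\setminus D$; since $tM\subseteq D$ we have $M\subseteq(D:_D Dt)\subsetneq D$, so maximality gives $(D:_D Dt)=M$. The weak $\st$-PCD hypothesis provides $m>1$ with $t$ satisfying $\st_m$, i.e. $(D:_D Dt^m)=(M^m)^{\st}$. Now $tM\subseteq M^{-1}M=M$ forces $t^mM\subseteq M$, while part (1) gives $t^m\notin D$; the same maximality argument then shows $(D:_D Dt^m)=M$, whence $(M^m)^{\st}=M$. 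Finally $M^{\st}=M$, for if $M^{\st}=D$ then $M^{-1}=(D:M^{\st})=D$, against (b); combining $m\ge2$ with monotonicity gives $M=(M^m)^{\st}\subseteq(M^2)^{\st}\subseteq M^{\st}=M$, so $(M^2)^{\st}=M$.

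Part (3) then follows formally. If $D$ satisfies $\st_n$ for some $n>1$, then every nonzero $u\in K\setminus D$ satisfies $\st_n$, so part (1) gives $u^n\notin D$, which is precisely $n$-root closedness; a $\st$-PCD satisfies $\st_n$ for all $n$ and is thus root closed. The statement on maximal ideals is immediate from part (2), since satisfying $\st_n$ for some $n>1$ makes $D$ a weak $\st$-PCD. I expect the main obstacle to be part (2): the subtle points are choosing $t\in M^{-1}\setminus D$ and checking that its powers remain in $M^{-1}$ so that the conductor $(D:_D Dt^m)$ collapses back to $M$ (this is exactly where $MM^{-1}=M$ and part (1) are both needed), together with ruling out $M^{\st}=D$, which is where the hypothesis $M^{-1}\ne D$ enters through $(D:B)=(D:B^{\st})$. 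Parts (1) and (3) should be routine once that identity is available.
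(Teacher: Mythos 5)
Your proof is correct and follows essentially the same route as the paper's: part (2) is the paper's argument almost verbatim (pick $t\in M^{-1}\setminus D$, use $MM^{-1}=M$ to get $(D:_D Dt)=(D:_D Dt^m)=M$, deduce $M=(M^m)^{\st}$, then squeeze to $M=(M^2)^{\st}$), and part (3) reduces to (1) and (2) exactly as in the paper. The only cosmetic difference is in (1): the paper argues directly via the chain $(D:_D Du^n)=((D:_D Du)^n)^{\st}\subseteq (D:_D Du)^{\st}=(D:_D Du)\subset D$, using that the conductor $(D:_D Du)$ is divisorial and hence a $\st$-ideal, whereas you reach the same conclusion by contradiction through the dual identity $(D:B)=(D:B^{\st})$ and the inclusion $(1,u)^n\subseteq (D:J^n)$ --- two equivalent packagings of the same fact.
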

\begin{proof} (1) Let $u \in K \setminus D$, and assume that $u$ satisfies $\st_n$, $n>1$.  Then $(D:_D Du^n)=((D:_D Du)^n)^{\st} \subseteq (D:_D Du)^{\st}=(D:_D Du) \subset D$. (The last equality follows since $(D:_D Du)$ is divisorial and hence a $\st$-ideal.)

(2) Assume that $D$ is a weak $\st$-PCD, let $M$ be a maximal ideal of $D$, and assume that $M^{-1} \ne D$ and that $M$ is not invertible.  We may then find $u \in M^{-1} \setminus D$, and we have $M=(D:_D Du)$.  By hypothesis, $u$ satisfies $\st_n$ for some $n>1$.  Since $M$ is not invertible, we must have $Mu \subseteq M$, whence $Mu^n \subseteq M$.  By (1), $u^n \notin D$, and hence $M=(D:_D Du^n)=((D:_D Du)^n)^{\st}=(M^n)^{\st}$.  It then follows rather easily that $M=(M^2)^{\st}$: $$M=(M^n)^{\st} \subseteq (M^2)^{\st} \subseteq M^{\st}=((M^n)^{\st})^{\st}=(M^n)^{\st}=M.$$

(3) This follows from (1) and (2).
\end{proof}

Next, we characterize ($n$-)root closed domains.

\begin{proposition} \label{p:rootclsdchar} For $n \ge 1$, a domain $D$ is $n$-root closed if and only if $(1,u^n)^{-1}=((1,u)^n)^{-1}$ for all $u \in K$ {\rm (}equivalently, $(a^n,b^n)^{-1}=((a,b)^n)^{-1}$ for all nonzero $a,b \in D${\rm )}.
\end{proposition}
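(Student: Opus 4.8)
The plan is to prove the equivalence of $n$-root closedness with the divisorial identity $(1,u^n)^{-1}=((1,u)^n)^{-1}$ by unwinding both sides as explicit fractional ideals and comparing them. The key observation is that for a nonzero element $u\in K$, the inverse $(1,u)^{-1}=D\cap Du^{-1}$, and likewise $(1,u^n)^{-1}=D\cap Du^{-n}$. So the identity to be proved amounts to $D\cap Du^{-n}=((1,u)^n)^{-1}$, and the right-hand side can be rewritten using $((1,u)^n)^{-1}=(D:(1,u)^n)=\bigcap_{k=0}^n (D:Du^k)=\bigcap_{k=0}^n Du^{-k}$. The parenthetical reformulation in terms of $a,b$ follows by clearing denominators (writing $u=b/a$ and multiplying through by $a^n$), exactly as in the cancellation argument preceding Definition~\ref{d:subn}; I would handle it in one sentence at the end rather than repeat the computation.

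The forward direction ($n$-root closed $\Rightarrow$ the identity) is where I would concentrate effort. First note that $((1,u)^n)^{-1}\subseteq (1,u^n)^{-1}$ always holds, since $(1,u^n)\subseteq (1,u)^n$ forces the reverse containment of inverses. So it suffices to prove $(1,u^n)^{-1}\subseteq ((1,u)^n)^{-1}$, i.e.\ that every $x\in K$ with $x\in D$ and $xu^n\in D$ already satisfies $xu^k\in D$ for all $0\le k\le n$. This is the crux: given $x, xu^n\in D$, I want to conclude $xu, xu^2,\dots, xu^{n-1}\in D$. If $u\in D$ this is trivial, so assume $u\notin D$; then $n$-root closedness gives $u^n\notin D$. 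The natural strategy is to play the elements $xu^k$ against each other—for instance, observing that $(xu^k)^n = x^{n-k}(xu^n)^k$ lies in $D$, so each $xu^k$ is an element of $K$ whose $n$th power is in $D$; by $n$-root closedness this forces $xu^k\in D$, which is exactly what we need.

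I expect the main obstacle to be precisely this last inference: $n$-root closedness as defined says $u\in K\setminus D\Rightarrow u^n\notin D$, i.e.\ $w^n\in D\Rightarrow w\in D$ for $w\in K$, so the computation $(xu^k)^n=x^{n-k}(xu^n)^k\in D$ does immediately yield $xu^k\in D$ once I verify that the exponent bookkeeping is correct for $0\le k\le n$ (both exponents $n-k$ and $k$ are nonnegative, so both factors lie in $D$). This is clean, and I should double-check the boundary cases $k=0$ (giving $x\in D$, assumed) and $k=n$ (giving $xu^n\in D$, assumed) so that the chain $\bigcap_{k=0}^n Du^{-k}$ is genuinely recovered.

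The reverse direction (the identity $\Rightarrow$ $n$-root closed) is routine and I would dispatch it quickly: if $u\in K$ with $u^n\in D$, then $u^{-n}\in (D:Du^n)=(1,u^n)^{-1}$ when $u\ne0$ (taking the appropriate representative), so by hypothesis $u^{-n}\in ((1,u)^n)^{-1}\subseteq (D:Du)=(1,u)^{-1}$, forcing $u^{-n}u\in D$; chasing which powers of $u$ this places in $D$ and combining with $u^n\in D$ yields $u\in D$. Alternatively, and perhaps more transparently, I would apply the identity together with the always-true containment structure to show that $u^n\in D$ makes $(1,u^n)^{-1}=D$, hence $((1,u)^n)^{-1}=D$, and then extract $u\in D$ directly; I would pick whichever of these gives the shortest clean argument once the forward direction has fixed the notation $(1,u)^{-1}=D\cap Du^{-1}$.
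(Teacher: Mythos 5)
Your proposal is correct and, in its main line, is essentially the paper's own proof: the forward direction hinges on exactly the paper's computation $(ru^k)^n=r^{n-k}(ru^n)^k\in D$ for $0\le k\le n$ followed by an appeal to $n$-root closedness, and the second of your two sketches for the converse ($u^n\in D$ forces $(1,u^n)^{-1}=D$, hence $((1,u)^n)^{-1}=D$, hence $(1,u)^n\subseteq D$ and so $u\in D$) is precisely the paper's argument. One caution: your first sketch of the converse contains a genuine error and should be discarded. You assert $u^{-n}\in(D:Du^n)=(1,u^n)^{-1}$, but $(1,u^n)^{-1}=\bigl(D:(1,u^n)\bigr)=D\cap Du^{-n}$, which is strictly smaller than $(D:Du^n)=Du^{-n}$ unless $u^{-n}\in D$; indeed, for $u$ a nonunit of $D$ one has $u^n\in D$ yet $u^{-n}\notin(1,u^n)^{-1}$, so that route fails at its first step (and the subsequent ``chasing powers'' is left vague in any case). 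Since you explicitly defer to whichever converse is cleaner, commit to the second; with that choice your proof is complete and coincides with the paper's.
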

\begin{proof} Assume that $D$ is an $n$-root closed domain, and let $u \in K$.  Since $(1,u^n) \subseteq (1,u)^n$, we have $(1,u^n)^{-1} \supseteq ((1,u)^n)^{-1}$.  Let $r \in (1,u^n)^{-1}$, that is, let $r,ru^n \in D$.  Then for $1 \le k \le n$, we have $(ru^k)^n=r^{n-k}(ru^n)^k \in D$, and, since $D$ is $n$-root closed, we obtain $ru^k \in D$.  Hence $r \in ((1,u)^n)^{-1}$, as desired.

Conversely, assume $(1,u^n)^{-1}=((1,u)^n)^{-1}$ for each $u \in K$.  Then if $u^n \in D$, the left side of the equation is equal to $D$, and the right side then puts $u \in D$.
\end{proof}

For $u \in K$, we always have $(1,u^n)^{-1} \supseteq ((1,u)^n)^{-1} \supseteq (((1,u)^{-1})^n)^{\st}$ for any star operation $\st$.  It follows that $u$ satisfies $\st_n$ if and only if both inclusions are equalities. 
If we combine this observation with Proposition~\ref{p:rootclsdchar}, we obtain the following characterization of the $\st$-PCD property.

\begin{theorem} \label{t:pcdchar} A domain $D$ satisfies $\st_n$ if and only if it is $n$-root closed and $((1,u)^n)^{-1}=(((1,u)^{-1})^n)^{\st}$ for each $u \in K$. Hence $D$ is a $\st$-PCD if and only if $D$ is root closed and $((1,u)^n)^{-1}=(((1,u)^{-1})^n)^{\st}$ for each $u \in K$ and $n \ge 1$. {\rm (}Equivalently, $D$ is a $\st$-PCD if and only if $D$ is root closed and $((a,b)^n)^{-1}=(((a,b)^{-1})^n)^{\st}$ for all nonzero $a,b \in D$ and $n \ge 1$.{\rm )}
\end{theorem}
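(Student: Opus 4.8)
The plan is to read the theorem off the chain of inclusions displayed immediately before the statement, combined with Proposition~\ref{p:rootclsdchar}. Recall that for every nonzero $u \in K$ one has
\[
(1,u^n)^{-1} \supseteq ((1,u)^n)^{-1} \supseteq (((1,u)^{-1})^n)^{\st},
\]
the first inclusion because $(1,u^n) \subseteq (1,u)^n$, and the second because $((1,u)^{-1})^n (1,u)^n = ((1,u)^{-1}(1,u))^n \subseteq D$, so that $((1,u)^{-1})^n \subseteq ((1,u)^n)^{-1}$, the latter being divisorial and hence a $\st$-ideal. By Definition~\ref{d:subn}, $u$ satisfies $\st_n$ exactly when the two outer terms agree; since these bracket the chain, this happens if and only if \emph{both} inclusions are equalities.

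First I would treat the two equalities separately. The equality $(1,u^n)^{-1}=((1,u)^n)^{-1}$ holds for every $u$ if and only if $D$ is $n$-root closed, which is precisely Proposition~\ref{p:rootclsdchar}. The equality $((1,u)^n)^{-1}=(((1,u)^{-1})^n)^{\st}$ is the condition appearing in the statement. Because a universally quantified conjunction splits---``for all $u$, $P(u)$ and $Q(u)$'' is the same as ``(for all $u$, $P(u)$) and (for all $u$, $Q(u)$)''---it follows that $D$ satisfies $\st_n$, i.e.\ every $u$ satisfies $\st_n$, if and only if $D$ is $n$-root closed and $((1,u)^n)^{-1}=(((1,u)^{-1})^n)^{\st}$ for each $u \in K$. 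This gives the first sentence.

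Next I would obtain the $\st$-PCD characterization by quantifying over $n$: by definition $D$ is a $\st$-PCD iff it satisfies $\st_n$ for every $n \ge 1$, so applying the first sentence for each $n$ and using that $D$ is root closed precisely when it is $n$-root closed for every $n$ yields the ``Hence'' sentence. For the final parenthetical reformulation, I would set $u=b/a$ and write $(a,b)=a(1,u)$; multiplying the equation $((1,u)^n)^{-1}=(((1,u)^{-1})^n)^{\st}$ by the principal ideal $a^{-n}$ (which commutes with both inversion and the star closure by axiom~(1) of a star operation together with the identity $(cA)^{-1}=c^{-1}A^{-1}$) turns it into $((a,b)^n)^{-1}=(((a,b)^{-1})^n)^{\st}$, and this passage is reversible.

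I do not expect a genuine obstacle: the substance lies in the preceding inclusion chain and in Proposition~\ref{p:rootclsdchar}, so the theorem is essentially a synthesis of facts already in hand. The only points requiring mild care are the clean separation of the universally quantified biconditional into the two independent conditions ($n$-root closedness versus the star-equality) and the justification that scaling by a principal ideal commutes with both inversion and the star closure; both are routine consequences of the star-operation axioms.
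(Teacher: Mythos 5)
Your proposal is correct and follows essentially the same route as the paper: the paper's proof is exactly the chain $(1,u^n)^{-1} \supseteq ((1,u)^n)^{-1} \supseteq (((1,u)^{-1})^n)^{\st}$ (stated just before the theorem) combined with Proposition~\ref{p:rootclsdchar}, with the only cosmetic difference that the paper cites Proposition~\ref{p:rootclsd} for $n$-root closedness in the converse direction rather than your quantifier-splitting argument, and writes the equivalences as explicit equality chains through $(D:_D Du^n)$. Your verification of the second inclusion and the scaling argument $(a,b)=a(1,u)$ for the parenthetical are both sound.
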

\begin{proof} If $D$ is $n$-root closed and satisfies the given equality, then $(D:_D Du^n)=(1,u^n)^{-1}=((1,u)^n)^{-1}=(((1,u)^{-1})^n)^{\st}=((D:_D Du)^n)^{\st}$.  Conversely, if $D$ satisfies $\st_n$, then $D$ is $n$-root closed, and $((1,u)^n)^{-1}=(1,u^n)^{-1}=(D:_D Du^n)=((D:_D Du)^n)^{\st}=(((1,u)^{-1})^n)^{\st}$.
\end{proof}

Let $\st$ be a star operation on $D$, and let $A$ be a nonzero fractional ideal of $D$. Then $A$ is said to be \emph{$\st$-invertible} if $(AA^{-1})^{\st}=D$.  It is well-known (and easy to show) that if $A$ is $\st$-invertible and $(AB)^{\st}=D$ for some fractional ideal $B$, then we must have $B^{\st}=A^{-1}$; furthermore, if $n$ is a positive integer and we apply this fact to the equation $(A^n(A^{-1})^n)^{\st}=D$, we also have $(A^n)^{-1}=((A^{-1})^n)^{\st}$.  We use this equality in the following theorem.  (Note that while the equality holds for $\st$-invertible ideals, it fails in general as we point out in Example~\ref{e:nonkrull} below.)

\begin{theorem} \label{t:*-inv} Let $\st$ be a star operation on $D$, let $u \in K \setminus (0)$, and assume that the fractional ideal $(1,u)$ is $\st$-invertible.  Then $u$ satisfies $\st_n$ for each $n \ge 1$. {\rm (}Equivalently, if $a,b \in D \setminus (0)$ are such that $(a,b)$ is $\st$-invertible, then the pair $a,b$ satisfies $\st_n$ for each $n \ge 1$.{\rm )}
\end{theorem}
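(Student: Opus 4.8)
The plan is to reduce the claim, via the observation preceding Theorem~\ref{t:pcdchar}, to a single ideal-theoretic equality. Recall that for every $u \in K$ one has the chain $(1,u^n)^{-1} \supseteq ((1,u)^n)^{-1} \supseteq (((1,u)^{-1})^n)^{\st}$, and $u$ satisfies $\st_n$ precisely when both inclusions are equalities. The second inclusion is automatically an equality in our situation: since $(1,u)$ is $\st$-invertible, the remark immediately preceding this theorem (applied to $A=(1,u)$) yields $((1,u)^n)^{-1}=(((1,u)^{-1})^n)^{\st}$. Thus everything comes down to establishing the first equality, $(1,u^n)^{-1}=((1,u)^n)^{-1}$, under the hypothesis that $(1,u)$ is $\st$-invertible.

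The key step, and the one I expect to be the crux, is the elementary monomial identity
\[ (1,u)^{n-1}(1,u^n)=(1,u)^{2n-1}, \]
which I would verify by comparing generators: the left-hand product is generated by $\{u^i : 0\le i\le n-1\}\cup\{u^{i+n} : 0\le i\le n-1\}=\{u^j : 0\le j\le 2n-1\}$, which generates $(1,u)^{2n-1}$. This identity is exactly what allows $(1,u^n)$ to be compared with $(1,u)^n$. The naive attempt to prove $(1,u^n)^{-1}=((1,u)^n)^{-1}$ head-on runs into the requirement that $x,xu^n\in D$ force $xu^k\in D$ for all $0\le k\le n$; since $(xu^k)^n=x^{n-k}(xu^n)^k\in D$, this amounts to an $n$-root-closedness property of $D$ at the various elements $xu^k$, which the hypothesis on the single ideal $(1,u)$ does not obviously supply. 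The identity above sidesteps this difficulty entirely.

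With the identity in hand I would apply $\st$ to both sides and cancel. Because $(1,u)$ is $\st$-invertible, so is its power $(1,u)^{n-1}$ (a product of $\st$-invertible ideals is again $\st$-invertible), and $\st$-invertible ideals cancel, in the sense that $(CA)^{\st}=(CB)^{\st}$ forces $A^{\st}=B^{\st}$. Writing $(1,u)^{2n-1}=(1,u)^{n-1}(1,u)^n$ and cancelling the $\st$-invertible factor $(1,u)^{n-1}$ gives $(1,u^n)^{\st}=((1,u)^n)^{\st}$. Passing to inverses then produces $(1,u^n)^{-1}=((1,u)^n)^{-1}$, since the inverse of a fractional ideal depends only on its $\st$-closure: from $A\subseteq A^{\st}$ and $(xA)^{\st}=xA^{\st}$ one checks that $(D:A)=(D:A^{\st})$, i.e.\ $A^{-1}=(A^{\st})^{-1}$.

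Combining the two equalities yields $(1,u^n)^{-1}=((1,u)^n)^{-1}=(((1,u)^{-1})^n)^{\st}$, which is precisely the statement that $u$ satisfies $\st_n$ (Definition~\ref{d:subn}); as $n\ge 1$ was arbitrary (the case $n=1$ being trivial), $u$ satisfies $\st_n$ for every $n$. The parenthetical formulation for $a,b\in D\setminus(0)$ with $(a,b)$ $\st$-invertible then follows at once from Proposition~\ref{p:pcdequiv} applied to $u=b/a$.
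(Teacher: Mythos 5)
Your proof is correct and takes essentially the same route as the paper's: you use the monomial identity $(1,u)^{n-1}(1,u^n)=(1,u)^{2n-1}$ and cancel the $\st$-invertible factor $(1,u)^{n-1}$, whereas the paper starts from $(1,u)^{2n}=(1,u)^n(1,u^n)$ and multiplies by $((1,u)^{-1})^n$ before applying $\st$ --- which is exactly the same cancellation, up to a shift of exponent. The remaining steps (deducing $(1,u^n)^{\st}=((1,u)^n)^{\st}$, passing to inverses via $A^{-1}=(A^{\st})^{-1}$, and invoking the pre-theorem equality $(A^n)^{-1}=((A^{-1})^n)^{\st}$ for $\st$-invertible $A$) coincide with the paper's argument.
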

\begin{proof} Begin with the equality $(1,u)^{2n}=(1,u)^n(1,u^n)$.  Multiplying by $((1,u)^{-1})^n$ and taking $\st$'s yields $((1,u)^n)^{\st}=(1,u^n)^{\st}$.  Taking inverses then yields $((1,u)^n)^{-1}=(1,u^n)^{-1}$, and combining this with the above-mentioned equality (with $A=(1,u)$), we have $(1,u^n)^{-1}=(((1,u)^{-1})^n)^{\st}$.  This latter equation is equivalent to ``$u$ satisfies $\st_n$.''
\end{proof}

We have the following corollary to (the proof of) Theorem~\ref{t:*-inv}.

\begin{corollary} \label{c:*-inv} Let $\st$ be a star operation on $D$, let $u \in K \setminus (0)$, and assume that the fractional ideal $(1,u)$ is $\st$-invertible.  Then $(1,u^n)^{\st}=((1,u)^n)^{\st}$.  {\rm (} Equivalently, if $a,b \in D \setminus (0)$ are such that $(a,b)$ is $\st$-invertible, then $(a^n,b^n)^{\st}=((a,b)^n)^{\st}${\rm )}.
\end{corollary}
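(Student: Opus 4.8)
The plan is to read this off the proof of Theorem~\ref{t:*-inv}, where the identity $((1,u)^n)^{\st}=(1,u^n)^{\st}$ already appears as the intermediate step; the corollary simply records this equation as a statement in its own right. So the real content is the short chain of manipulations in that proof, which I would reproduce as follows.

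First I would record the monomial identity $(1,u)^{2n}=(1,u)^n(1,u^n)$, which holds as an identity of fractional ideals with no star operation involved: writing $(1,u)^m=D+Du+\cdots+Du^m$, one checks that both sides are generated by $1,u,\dots,u^{2n}$. Next I would multiply this identity through by $((1,u)^{-1})^n$ and apply $\st$ to both sides. On the left I would regroup $(1,u)^{2n}((1,u)^{-1})^n=(1,u)^n\cdot(1,u)^n((1,u)^{-1})^n$, and on the right $(1,u)^n(1,u^n)((1,u)^{-1})^n=(1,u^n)\cdot(1,u)^n((1,u)^{-1})^n$.

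The key step, and the only place where $\st$-invertibility of $(1,u)$ enters, is to collapse the factor $(1,u)^n((1,u)^{-1})^n$ to $D$ inside the closure. Here I would invoke the fact recalled in the paragraph just before Theorem~\ref{t:*-inv}: for a $\st$-invertible ideal $A=(1,u)$ one has $(A^n(A^{-1})^n)^{\st}=D$. Combining this with the standard star-operation identity $(IJ)^{\st}=(IJ^{\st})^{\st}$, the left side becomes $((1,u)^n\cdot D)^{\st}=((1,u)^n)^{\st}$ and the right side becomes $((1,u^n)\cdot D)^{\st}=(1,u^n)^{\st}$, which yields $((1,u)^n)^{\st}=(1,u^n)^{\st}$, the assertion of the corollary.

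Finally, for the equivalent formulation in terms of $a,b$, I would set $u=b/a$, so that $(1,u)=\tfrac1a(a,b)$ and $(1,u^n)=\tfrac1{a^n}(a^n,b^n)$; since multiplication by a nonzero principal fractional ideal preserves $\st$-invertibility and commutes with $\st$ (property~(1) of a star operation), the $a,b$ version follows by clearing the denominator $a^n$ on both sides. I expect no genuine obstacle: the whole argument is essentially the first two sentences of the proof of Theorem~\ref{t:*-inv}, and the only subtlety worth stating carefully is the justification for replacing $(1,u)^n((1,u)^{-1})^n$ by $D$ under the star operation.
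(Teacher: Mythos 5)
Your proposal is correct and takes essentially the same route as the paper: the paper derives the corollary directly from the first two sentences of the proof of Theorem~\ref{t:*-inv} (the identity $(1,u)^{2n}=(1,u)^n(1,u^n)$, multiplication by $((1,u)^{-1})^n$, and collapsing $\bigl((1,u)^n((1,u)^{-1})^n\bigr)^{\st}$ to $D$ via $\st$-invertibility), which is exactly the chain you reproduce. The extra details you supply --- checking the monomial identity on generators, citing $(IJ)^{\st}=(IJ^{\st})^{\st}$, and the scaling $u=b/a$ for the $(a,b)$ formulation --- are just the routine steps the paper leaves implicit.
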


If each nonzero finitely generated ideal of $D$ is $\st$-invertible, then $D$ is said to be a \emph{$\st$-Pr\"ufer domain} \cite{aafz}.  (Thus a $v$-Pr\"ufer domain is a $v$-domain.)    
Thus the next result is immediate from Theorem~\ref{t:*-inv}.

\begin{corollary} \label{c:starprufer}  A $\st$-Pr\"ufer domain is a $\st$-PCD. In particular, a $v$-domain is a $v$-PCD. \qed
\end{corollary}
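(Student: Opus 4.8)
The plan is to derive the result directly from Theorem~\ref{t:*-inv} by unwinding definitions, since no genuinely new computation is needed. Recall that $D$ is a $\st$-Pr\"ufer domain precisely when every nonzero finitely generated ideal $A$ is $\st$-invertible, i.e., $(AA^{-1})^{\st}=D$. The first step is therefore to observe that for an arbitrary pair $a,b\in D\setminus(0)$, the two-generated ideal $(a,b)$ is finitely generated and hence $\st$-invertible by hypothesis.

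Next I would invoke the equivalent (second) formulation of Theorem~\ref{t:*-inv}: since $(a,b)$ is $\st$-invertible, the pair $a,b$ satisfies $\st_n$ for every $n\ge 1$. Because $a,b$ were an arbitrary pair of nonzero elements, this says exactly that $D$ satisfies $\st_n$ for each $n\ge 1$, which is the defining condition (Definition~\ref{d:weak}(3)) for $D$ to be a $\st$-PCD. This settles the first assertion.

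For the ``in particular'' clause, I would appeal to the remark immediately preceding the corollary, namely that a $v$-Pr\"ufer domain is the same thing as a $v$-domain. Applying the first assertion with $\st=v$ then shows that every $v$-domain is a $v$-PCD.

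There is essentially no obstacle here: all of the substantive work has already been carried out in Theorem~\ref{t:*-inv}. The only point requiring a moment's care is to use the $a,b$ version of that theorem rather than the $(1,u)$ version, so that the $\st$-invertibility hypothesis is attached to the genuine two-generated ideal $(a,b)$; the ``equivalently'' form of the theorem supplies precisely this, and the corollary then follows in a single line.
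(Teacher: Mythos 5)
Your proof is correct and takes essentially the same route as the paper, which offers no separate proof but declares the corollary ``immediate from Theorem~\ref{t:*-inv}'' together with the parenthetical remark that a $v$-Pr\"ufer domain is a $v$-domain; your write-up simply makes that one-line deduction explicit. In particular, your care in applying the $(a,b)$ form of Theorem~\ref{t:*-inv} to the two-generated ideal $(a,b)$, and then quoting Definition~\ref{d:weak}(3), fills in exactly the unwinding the paper leaves to the reader.
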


It is well known that essential domains are $v$-domains; hence Corollary~\ref{c:starprufer} strengthens Corollary~\ref{c:essential}. Since a $v$-PCD need not be integrally closed (see Example~\ref{e:pvdstuff} below), the converse of Corollary~\ref{c:starprufer} is false.

Recall \cite[Section 32]{g} that to any star operation $\st$ on $D$, we may associate a star operation $\st_f$ given by $A^{\st_f}=\bigcup B^{\st}$, where the union is taken over all nonzero finitely generated subideals $B$ of $A$.  If $\st$ is a star operation on $D$ and $\st=\st_f$ (i.e., $\st$ is of \emph{finite type}), it is well known that (1) each nonzero element $a$ of $D$ is contained in a maximal $\st$-ideal, (2) $D=\bigcap D_P$, where the intersection is taken over all maximal $\st$-ideals $P$ of $D$, and (3) primes minimal over a nonzero element are $\st$-ideals.  When $\st=v$, $\st_f$ is the well-studied $t$-operation.  Finally, recall that a \emph{Pr\"ufer $\st$-multiplication domain} (P$\st$MD) is a domain in which each nonzero finitely generated ideal is $\st_f$-invertible.  Put another way, a P$\st$MD is a $\st$-Pr\"ufer domain $D$ in which $A^{-1}$ is a finite-type $\st$-ideal for each nonzero finitely generated ideal $A$ of $D$. (A $\st$-ideal $I$ has \emph{finite type} if $I=J^{\st}$ for some finitely generated subideal $J$ of $I$.)

\begin{corollary} \label{c:pvmd} A P$\st$MD is a $\st$-PCD.  In particular, P$v$MDs are $v$-PCDs. \qed
\end{corollary}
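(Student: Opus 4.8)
The plan is to deduce this directly from Corollary~\ref{c:starprufer} by showing that every P$\st$MD is already a $\st$-Pr\"ufer domain, so that no new work beyond a comparison of star operations is needed. The key observation is that $\st_f \le \st$: for any nonzero fractional ideal $A$, each finitely generated subideal $B \subseteq A$ satisfies $B^{\st} \subseteq A^{\st}$, and hence $A^{\st_f}=\bigcup B^{\st} \subseteq A^{\st}$. Consequently $\st_f$-invertibility is a (formally) stronger condition than $\st$-invertibility, and it is this implication that drives the whole argument.

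First I would fix a nonzero finitely generated ideal $A$ of a P$\st$MD $D$. By definition $A$ is $\st_f$-invertible, so $(AA^{-1})^{\st_f}=D$. Applying the inclusion $A^{\st_f} \subseteq A^{\st}$ to the fractional ideal $AA^{-1}$ gives $D=(AA^{-1})^{\st_f} \subseteq (AA^{-1})^{\st} \subseteq D$, whence $(AA^{-1})^{\st}=D$; that is, $A$ is $\st$-invertible. Since $A$ was arbitrary, $D$ is a $\st$-Pr\"ufer domain, and Corollary~\ref{c:starprufer} then yields that $D$ is a $\st$-PCD. This simultaneously recovers the remark preceding the corollary that a P$\st$MD is in particular a $\st$-Pr\"ufer domain.

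For the ``in particular'' statement I would specialize to $\st=v$, so that $\st_f=t$. A P$v$MD has each nonzero finitely generated ideal $t$-invertible, hence $v$-invertible by the argument above (using $t \le v$); thus it is a $v$-Pr\"ufer domain, i.e., a $v$-domain, and Corollary~\ref{c:starprufer} gives that it is a $v$-PCD. The only point requiring any care is the direction of the inequality $\st_f \le \st$ and the resulting passage from $\st_f$-invertibility to $\st$-invertibility; once this is recorded the corollary is immediate, and I expect no genuine obstacle.
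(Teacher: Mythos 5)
Your proof is correct and takes essentially the same route as the paper: the paper states the corollary as immediate from Corollary~\ref{c:starprufer}, having just noted that a P$\st$MD is in particular a $\st$-Pr\"ufer domain, and you simply make explicit the routine verification (via $\st_f \le \st$, so $\st_f$-invertibility implies $\st$-invertibility) that the paper leaves unstated.
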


A domain $D$ is called an \emph{almost GCD-domain} (AGCD-domain) if for each pair $a,b \in D \setminus (0)$ there is a positive integer $n$ for which $Da^n \cap Db^n$ is principal \cite{zagcd}.  We end this section by showing that within this class of domains, a $v$-PCD must be essential.

\begin{proposition} \label{p:agcd} For an AGCD domain $D$ the following are equivalent. \begin{enumerate}
\item $D$ is a P$v$MD.
\item $D$ is essential.
\item $D$ is a $v$-domain.
\item $D$ is a $v$-PCD.
\item $D$ is root closed.
\item $D$ is integrally closed.
\end{enumerate}
\end{proposition}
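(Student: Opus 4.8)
The plan is to prove the six-way equivalence by establishing a cycle of implications, using the already-proved results to collapse most of the work. The key structural observation is that Corollary~\ref{c:pvmd} gives $(1) \Rightarrow (4)$ immediately (a P$v$MD is a $v$-PCD), Corollary~\ref{c:starprufer} packages the $v$-domain/$v$-PCD relationship, and Proposition~\ref{p:rootclsd}(3) gives $(4) \Rightarrow (5)$ (a $v$-PCD is root closed). The implications $(3) \Rightarrow (6) \Rightarrow (5)$ are standard and cheap: a $v$-domain is always integrally closed, and integrally closed trivially implies root closed. So the genuine content is concentrated in closing the loop back from the weaker-looking conditions $(5)$ (root closed) and $(2)$ (essential) to the strongest, $(1)$ (P$v$MD); here is exactly where the AGCD hypothesis must be exploited.

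First I would fix the cheap arrows. For the chain, I would aim to prove $(1) \Rightarrow (2) \Rightarrow (3) \Rightarrow (4) \Rightarrow (5) \Rightarrow (1)$, with $(3) \Leftrightarrow (6)$ handled as a side remark since $v$-domains are integrally closed and integrally closed domains are root closed (giving $(6) \Rightarrow (5)$, and $(5) \Rightarrow (1) \Rightarrow (3) \Rightarrow (6)$ then ties $(6)$ into the cycle). The implication $(1) \Rightarrow (2)$ follows because P$v$MDs are known to be essential (indeed $D = \bigcap D_P$ over maximal $t$-ideals, with each $D_P$ a valuation domain in a P$v$MD). The arrow $(2) \Rightarrow (3)$ is the standard fact that essential domains are $v$-domains, noted just before Corollary~\ref{c:pvmd}. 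Then $(3) \Rightarrow (4)$ is Corollary~\ref{c:starprufer} and $(4) \Rightarrow (5)$ is Proposition~\ref{p:rootclsd}(3).

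The crux is $(5) \Rightarrow (1)$: a root-closed AGCD-domain is a P$v$MD. This is the step where I expect the real obstacle. The idea is to use the AGCD hypothesis: given nonzero $a,b$, there is $n$ with $Da^n \cap Db^n = Dc$ principal. Combined with root-closedness, I want to upgrade this to $t$-invertibility of $(a,b)$. The mechanism should run through Theorem~\ref{t:pcdchar} and Proposition~\ref{p:rootclsdchar}: root-closedness gives $((a,b)^n)^{-1} = ((a^n,b^n))^{-1}$ for the relevant $n$, while the AGCD condition forces $(a^n, b^n)$ to be (equivalent up to a principal ideal to) a divisorial ideal with principal inverse-of-inverse, making $(a^n,b^n)$ $t$-invertible, and then descending from the power to $(a,b)$ itself. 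The delicate point is that AGCD only gives one good exponent $n$ per pair, so I must show that $t$-invertibility of $(a^n,b^n)$ propagates back to $(a,b)$ — this uses that $(a,b)$ divides a power relationship and that in a root-closed domain the $n$-th power map on two-generated ideals is well-behaved under the $v$-operation. The anticipated difficulty is precisely this descent from the $n$-th power to the ideal itself, and verifying that the principal intersection $Dc$ coerces $((a,b)^{-1})$ into a finite-type $v$-ideal so that $(a,b)$ is genuinely $t$-invertible (not merely $v$-invertible), which is what P$v$MD demands.

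Assembling these, the proof is a short cycle once $(5) \Rightarrow (1)$ is secured, and I would write it so that the reader sees the AGCD hypothesis is used only in that one implication, with everything else being an application of the general $\st$-PCD theory developed in this section. I would present the easy implications in a single sentence each and devote the bulk of the argument to the root-closed-to-P$v$MD step, being careful that each passage between $(a,b)$ and $(a^n,b^n)$ respects the finite-type requirement built into the P$v$MD definition.
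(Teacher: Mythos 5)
Your skeleton for the easy arrows coincides with the paper's: the paper proves the single cycle $(1) \Rightarrow (2) \Rightarrow (3) \Rightarrow (4) \Rightarrow (5) \Rightarrow (6) \Rightarrow (1)$, with $(1) \Rightarrow (2) \Rightarrow (3)$ declared well known, $(3) \Rightarrow (4)$ by Corollary~\ref{c:starprufer}, and $(4) \Rightarrow (5)$ by Proposition~\ref{p:rootclsd}, exactly as you set things up. The divergence is at the crux, and there your proposal has a genuine gap. The paper proves nothing new at that point: it closes the cycle by citation, taking $(5) \Rightarrow (6)$ from \cite[Theorem 3.1]{zagcd} (a root closed AGCD-domain is integrally closed) and $(6) \Rightarrow (1)$ from \cite[Corollary 3.8]{zagcd} (an integrally closed AGCD-domain is a P$v$MD). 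You instead attempt a direct proof of $(5) \Rightarrow (1)$, but the step you yourself flag as delicate --- descending from $t$-invertibility of $(a^n,b^n)$ to $t$-invertibility of $(a,b)$ --- is only described (``the $n$-th power map \dots is well-behaved under the $v$-operation''), never argued. A description of where the difficulty lies is not a proof of the implication, and that implication is where all the mathematical content of the proposition sits; everything else is one-line citations in both your account and the paper's.

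For the record, your sketch is completable, but it needs two things you do not supply. First, the descent: AGCD gives $Da^n \cap Db^n = Dc$, so $(a^n,b^n)^{-1} = \frac{c}{a^nb^n}D$ is principal; root closedness with Proposition~\ref{p:rootclsdchar} gives $((a,b)^n)^{-1} = (a^n,b^n)^{-1}$, so with $A=(a,b)$ we get $(A^n)_v = Dd$ principal, hence $(A^n)^{-1} = d^{-1}D$. Since $x \in A^{n-1}(A^n)^{-1}$ implies $xA \subseteq A^n(A^n)^{-1} \subseteq D$, we have $A^{n-1}(A^n)^{-1} \subseteq A^{-1}$, whence $AA^{-1} \supseteq A^n(A^n)^{-1} = d^{-1}A^n$, a finitely generated ideal with $(d^{-1}A^n)_v = D$; thus $(AA^{-1})_t = D$ and $A$ is $t$-invertible. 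Second --- and this you never mention --- the P$v$MD property demands $t$-invertibility of \emph{every} nonzero finitely generated ideal, while your argument only reaches two-generated ideals. The reduction ``every two-generated ideal $t$-invertible $\Rightarrow$ P$v$MD'' is a known theorem of Mott--Zafrullah type, but it appears nowhere in this paper and is not automatic, so your proof of $(5) \Rightarrow (1)$ is incomplete without explicitly invoking it. In short: your route would, if finished, reprove the cited results of \cite{zagcd} from the machinery of Section~1, which is a legitimate and arguably more self-contained alternative; but as written it stops exactly at the two points where work is required.
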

\begin{proof} Implications (1) $\ra$ (2) $\ra$ (3) are well known.  For the rest, (3) $\ra$ (4) by Corollary~\ref{c:starprufer}, (4) $\ra$ (5) by Proposition~\ref{p:rootclsd}, (5) $\ra$ (6) by \cite[Theorem 3.1]{zagcd}, and (6) $\ra$ (1) by \cite[Corollary 3.8]{zagcd}.
\end{proof}


\section{Pullbacks and examples} \label{s:examples}

Let $T$ be a domain, $M$ a maximal ideal of $T$, $\varphi: T \to k := T/M $ the natural projection, and $D$ a proper subring of $k$.  Then let $R = \varphi^{-1}(D)$, that is, let $R$ be the domain arising from the following pullback of canonical homomorphisms.

  $$  \begin{CD}
        R   @>>>    D\\
        @VVV        @VVV    \\
        T  @>\varphi>>   T/M = k\\
\end{CD}$$

Since $R$ and $T$ share a nonzero ideal, they have a common quotient field, which, throughout this section, will be denoted by $K$.

\begin{lemma} \label{l:field-pullback} In the diagram above, assume that $D$ is a field and that $T=(M:M)$. Let $n >1$.  Then: \begin{enumerate}
\item $R$ satisfies $d_n$ if and only if $T$ satisfies $d_n$, $M=M^2$, and $D$ is $n$-root closed in $k$.
\item  Suppose that $T$ satisfies $v_n$ locally, $(T_M:MT_M)=T_M$, $D$ is $n$-root closed in $k$, and for each nonzero $u \in K$, if $(T_M:_{T_M} T_Mu^n)$ is principal in $T_M$, then $(T_M:_{T_M} T_Mu)$ is also principal. Then $R$ satisfies $v_n$.
\end{enumerate}
\end{lemma}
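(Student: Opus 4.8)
\textbf{The plan is to} reduce everything to a statement about a single element $u \in K \setminus (0)$ and then split into two cases according to whether $u$ ``comes from'' $T$ or is genuinely new. By Proposition~\ref{p:pcdequiv}, $R$ satisfies $v_n$ precisely when every nonzero $u \in K$ satisfies $v_n$ over $R$, i.e. $(R:_R Ru^n) = ((R:_R Ru)^n)^{v}$. Since $D=\bigcap_{P} R_P$ over the localizations of $R$ is not available directly (the $v$-operation does not localize freely), the cleaner route is to verify the hypotheses of Corollary~\ref{c:vcoh} or, more robustly, to exploit the fact that $R = T \cap (\text{something})$ when $D$ is a field. Concretely, because $D$ is a field, $M$ is a common ideal and $R_P = T_P$ for every prime $P$ of $R$ not containing $M$, while the behavior at primes lying over $M$ is controlled by the pullback. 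First I would record the two standard pullback facts in this ``$D$ a field, $T = (M:M)$'' setting: the maximal ideals of $R$ are $M$ together with the contractions of maximal ideals of $T$ avoiding $M$, and $R_M = T_M$ (this uses $(T_M : MT_M) = T_M$, which forces $M$ to behave like a maximal ideal of $R$ that is already ``as large as possible'').

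\textbf{Next I would} use Corollary~\ref{c:locvpcd}, which says that if $R_N$ satisfies $v_n$ for each maximal ideal $N$ of $R$, then $R$ satisfies $v_n$. This is the key structural reduction and is exactly why the hypotheses are phrased locally. For a maximal ideal $N$ of $R$ not lying over $M$, we have $R_N = T_{N'}$ for the corresponding prime of $T$, and this localization satisfies $v_n$ by the hypothesis that $T$ satisfies $v_n$ locally. The only remaining localization is $R_M = T_M$, and here I must show $T_M$ satisfies $v_n$. This is where the remaining three hypotheses enter: $(T_M : MT_M) = T_M$, the $n$-root closedness of $D$ in $k$, and the principal-conductor condition. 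The cleanest way to handle $R_M = T_M$ is to apply Theorem~\ref{t:pcdchar}: a domain satisfies $v_n$ iff it is $n$-root closed and $((1,u)^n)^{-1} = (((1,u)^{-1})^n)^v$ for all $u$.

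\textbf{The main obstacle} will be verifying the equation $((1,u)^n)^{-1} = (((1,u)^{-1})^n)^v$ in $T_M$ for an arbitrary nonzero $u \in K$, and in particular establishing $n$-root closedness of $T_M$. I expect this to hinge on the principal-conductor hypothesis: it is precisely designed so that when $u^n$ lies in $T_M$ (so the conductor $(T_M:_{T_M}T_Mu^n)$ is all of $T_M$, hence principal), one can conclude $(T_M:_{T_M}T_Mu)$ is principal, i.e. $u \in T_M$, giving $n$-root closedness. More generally, one wants to pass from the $v_n$ data on $T$ (available locally by hypothesis) to the single local ring $T_M$; here the subtlety is that $T_M$ need not appear as one of the ``local'' rings in which $T$ is assumed to satisfy $v_n$ unless one argues that $v_n$ at $T_M$ follows. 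I would handle this by noting that $T$ satisfies $v_n$ locally includes $T_M$ among its localizations (since $M$ is a maximal ideal of $T$), so $T_M$ already satisfies $v_n$ as a localization of $T$ — the genuinely new content is that the root closedness of $R$ at $M$ requires combining the $D$-in-$k$ root closedness with the conductor condition. The delicate bookkeeping is to show that an element $u$ with $u^n \in R$ but $u \notin R$ cannot exist: either $u \in T$, in which case $n$-root closedness of $D$ in $k = T/M$ blocks the residue from escaping $D$, or $u \notin T$, in which case applying part (1)-style reasoning at $T_M$ via the principal-conductor condition yields a contradiction. Assembling the $v_n$ equality at $R_M$ from the corresponding equality at $T_M$ together with these residue-field constraints is the crux, and I would expect it to require care in tracking how $(R:_R Ru)$ relates to $(T:_T Tu)$ when $u \in T$ but the ideals differ only inside $M$.
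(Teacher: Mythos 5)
Your overall scaffolding (maximal ideals of $R$ are $M$ together with contractions $N\cap R$ of maximal ideals $N\neq M$ of $T$; $R_{N\cap R}=T_N$ there; glue with Corollary~\ref{c:locvpcd}) matches the paper's reduction to the local case. But the proposal founders on a false structural claim: $R_M=T_M$. Since $D$ is a \emph{proper} subfield of $k$, localizing the pullback at $M$ gives $R_M=\varphi_M^{-1}(D)$ for $\varphi_M\colon T_M\to T_M/MT_M=k$, so $R_M\subsetneq T_M$ with $R_M/MR_M\cong D\subsetneq k$ (in the PVD case this is exactly $F+M\subsetneq k+M$). The hypothesis $(T_M:MT_M)=T_M$ does not force $R_M=T_M$; in the paper it is used to compute, over $R$ (local case), $(R:M)=T$ and hence $(R:M^2)=((R:M):M)=(T:M)=T$, giving $(M^2)_v=M$ --- which is precisely what handles elements $u\in T\setminus R$ via $(R:_R Ru^n)=M=(M^n)_v=((R:_R Ru)^n)_v$. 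With your claim, the lemma would follow trivially from ``$T$ satisfies $v_n$ locally,'' and the entire content of the statement --- establishing $v_n$ for the nontrivial pullback $R_M\subset T_M$ --- is skipped; you acknowledge this step as ``the crux'' at the end but never supply an argument for it.

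Two further concrete problems. First, your reading of the principal-conductor hypothesis is wrong: principality of $(T_M:_{T_M}T_Mu)$ does not imply $u\in T_M$ (take $u=x^{-1}$ for a nonunit $x$, where the conductor is $T_Mx$), and $n$-root closedness of $T_M$ needs no extra hypothesis --- it already follows from $T_M$ satisfying $v_n$ by Proposition~\ref{p:rootclsd}. The hypothesis is actually needed, in contrapositive form, in the case $u,u^{-1}\notin T_M$: the key observation is that a \emph{non-principal} fractional ideal $A$ of $T$ satisfies $(R:A)=(T:A)$, and since $(R:_R Ru)=(T:_T Tu)$ in this case, one must know that $(T:_T Tu)$ non-principal forces $(T:_T Tu^n)$ non-principal, so that the $R$-inverses of $(R:_R Ru^n)=(T:_T Tu^n)$ and of $(R:_R Ru)^n=(T:_T Tu)^n$ can both be computed over $T$ and matched, yielding $(R:_R Ru^n)=((R:_R Ru)^n)_v$; when $(T:_T Tu)=Tx$ is principal one instead uses $((Tx)^n)_{v_T}=Tx^n$ directly. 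Second, the lemma has two parts, and part (1) --- the biconditional for $d_n$, whose forward direction (extracting $M=M^2$, the $n$-root closedness of $D$ in $k$, and $d_n$ for $T$ from $d_n$ for $R$, via the claim that $(R:_R Ru)=(T:_T Tu)$ when $u,u^{-1}\notin T$) is substantial --- is not addressed at all in your proposal.
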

\begin{proof} (1) We begin by assuming that $T$ is local with maximal ideal $M$.   
We claim that if $u \in K$ is such that $u,u^{-1} \notin T$, then $(R:_R Ru)=(T:_T Tu)$.  To verify this, suppose that $t \in T$ satisfies $tu \in T$.  Then $t \in M \subseteq R$ since $u \notin T$, and $tu \in M \subseteq R$ since $t(tu)^{-1}=u^{-1} \notin T$.  The claim follows easily. Now assume that $R$ satisfies $d_n$.  It is clear that $M$ cannot be invertible in $R$ and also that $M^{-1} \ne R$.  Hence $M=M^2$ by Proposition~\ref{p:rootclsd}. Suppose that $t \in T$ satisfies $\varphi(t)^n \in D$. Then $t^n \in R$, whence $t \in R$ and then $\varphi(t) \in D$.  Hence $D$ is $n$-root closed in $k$.  We next show that $T$ is $n$-root closed.  For this, suppose that $u \in K \setminus T$ and $u^n \in T$.  We cannot have $u^n \in R$ since $R$ is $n$-root closed (Proposition~\ref{p:rootclsd}). Hence $M=(R:_R Ru^n)=(R:_R Ru)^n \subseteq M^n=M$.  It follows that $(R:_R Ru)=M$, whence $u \in M^{-1}=(M:M)=T$.  Hence $T$ is $n$-root closed.  Finally, let $y \in K \setminus T$.  Then $y^n \notin T$. If $y^{-1} \in T$, then $(T:_T Ty^n)=T(y^{-n})=(T:_T Ty)^n$.  If $y^{-1} \notin T$, then from the claim above, we have $(T:_T Ty^n)=(R:_R Ry^n)=(R:_R Ry)^n=(T:_T Ty)^n$.  Therefore, $T$ satisfies  $d_n$.

For the converse, assume that $T$ satisfies $d_n$ with $M=M^2$ and $D$ $n$-root closed in $k$, and let $u\in K \setminus R$.  First suppose that $u \in T$.  It is easy to see that $D$ $n$-root closed in $k$ implies that $R$ is $n$-root closed in $T$ and hence that $u^n \notin R$.  We then have  $(R:_R Ru^n)=M=M^n=(R:_R Ru)^n$.  Now suppose that $u \notin T$.  If $u^{-1} \in R$, then $(R:_R Ru^n)=Ru^{-n}=(R:_R Ru)^n$, as desired.  If $u^{-1} \notin R$, then $u^{-1} \notin T$ (lest $u^{-1} \in M \subseteq R$).  In this case (again using the claim above), we have $(R:_R Ru^n)=(T:_T Tu^n)=(T:_T Tu)^n=(R:_R Ru)^n$. Hence $R$ satisfies $d_n$.  This proves (1) in local case.

For the general case, note that each maximal ideal of $R$ is of the form $N \cap R$, where $N$ is a maximal ideal of $T$, and, for $N \ne M$, $R_{N \cap R}=T_N$ (see, e.g., \cite[Theorem 1.9]{gh}).  Localizing the diagram at $M$ yields that $R_M$ satisfies $d_n$ if and only if $T_M$ satisfies $d_n$, $M=M^2$, and $D$ is $n$-root closed in $k$.  The general case now follows easily from Corollary~\ref{c:loc}.

For (2), note that $(R:M^2)=((R:M):M)=(T:M)=T$, whence $(M^2)_v=T^{-1}=M$. Also, as above, $R$ is $n$-root closed in $T$. Now suppose that $T$ is local, and let $u \in K \setminus R$. If $u \in T$, then $u^n \in T \setminus R$ and $(R:_R Ru^n) = M=(M^n)_v= ((R:_R Ru)^n)_v$.  Suppose that $u \notin T$.  If $u^{-1} \in R$, proceed as  before.  Assume $u^{-1} \notin R$ and hence (see above) that $u^{-1} \notin T$.  Even so, it is possible that $(T:_T Tu)=Tx$ for some $x \in T$. In this case, we have $(R:_R Ru^n)=(T:_T Tu^n)=((T:_T Tu)^n)_{v_T}=(Tx)^n=(T:_T Tu)^n=(R:_R Ru)^n$.  Finally, suppose that $(T:_T Tu)$ is not principal. At this point, it is helpful to observe that if $A$ is a non-principal fractional ideal of $T$, then ($A$ is a fractional ideal of $R$ and) $A^{-1}=(M:A)=(T:A)$.  In particular, $(R:_R Ru)^{-1} =(T:(T:_T Tu))$.  By hypothesis, we have $(T:_T Tu^n)$ non-principal, and, as before, $(R:_R Ru^n)=(T:_T Tu^n)$.  By the observation, this yields $(R:_R Ru^n)^{-1}=(T:(T:_T Tu^n))=(T:(T:_T Tu)^n)$.  Then, since $(T:_T Tu)^n$ is not principal and is equal to $(R:_R Ru)^n$, we have $(R:_R Ru^n)^{-1}=((R:_R Ru)^n)^{-1}$, whence $(R:_R Ru^n)=(R:_R Ru^n)_v=((R:_R Ru)^n)_v$, as desired. This completes the proof of the local case.  An easy localization argument, together with Corollary~\ref{c:locvpcd}, then yields the general case.
\end{proof}

Consider the generic pullback diagram above, and assume that $k$ is the quotient field of $D$.  An easy calculation, or an appeal to \cite[Proposition 1.8]{fg}, yields the following facts: For $t \in T$, $(D:_D D\varphi(t)) = \varphi(R:_R Rt)$ and $\varphi^{-1}(D:_D D\varphi(t))=(R:_R Rt)$.  We use these in the next result.

\begin{theorem} \label{t:pullback}  In the diagram above, assume that $T=(M:M)$, and let $n>1$.  Then $R$ satisfies $d_n$ if and only if $T,D$ both satisfy $d_n$, $D$ is $n$-root closed in $k$, and at least one of the following holds: $M=M^2$ or $k$ is the quotient field of $D$.
\end{theorem}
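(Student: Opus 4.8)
The plan is to reduce, via Corollary~\ref{c:loc}, to the case in which $T$ is local with maximal ideal $M$, to settle that case by a direct computation of conductors in the spirit of Lemma~\ref{l:field-pullback}, and then to globalize. Write $F=\mathrm{Frac}(D)$ and recall the identity $(R:_R Rt)=\varphi^{-1}((D:_D D\varphi(t)))$ for $t\in T$, which follows by direct computation from $R=\varphi^{-1}(D)$ and the surjectivity of $\varphi|_R\colon R\to D$.

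Forward direction. Suppose $R$ satisfies $d_n$. To see that $D$ satisfies $d_n$, take $d\in F$, lift it to $u\in T$ with $\varphi(u)=d$, and apply $\varphi$ to the equality $(R:_R Ru^n)=(R:_R Ru)^n$; since $\varphi$ is a surjective ring homomorphism one has $\varphi((R:_R Ru)^n)=(\varphi(R:_R Ru))^n$, and the conductor identity turns this into $(D:_D Dd^n)=((D:_D Dd))^n$. That $D$ is $n$-root closed in $k$ is immediate: if $x\in k$ with $x^n\in D$, lift $x$ to $t\in T$, so $t^n\in\varphi^{-1}(D)=R$, whence $t\in R$ because $R$ is $n$-root closed (Proposition~\ref{p:rootclsd}), and thus $x=\varphi(t)\in D$. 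For the dichotomy, suppose $k\ne F$ and pick $d\in k\setminus F$; lifting to $u\in T$ gives $(D:_D Dd)=(0)$, hence $(R:_R Ru)=M$, and since $n$-root closedness of $D$ in $k$ forces that of $F$ in $k$ (clear denominators), also $d^n\notin F$ and $(R:_R Ru^n)=M$; then $M=(R:_R Ru^n)=(R:_R Ru)^n=M^n$ gives $M=M^2$. Finally, $T$ satisfies $d_n$ locally: for a maximal ideal $Q\ne M$ of $T$ one has $T_Q=R_{Q\cap R}$, a localization of $R$ (\cite[Theorem~1.9]{gh}), while for $T_M$ one localizes the square at $M$ (where $D$ becomes $F$ and $T_M$ is local) and transports the $d_n$ equality from $R_M$ to $T_M$ through the identity $(R_M:_{R_M}R_Mw)=(T_M:_{T_M}T_Mw)$, valid for $w,w^{-1}\notin T_M$ by locality.

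Reverse direction, local case. Assume $T$ is local (so every element of $T\setminus M$ is a unit) and that all four conditions hold; I check $(R:_R Ru^n)=(R:_R Ru)^n$ for each nonzero $u\in K$, immediately reducing to $u,u^{-1}\notin R$. Suppose first $u\in T$, and set $d=\varphi(u)$ and $A=(R:_R Ru)=\varphi^{-1}((D:_D Dd))\supseteq M$. If $d\in F$, then $D$ satisfying $d_n$ gives $(R:_R Ru^n)=\varphi^{-1}(((D:_D Dd))^n)=A^n+M$, and it remains to prove $M\subseteq A^n$: when $M=M^2$ this is $M=M^n\subseteq A^n$, and when $k=F$ one takes a unit $\sigma\in A\setminus M$ (a lift of a nonzero element of $(D:_D Dd)$) and uses $M=\sigma^{n-1}M\subseteq A^{n-1}A=A^n$. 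If instead $d\notin F$, then $k=F$ is impossible, so $M=M^2$; here $A=M$ and, $F$ being $n$-root closed in $k$, $d^n\notin F$, whence $(R:_R Ru^n)=M=M^n=(R:_R Ru)^n$. Finally, if $u\notin T$ then, as the relation in Proposition~\ref{p:pcdequiv} is symmetric in $u$ and $u^{-1}$, we may also assume $u^{-1}\notin T$; then locality yields $(R:_R Rw)=(T:_T Tw)$ for $w=u,u^n$ (note $u^{\pm n}\notin T$ since $T$ is $n$-root closed), and $T$ satisfying $d_n$ finishes the case.

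Globalization and the main obstacle. For the reverse direction in general, Corollary~\ref{c:loc} reduces matters to showing each $R_N$ satisfies $d_n$. When $N\not\supseteq M$, $R_N=T_{N'}$ is a localization of $T$ and inherits $d_n$; when $N=\varphi^{-1}(\mathfrak m)\supseteq M$ with $\mathfrak m$ maximal in $D$, the pullback localization theory (\cite[Theorem~1.9]{gh}) presents $R_N$ as the pullback of the \emph{local} ring $T_M$ and $D_{\mathfrak m}$ over $k$, to which the four hypotheses descend (the $d_n$ property and $n$-root closedness in $k$ localize, $\mathrm{Frac}(D_{\mathfrak m})=F$, and $M=M^2$ yields $MT_M=(MT_M)^2$), so the local case applies. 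I expect the crux to be the inclusion $M\subseteq A^n$ in the subcase $u\in T$, $\varphi(u)\in F\setminus D$: this can fail when $T$ is not local, which is exactly why the reduction to the local ring $T_M$ is forced and why the alternative $M=M^2$ must be available when $k\ne F$.
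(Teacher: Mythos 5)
Your reverse direction is correct, and it is arguably cleaner than the paper's: your single local computation with the disjunction built in replaces the paper's three-case structure (the field case via Lemma~\ref{l:field-pullback}, the case $k=\mathrm{Frac}(D)$ handled directly, and the case $\mathrm{Frac}(D)\subsetneq k$ routed through the intermediate pullback $S=\varphi^{-1}(\mathrm{Frac}(D))$). Moreover your $\sigma$-trick, lifting a nonzero element of $(D:_D D\varphi(u))$ to a unit of the local ring $T_M$ to get $M=\sigma^{n-1}M\subseteq A^n$, supplies exactly the justification of $\varphi^{-1}(J^n)=\bigl(\varphi^{-1}(J)\bigr)^n$ that the paper's displayed chain of equalities in the case $k=\mathrm{Frac}(D)$ passes over in silence; that is a genuine improvement in rigor.

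The forward direction, however, has a real gap, located at the claim that $T_M$ satisfies $d_n$. Your transport identity $(R_M:_{R_M}R_Mw)=(T_M:_{T_M}T_Mw)$ holds only for $w,w^{-1}\notin T_M$, so to apply it with $w=u^n$ you need $u^{\pm n}\notin T_M$; that is, you must first prove that $T_M$ is $n$-root closed, and nothing in your argument does so. If $u,u^{-1}\notin T_M$ but $u^n\in T_M$, then $(T_M:_{T_M}T_Mu^n)=T_M$ while $(T_M:_{T_M}T_Mu)^n\subseteq MT_M$, so nothing transports --- indeed this is precisely where the standing hypothesis $T=(M:M)$, which your proposal never invokes, is indispensable: in the proof of Lemma~\ref{l:field-pullback}(1) one derives $(R:_R Ru)=M$ and concludes $u\in M^{-1}=(M:M)=T$ (using $M=M^2$ to get $M^{-1}=(M:M)$). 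Without that hypothesis the implication ``$R$ satisfies $d_n$ $\Rightarrow$ $T$ satisfies $d_n$'' is simply false: take $V=k+M$ a rank-one nondiscrete valuation domain (so $M=M^2$) with $k=\overline{\mathbb{Q}(t)}$, and set $F=\overline{\mathbb{Q}}$, $L=\overline{\mathbb{Q}}(t)$, $T=L+M$, $R=F+M$. Then $R$ is a PVD satisfying $d_n$ by Corollary~\ref{c:pvd}(2) (as $F$ is root closed in $k$ and $M=M^2$), $D=F$ is $n$-root closed in $T/M=L$, yet $T$ fails $d_n$ by Proposition~\ref{p:rootclsd} since $t^{1/n}$ witnesses that $L$ is not $n$-root closed in $k$; here $(M:M)=V\ne T$, and a unit $u\in V$ with residue $t^{1/n}$ satisfies $u,u^{-1}\notin T$ but $u^n\in T$ --- exactly the case your transport does not cover. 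To repair the proof, note the issue only arises when $k\ne\mathrm{Frac}(D)$ (when $k=\mathrm{Frac}(D)$ your localized square gives $R_M=T_M$ and there is nothing to prove); there your dichotomy supplies $M=M^2$, and you must then run the root-closedness argument of Lemma~\ref{l:field-pullback} using $T=(M:M)$ --- either in its localized form $T_M=(MT_M:MT_M)$, or, as the paper does, globally via the intermediate pullback $S=\varphi^{-1}(\mathrm{Frac}(D))$: one first shows $S$ satisfies $d_n$ (every localization of $S$ at a maximal ideal is a localization of $R$) and then applies Lemma~\ref{l:field-pullback}(1) to $S\subseteq T$, where the hypothesis $T=(M:M)$ enters as stated.
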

\begin{proof} The case where $D$ is a field is handled by Lemma~\ref{l:field-pullback}.  Suppose that $D$ is not a field but that $k$ is the quotient field of $D$, and assume that $R$ satisfies $d_n$.  Then each localization of $T$ at a maximal ideal agrees with a localization of $R$, and hence $T$ satisfies $d_n$ by Corollary~\ref{c:loc}.  Now let $t \in T \setminus R$.  From the remarks above, we have $(D:_D D\varphi(t)^n)=\varphi(R:_R Rt^n)=\varphi((R:_R Rt)^n)=(D:_D D\varphi(t))^n$. Hence $D$ satisfies $d_n$.  Note that $D$ is automatically $n$-root closed in $k$ since it satisfies $d_n$.   For the converse, suppose that $P$ is a maximal ideal of $R$.  If $P$ is incomparable to $M$, then $P=N \cap R$ for some maximal ideal $N$ of $T$, and hence $R_P=T_N$ satisfies $d_n$.  If $P \supseteq M$, then, localization produces the following pullback diagram:  $$  \begin{CD}
        R_P   @>>>    D_{\varphi(P)}\\
        @VVV        @VVV    \\
        T_M  @>\varphi>>   T/M = k\\
\end{CD}$$

At this point, for the remainder of this part of the proof, we change notation and assume that $D$ and $T$ are local with maximal ideals $\varphi(P)$ and $M$, respectively.  Let $u \in K$.  If $u \in T$, then $(R:_R Ru^n)=\varphi^{-1}(D:_D D\varphi(u)^n)=\varphi^{-1}((D:_D \varphi(u))^n)=(R:_R Ru)^n$, as required.  If $u \notin T$ but $u^{-1} \in R$, then $(R:_R Ru^n)=Ru^{-n}=(R:_R Ru)^n$.  If $u^{-1} \notin R$, then $u^{-1} \notin T$, and it is easy to see that $(R:_R Ru^n)=(T:_T Tu^n)=(T:_T Tu)^n=(R:_R Ru)^n$.  Therefore, $R$ satisfies $d_n$.

Finally (and switching back to the original notation), assume that $D$ is not a field and that the quotient field $F$ of $D$ is properly contained in $k$. Let $S:=\varphi^{-1}(F)$.  If $R$ satisfies $d_n$, then by what was proved in the preceding paragraph, $S$ satisfies $d_n$  and (hence) $D$ is $n$-root closed in $F$.  Lemma~\ref{l:field-pullback} then yields that $T$ satisfies $d_n$, $M=M^2$, and that $F$ is $n$-root closed in $k$; it follows that $D$ is $n$-root closed in $k$.  For the converse, assume that $D$ and $T$ satisfy $d_n$, $D$ is $n$-root closed in $k$, and $M=M^2$.  To see that $F$ is $n$-root closed in $k$, let $x \in k$ with $x^n \in F$.  Write $x^n=d/e$ with $d,e \in D$.  Then $(ex)^n = e^{n-1}ex^n \in D$, whence $ex \in D$, and we have $x \in F$, as desired.  Lemma~\ref{l:field-pullback} then ensures that $S$ satisfies $d_n$, and then the preceding paragraph yields that $R$ satisfies $d_n$.
\end{proof}

Recall that a local domain $(R,M)$ is a \emph{pseudo-valuation domain} (PVD) if $M^{-1}$ is a valuation domain with maximal ideal $M$ \cite{hh}; $V$ is then called the canonical valuation overring of $R$.  It follows that a domain $R$ is a PVD if and only if it is a pullback of the type in Lemma~\ref{l:field-pullback} with $T$ a valuation domain \cite[Proposition 2.6]{ad}. We specialize Lemma~\ref{l:field-pullback} to PVDs:

\begin{corollary} \label{c:pvd} Let $(R,M)$ be a PVD with canonical valuation overring $
V=M^{-1}$, and assume that $R\subsetneq V$. Then: \begin{enumerate}
\item The following statements are equivalent. \begin{enumerate}
\item $R$ is a weak $v$-PCD.
\item $M=M^2$.
\item $R$ is a weak $d$-PCD.
\end{enumerate}
\item  If $n>1$, the following statements are equivalent. \begin{enumerate}
\item $R$ satisfies $v_n$.
\item $R$ is $n$-root closed {\rm (}equivalently, $R/M$ is $n$-root closed in $V/M${\rm )} and $M=M^{2}$.
\item $R$ satisfies $d_n$.
\end{enumerate}
\item The following statements are equivalent. \begin{enumerate}
\item $R$ is a $v$-PCD.
\item $R$ is root closed and $M=M^2$.
\item $R$ is a $d$-PCD.
\end{enumerate}
\end{enumerate}
\end{corollary}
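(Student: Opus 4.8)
The plan is to exploit the pullback description of a PVD: here $R=\varphi^{-1}(D)$ sits in the diagram of Lemma~\ref{l:field-pullback} with $T=V=(M:M)$ a valuation domain and $D=R/M$ a field embedded in $k=V/M$. Two facts do most of the work for free. First, $V$ is a valuation domain, hence a $d$-PCD, so $V$ satisfies $d_n$ for every $n$; this is exactly the hypothesis ``$T$ satisfies $d_n$'' of Lemma~\ref{l:field-pullback}(1) holding automatically. Second, since $d\le v$, Lemma~\ref{l:ge} gives (c)$\,\Rightarrow\,$(a) in all three parts at no cost (a $d$-PCD is a $v$-PCD, a domain satisfying $d_n$ satisfies $v_n$, and a weak $d$-PCD is a weak $v$-PCD). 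I would therefore prove each part by running a cycle (c)$\,\Rightarrow\,$(a)$\,\Rightarrow\,$(b)$\,\Rightarrow\,$(c), and reduce part (3) to part (2).

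For the forward step (a)$\,\Rightarrow\,$(b) I would feed the hypothesis into Proposition~\ref{p:rootclsd}. In part (2), ``$R$ satisfies $v_n$'' yields that $R$ is $n$-root closed and that $M$ falls into one of the three cases of Proposition~\ref{p:rootclsd}(2); in parts (1) and (3) the weak-/full-PCD hypothesis does the same, also giving root-closedness in part (3). Because $R\subsetneq V=M^{-1}$ we have $M^{-1}\ne R$, and $M$ is not invertible (as $MM^{-1}=MV=M\ne R$), so the only surviving alternative is $M=(M^2)_v$. The crux is to upgrade this to genuine idempotence $M=M^2$. I would argue inside $V$: if $M$ were principal, say $M=\pi V$, then $M^2=\pi^2V$, and a short conductor computation (using $(R:M)=V$ and $(R:V)=M$) gives $(M^2)_v=M^2\subsetneq M$, contradicting $M=(M^2)_v$; hence $M$ is a non-principal maximal ideal of a valuation domain and is therefore idempotent.

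For (b)$\,\Rightarrow\,$(c) the parts diverge. In part (2) I would invoke Lemma~\ref{l:field-pullback}(1): since $V$ automatically satisfies $d_n$, the lemma reads ``$R$ satisfies $d_n$ iff $M=M^2$ and $R/M$ is $n$-root closed in $V/M$.'' The hypothesis $M=M^2$ is given, and I would record the equivalence ``$R$ is $n$-root closed $\iff$ $R/M$ is $n$-root closed in $V/M$'' (the nontrivial direction uses that $V$, being a valuation domain, is integrally closed and hence $n$-root closed, so any $u\in K$ with $u^n\in R$ already lies in $V$ and its image is then handled in the residue field); this also justifies the parenthetical in (2)(b). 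In part (1) I would instead verify the weak $d$-PCD property element-by-element. By Proposition~\ref{p:pcdequiv}, ``$u$ satisfies $d_m$'' is symmetric in $u$ and $u^{-1}$, so I may assume $u\in V$. If $u\in R$ then $(R:_R Ru^n)=R=(R:_R Ru)^n$ for all $n$; if $u\in V\setminus R$, then $\varphi(u)\in k\setminus D$ and $(R:_R Ru)=M$, while $M=M^2$ forces $M^m=M$, so $u$ satisfies $d_m$ as soon as $\varphi(u)^m\notin D$. Since $D$ is a field and $\varphi(u)\notin D$, some $m>1$ works (otherwise $\varphi(u)=\varphi(u)^3/\varphi(u)^2\in D$). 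Thus $R$ is a weak $d$-PCD. Finally, part (3) follows by applying part (2) to every $n>1$ (the equality for $n=1$ being automatic): root-closedness gives $n$-root-closedness for all $n$, so (b) yields $d_n$ for every $n$, i.e.\ the $d$-PCD property.

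I expect the main obstacle to be the upgrade $(M^2)_v=M\Rightarrow M=M^2$, since this is the one place where the weak/divisorial hypotheses are genuinely converted into honest idempotence and where the valuation structure of $V$ must be used (the principal-versus-idempotent dichotomy for the maximal ideal of a valuation domain, together with the conductor computations $(R:M)=V$ and $(R:V)=M$). A secondary subtlety, specific to part (1), is locating an exponent $m>1$ with $\varphi(u)^m\notin D$ for $u\in V\setminus R$; this is precisely what makes the weak property hold with no root-closedness assumption, and it is the reason part (1) carries no root-closed condition while part (3) does.
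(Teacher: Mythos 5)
Your proposal is correct and takes essentially the same route as the paper's own proof: the reduction $M=(M^2)_v\Rightarrow M=M^2$ via the principal-versus-idempotent dichotomy for the maximal ideal of the valuation domain $V$ (with the conductor facts $(R:M)=V$, $(R:V)=M$), the exponent-$2$-or-$3$ trick for (b)$\Rightarrow$(c) in part (1), Lemma~\ref{l:field-pullback}(1) for part (2), Lemma~\ref{l:ge} for all the (c)$\Rightarrow$(a) steps, and part (3) deduced from part (2). The only cosmetic difference is that you invoke the trichotomy of Proposition~\ref{p:rootclsd}(2) (ruling out $M$ invertible and $M^{-1}=R$) where the paper inlines the same computation with a chosen $x\in V\setminus R$, and you run the $2$-or-$3$ argument in the residue field rather than in $R$ itself.
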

\begin{proof}  (1) Assume that $R$ is a weak $v$-PCD, and choose $x \in V \setminus R$.  Then $(R:_R Rx)=M$.  For some integer $k$, we must have $(R:_R Rx^k)= ((R:_R Rx)^k)_v$.  If $x^k \in R$, this equality becomes $R=(M^k)_v$, which is impossible since $M$ is divisorial in $R$.  Hence $x^k \notin R$, in which case the equality above becomes $M=(M^k)_v$.  This then yields $M=(M^2)_v$, whence $V=M^{-1}=(M^2)^{-1}=((R:M):M)=(V:M)$.  It follows that $M$ cannot be principal in the valuation domain $V$, whence $M=M^2$.  Thus (a) $\ra$ (b).  Now assume that $M=M^2$, and let $y \in K \setminus R$.  If $y^2, y^3 \in R$, then $(y^2)y=y^3 \in R$, whence $y^2 \in M$.  However, since $y \in V$, this puts $y \in M \subseteq R$, a contradiction.  Therefore, for $m=2$ or $m=3$, $y^m \notin R$, whence $(R:_R Ry^m)=M=M^m=(R:_R Ry)^m$, as desired.  Finally, suppose $y \notin V$.  Then $y^{-1} \in M \subseteq R$, whence for all $s > 1$, we have $(R:_R Ry^s)=Ry^{-s}=(R:_R Ry)^s$.  This gives (b) $\ra$ (c), and (c) $\ra$ (a) follows from Lemma~\ref{l:ge}.

(2) Assume that $R$ satisfies $v_n$.  Then $R$ is $n$-root closed by Proposition~\ref{p:rootclsd}, and $M=M^2$ by (1).  The implication (b) $\ra$ (c) follows from Lemma~\ref{l:field-pullback}(1), and (c) $\ra$ (a) is trivial (Lemma~\ref{l:ge}).

(3) This follows from (2).
\end{proof}

Next, we present examples, several of which were promised above.  We begin with PVD examples, where the conclusions are immediate from Corollary~\ref{c:pvd}.

\begin{example} \label{e:pvdstuff} Let $(R,M)$ be a PVD with canonical valuation overring $V$.  Then:
\begin{enumerate}
\item If $M\ne M^2$, then $R$ is not a weak $v$-PCD (Corollary~\ref{c:pvd}). For example, take $R=F+xk[x]_{xk[x]}$, where $F \subset k$ are fields and $x$ is an indeterminate; if, in addition, $[k:F]<\infty$, then $R$ is Noetherian.
\item If $M=M^2$, but $R$ is not $n$-root closed for any $n>1$, then $R$ is a weak $d$-PCD but does not satisfy $v_n$ for any $n>1$. (For example, let $k$ be an algebraic closure of $\mb Q$, let $V=k+M$ be a non-discrete rank-one valuation domain with maximal ideal $M$, and let $R=\mb Q+M$.)
\item If $M=M^2$ and $R$ is 2-root closed but not 3-root closed (e.g., take $V=\mb F_4+M$ to be a rank-one non-discrete valuation domain with maximal ideal $M$, and let $R=\mb F_2 +M$), then $R$ satisfies $d_2$ but is not a $v$-PCD.
\item If $M=M^2$ and $R$ is root closed, then $R$ is a $d$-PCD. \begin{enumerate}
\item If $R/M$ is not algebraically closed in $V/M$, then $R$ is not integrally closed.  (For example, take $R/M=\mb Q$ and $V/M=\mb Q[u]$, where $u$ is a root of $x^3-3x+1$.)  Hence a $d$-PCD need not be integrally closed and hence need not be an essential domain (or even a $v$-domain).
\item If $R/M$ is algebraically closed in $V/M$, then $R$ is a $d$-PCD that is integrally closed but not completely integrally closed.
\end{enumerate}
\end{enumerate}
\end{example}

Let $F \subset k$ be fields, $X$ a set of indeterminates, $M$ the maximal ideal of $k[X]$ generated by $X$, and put $R=F+Xk[X]$.  Such rings have often been used to provide interesting examples.  We investigate PCD-properties in these rings.

\begin{example} \label{e:poly} With the notation above, assume that $F$ root closed in $k$.  \begin{enumerate}
\item Let $|X|=1$. Then: \begin{enumerate}
\item $R$ is not a $d$-PCD by Lemma~\ref{l:field-pullback}.  In fact, observe that in this case $R_M$ is a PVD that is not a weak $v$-PCD by Example~\ref{e:pvdstuff}.  Then, since $R$ is $v$-coherent (see, e.g. \cite[Theorem 3.5]{gh}), Corollary~\ref{c:vcoh} ensures that $R$ is not even a weak $v$-PCD.
\item If $F$ is algebraically closed in $k$, then $R$ is an integrally closed domain that is not a weak $v$-PCD.
\end{enumerate}
\item If $|X|>1$, then $k[X]$ satisfies the hypotheses of Lemma~\ref{l:field-pullback}(2).  Hence $R$ is a $v$-PCD but not a weak $d$-PCD by Proposition~\ref{p:rootclsd}.
\item If in (2) we take $1<|X|<\infty$ and $[k:F]<\infty$, then $R$ is a Noetherian $v$-PCD that is not a weak $d$-PCD.
\end{enumerate}
\end{example}

Next, we give an example showing that the $v$-PCD property does not localize.

\begin{example} \label{e:nonloc} Let $R$ be the example given by Heinzer \cite{h}.  The domain $R$ is essential, and therefore a $v$-PCD, but contains a prime ideal $P$ such that $R_P$ is not essential.  In fact, it is easy to see that $R_P$ is a PVD with $P \ne P^2$.  Hence $R_P$ is not a (weak) $v$-PCD by Corollary~\ref{c:pvd}(1).
\end{example}


\section{Completely integrally closed $v$-PCDs} \label{s:cic}

In this and the next section we return to our convention that $D$ is a domain with quotient field $K$.  Recall that $D$ is \emph{completely integrally closed} if whenever $u \in K$ and $a$ is a nonzero element of $D$ with $au^n\in D$ for all $n \ge 1$, then $u \in D$. Thus the domain $D$ is completely integrally closed if and only if $\bigcap_{n=1}^{\infty} (D:_D Du^n)=(0)$ for each $u \in K \setminus D$. It is well-known that $D$ is completely integrally closed if and only if each nonzero ideal of $D$ is $v$-invertible.  We begin with a characterization of completely integrally closed $\st$-PCDs.

\begin{proposition} \label{p:weakcap} Let $D$ be a weak $\st$-PCD. Then \begin{enumerate}
\item $\bigcap_{n=1}^{\infty} (D:_D Du^{n})=\bigcap_{n=1}^{\infty} ((D:_D Du)^{n})^{\st }$ for each $u \in K \setminus (0)$.
\item $D$ is completely integrally closed if and only if $\bigcap_{n=1}^{\infty} ((D:_D Du)^n)^{\st} =(0)$ for each $u \in K \setminus D$.
\end{enumerate}
\end{proposition}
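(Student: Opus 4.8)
The plan is to prove part (1) first, since part (2) follows from it almost immediately. Fix $u \in K \setminus (0)$, write $A := (D :_D Du)$, and for each $n \ge 1$ set $I_n := (D :_D Du^n)$ and $J_n := (A^n)^{\st}$, so that the two sides of (1) are $\bigcap_n I_n$ and $\bigcap_n J_n$. First I would record the automatic inclusion $J_n \subseteq I_n$: a product $a_1 \cdots a_n$ of elements of $A$ satisfies $(a_1 \cdots a_n)u^n = \prod_i (a_i u) \in D$, so $A^n \subseteq I_n$, and since $I_n = D \cap u^{-n}D$ is divisorial and hence a $\st$-ideal, applying $\st$ gives $J_n = (A^n)^{\st} \subseteq I_n^{\st} = I_n$. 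This yields $\bigcap_n J_n \subseteq \bigcap_n I_n$ for free.

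The essential observation is that the sequence $(J_n)$ is \emph{decreasing}: because $A = (D :_D Du) \subseteq D$, we have $A^{n+1} = A\cdot A^n \subseteq A^n$, and applying $\st$ gives $J_{n+1} \subseteq J_n$. By contrast there is no reason for the conductors $I_n$ to be monotonic, and this is exactly the crux: I cannot pass to a convenient subsequence on the $I$-side and close up the intersection there. Instead I invoke the weak $\st$-PCD hypothesis through Proposition~\ref{p:weakchar}, which furnishes a strictly increasing (hence cofinal) sequence $1 < n_1 < n_2 < \cdots$ with $I_{n_i} = J_{n_i}$ for every $i$. Since $(J_n)$ is decreasing and $\{n_i\}$ is cofinal in the positive integers, $\bigcap_i J_{n_i} = \bigcap_n J_n$.

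Assembling these pieces gives the reverse inclusion for (1): $\bigcap_n I_n \subseteq \bigcap_i I_{n_i} = \bigcap_i J_{n_i} = \bigcap_n J_n$, where the first containment holds because $\{n_i\}$ is a subset of the full index set, the middle equality is $I_{n_i} = J_{n_i}$, and the last is the cofinality remark. Together with the free inclusion this proves part (1). (When $u \in D$ one has $A = D$, so both intersections are simply $D$ and nothing is needed.)

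For part (2), I would quote the standard criterion recalled just before the proposition: $D$ is completely integrally closed if and only if $\bigcap_{n=1}^{\infty}(D :_D Du^n) = (0)$ for every $u \in K \setminus D$. By part (1) the left-hand side equals $\bigcap_{n=1}^{\infty}((D :_D Du)^n)^{\st}$ for each such $u$, so the two vanishing conditions coincide, which is precisely the asserted equivalence; the restriction $u \in K \setminus D$ matches the statement of (2) verbatim, so no further work is required. I expect the decreasing-monotonicity of $(J_n)$ to be the one point that must not be overlooked, as it is exactly what lets the weak (subsequence) hypothesis deliver the full intersection.
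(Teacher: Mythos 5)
Your proof is correct and takes essentially the same approach as the paper: both invoke Proposition~\ref{p:weakchar} to get a cofinal sequence $n_1<n_2<\cdots$ with $(D:_D Du^{n_i})=((D:_D Du)^{n_i})^{\st}$ and then pass from $\bigcap_i ((D:_D Du)^{n_i})^{\st}$ to the full intersection, with (2) deduced from (1) via the standard criterion for complete integral closure. You merely make explicit two steps the paper compresses into ``(1) follows easily,'' namely the automatic inclusion $((D:_D Du)^n)^{\st}\subseteq (D:_D Du^n)$ and the decreasing monotonicity of $((D:_D Du)^n)^{\st}$ that justifies the cofinality argument.
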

\begin{proof} (1) Let $u \in K \setminus (0)$, and use Proposition~\ref{p:weakchar} to choose $1<n_{1}<n_{2} \cdots$ with $(D:_D Du^{n_{i}})=((D:_D Du)^{n_{i}})^{\st }$ for each $i$.  Then 
$$\bigcap_{n=1}^{\infty }(D:_D Du^{n}) \subseteq \bigcap_{i=1}^{\infty}(D:_D Du^{n_{i}})=\bigcap_{i=1}^{\infty }((D:_D Du)^{n_{i}})^{\st} =\bigcap_{n=1}^{\infty }((D:_D Du)^{n})^{\st},$$ 
and (1) follows easily.

(2) By definition $D$ is completely integrally closed if and only if $\bigcap_{n=1}^{\infty} (D:_D Du^n)=(0)$ for each $u \in K \setminus D$.  Hence the conclusion follows from (1).
\end{proof}

Combining the proposition with Corollary~\ref{c:starprufer}, we have:

\begin{corollary} \label{c:starprufercic} A $\st$-Pr\"ufer domain $D$ is completely integrally closed if and only if $\bigcap_{n=1}^{\infty} ((D:_D Du)^n)^{\st} =(0)$ for each $u \in K \setminus D$. In particular, a $v$-domain (and hence an essential domain or a P$v$MD) is completely integrally closed if and only if $\bigcap_{n=1}^{\infty} ((D:_D Du)^n)_v =(0)$ for each $u \in K \setminus D$. \qed
\end{corollary}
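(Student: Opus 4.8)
The plan is to deduce this directly from the two results immediately preceding it, so the proof should be essentially a one-line chaining of prior work. First I would observe that a $\st$-Pr\"ufer domain is, by Corollary~\ref{c:starprufer}, a $\st$-PCD, and hence in particular a weak $\st$-PCD. This places $D$ squarely inside the hypothesis of Proposition~\ref{p:weakcap}.

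Next I would simply invoke part (2) of that proposition, which asserts that for any weak $\st$-PCD, $D$ is completely integrally closed if and only if $\bigcap_{n=1}^{\infty} ((D:_D Du)^n)^{\st} = (0)$ for each $u \in K \setminus D$. Since a $\st$-Pr\"ufer domain is a weak $\st$-PCD, this equivalence applies verbatim and yields the first assertion. No fresh computation is required here; the substantive content was already isolated in the equality $\bigcap_n (D:_D Du^n) = \bigcap_n ((D:_D Du)^n)^{\st}$ proved in Proposition~\ref{p:weakcap}(1), together with the standard fact (recalled at the start of Section~\ref{s:cic}) that complete integral closure is equivalent to $\bigcap_n (D:_D Du^n) = (0)$ for all $u \in K \setminus D$.

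For the ``in particular'' clause I would specialize to $\st = v$. By the definition of a $\st$-Pr\"ufer domain (and the parenthetical remark following it in the text), a $v$-domain is precisely a $v$-Pr\"ufer domain, so the general statement applies with $((D:_D Du)^n)^{\st}$ replaced by $((D:_D Du)^n)_v$. Finally, since essential domains and P$v$MDs are $v$-domains, the same characterization descends to those two classes as immediate special cases.

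There is, honestly, no genuine obstacle at this stage: all the difficulty was front-loaded into Corollary~\ref{c:starprufer} (that $\st$-invertibility of finitely generated ideals forces the $\st$-PCD property) and into Proposition~\ref{p:weakcap} (the interchange of the infinite intersection with the $\st$-operation). The only point meriting a moment's care is the bookkeeping among the nested classes --- $v$-domain $=$ $v$-Pr\"ufer domain, and essential domains and P$v$MDs being $v$-domains --- but each of these containments is either definitional or already cited in the excerpt, so the argument closes with a straightforward \qed.
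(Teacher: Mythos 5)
Your proposal is correct and matches the paper's own proof exactly: the paper derives this corollary by the single observation ``Combining the proposition with Corollary~\ref{c:starprufer}'' --- i.e., a $\st$-Pr\"ufer domain is a $\st$-PCD (hence a weak $\st$-PCD), so Proposition~\ref{p:weakcap}(2) applies, with the ``in particular'' clause following by taking $\st=v$ and recalling that essential domains and P$v$MDs are $v$-domains. Nothing in your write-up deviates from or adds gaps to that argument.
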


\begin{proposition} \label{p:maxt} A weak $\st$-PCD $D$ is completely integrally closed if for every maximal $\st_f$-ideal $P$ of $D$ we have $\bigcap_{n=1}^{\infty} (P^n)^{\st}=(0)$.  In particular, a weak $d$-PCD {\rm (}weak $v$-PCD{\rm )} $D$ is completely integrally closed if for every maximal ideal 
{\rm (}maximal $t$-ideal{\rm )} $P$ of $D$ we have $\bigcap_{n=1}^{\infty} P^n=(0)$ {\rm (}$\bigcap_{n=1}^{\infty} (P^{n})_{v}=(0){\rm )}.$
\end{proposition}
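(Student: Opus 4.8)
The plan is to reduce immediately to Proposition~\ref{p:weakcap}(2), which says that the weak $\st$-PCD $D$ is completely integrally closed exactly when $\bigcap_{n=1}^{\infty}((D:_D Du)^n)^{\st}=(0)$ for every $u \in K \setminus D$. So I would fix such a $u$, set $I=(D:_D Du)$, and observe that $1 \notin I$ (else $u \in D$), so that $I$ is a proper nonzero integral ideal. The entire problem then becomes showing $\bigcap_{n=1}^{\infty}(I^n)^{\st}=(0)$.

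The crux is the claim that $I$ is a $\st_f$-ideal. Here I would note that $I = D \cap u^{-1}D$ is an intersection of two divisorial fractional ideals and hence is itself divisorial, i.e. $I=I_v$. Since $\st_f \le \st \le v$, the inclusions $I \subseteq I^{\st_f} \subseteq I_v = I$ force $I^{\st_f}=I$. Because $\st_f$ is of finite type and $I$ is a \emph{proper} $\st_f$-ideal, $I$ lies in some maximal $\st_f$-ideal $P$.

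From there the argument closes by monotonicity: $I \subseteq P$ gives $I^n \subseteq P^n$ and hence $(I^n)^{\st} \subseteq (P^n)^{\st}$ for every $n$, so $\bigcap_{n=1}^{\infty}(I^n)^{\st} \subseteq \bigcap_{n=1}^{\infty}(P^n)^{\st}=(0)$ by hypothesis. Applying Proposition~\ref{p:weakcap}(2) then yields complete integral closure. The two ``in particular'' clauses I would dispatch as direct specializations: for $\st=d$ one has $\st_f=d$, the maximal $d$-ideals are just the maximal ideals, and $(P^n)^d=P^n$; for $\st=v$ one has $\st_f=t$, the maximal $v_f$-ideals are the maximal $t$-ideals, and $(P^n)^v=(P^n)_v$.

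I expect the one genuinely substantive step to be the observation that $I=(D:_D Du)$ is a $\st_f$-ideal, since this is what licenses placing $I$ inside a maximal $\st_f$-ideal; the remainder is bookkeeping. The supporting fact I would lean on—that a finite-type star operation admits a maximal $\st_f$-ideal above any proper $\st_f$-ideal—is standard (a Zorn's-lemma argument using that unions of chains of proper $\st_f$-ideals remain proper $\st_f$-ideals), and it is precisely the finite-type hypothesis on $\st_f$ that makes it available.
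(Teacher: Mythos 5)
Your proof is correct and follows essentially the same route as the paper's: contain $(D:_D Du)$ in a maximal $\st_f$-ideal $P$, use monotonicity to get $((D:_D Du)^n)^{\st} \subseteq (P^n)^{\st}$, and invoke Proposition~\ref{p:weakcap}. The only difference is that you explicitly justify the containment step (via divisoriality of $(D:_D Du)=D \cap u^{-1}D$ and the Zorn argument for finite-type operations), a detail the paper leaves implicit.
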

\begin{proof} Let $D$ be a weak $\st$-PCD, and let  $u \in K \setminus D$.  Then $(D:_D Du) \subseteq P$ for some maximal $\st_f$-ideal $P$ of $D$, and hence $((D:_D Du)^n)^{\st} \subseteq (P^n)^{\st}$.  The conclusion then follows from Proposition~\ref{p:weakcap}.
\end{proof}

The condition on the maximal $t$-ideals in Proposition~\ref{p:maxt} is quite stringent.  In particular, the condition requires a maximal $t$-ideal $P$ to satisfy $P_v \ne D$. Counterexamples to the converse of Proposition~\ref{p:maxt} abound.  For example, a non-discrete
rank one valuation domain $(D,M)$ is a $v$-PCD and completely integrally closed but does not satisfy $%
\cap (M^{n})_{v}=(0)$; in fact, $(M^n)_v=D$ for each $n$.  For another example, $D=k[x,y]$, $k$ a field and $x,y$ indeterminates, is a completely integrally closed $v$-PCD, but $(x,y)_v=D$.

On the other hand, if we require even more, we can obtain interesting characterizations of Krull domains. Note that if for a maximal $t$-ideal $P$ we have $\cap (P^{n})_{v}=(0)$, then $P_v \ne D$ and hence $P_v=P$, that is, $P$ is divisorial. In \cite{gv} Glaz and Vasconselos called a domain $D$ an \emph{$H$-domain} if each ideal $I$ of $D$ with $I_v=D$ contains a finitely generated ideal $J$ with $J_v=D$.  According to \cite[Proposition 2.4]{hz}, the domain $D$ is an $H$-domain if and only if every maximal $t$-ideal of $D$ is divisorial. It is also easy to see that $D$ is an $H$-domain if and only  every $v$-invertible
ideal of $D$ is $t$-invertible. In particular, if an H-domain $D$ is a $v$-domain, 
it must be a P$v$MD. Finally, Glaz and Vasconcelos showed that a domain $D$ is a Krull domain if and
only if it is a completely integrally closed $H$-domain \cite[3.2(d)]{gv}. 
\bs

\begin{corollary} \label{c:hdom} The following are equivalent for a domain $D$. \begin{enumerate}
\item $D$ is a Krull domain.
\item $D$ is a completely integrally closed $H$-domain.
\item $D$ is an $H$-domain and a $v$-domain with $t$-dimension one.
\item $D$ is an $H$-domain and a P$v$MD with $t$-dimension one.
\item $D$ is an $H$-domain and a $v$-PCD with $\bigcap_{n=1}^{\infty} (P^n)_v=(0)$ for each maximal $t$-ideal $P$ of $D$.
\item $D$ is an integrally closed $H$-domain in which $((a,b)^n)^{-1}=(((a,b)^{-1})^n)_v$ for all $a,b \in D \setminus (0)$ and $\bigcap_{n=1}^{\infty} (P^n)_v =(0)$ for each maximal $t$-ideal $P$ of $D$.
\end{enumerate}
\end{corollary}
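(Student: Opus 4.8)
The plan is to hang everything on the Glaz--Vasconcelos identity (1)$\lra$(2) recalled just before the statement, namely that a domain is Krull exactly when it is a completely integrally closed $H$-domain, and then to show that each of (3)--(6) supplements the (ever-present) $H$-domain hypothesis with just enough to force complete integral closure. Since all six conditions already build in ``$H$-domain,'' the only real content is the passage to and from complete integral closure; the $v$-PCD and P$v$MD clauses are the mechanisms that supply it.

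For the forward direction I would first check that a Krull domain satisfies (3)--(6). A Krull domain is an $H$-domain (its maximal $t$-ideals are the divisorial height-one primes), a P$v$MD, hence a $v$-domain, of $t$-dimension one, and by Corollary~\ref{c:starprufer} (or Corollary~\ref{c:pvmd}) a $v$-PCD; it is integrally closed, and the displayed identity of (6) then holds by Theorem~\ref{t:pcdchar} since $D$ is a root-closed $v$-PCD. It remains to verify $\bigcap_{n=1}^\infty (P^n)_v = (0)$ for each maximal $t$-ideal $P$: localizing at the height-one prime $P$, where $D_P$ is a DVR, $t$-invertibility (and $v$-coherence) of the finite-type ideal $P^n$ gives $(P^n)_v D_P = (PD_P)^n$, and $\bigcap_n (PD_P)^n = (0)$. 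This establishes (1)$\Rightarrow$(3),(4),(5),(6).

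For the converses I would argue as follows. (3)$\Rightarrow$(4): an $H$-domain that is a $v$-domain is a P$v$MD, as recalled before the corollary. (4)$\Rightarrow$(1): if $D$ is a P$v$MD of $t$-dimension one, then $D=\bigcap_P D_P$ over its maximal $t$-ideals with each $D_P$ a \emph{rank-one} valuation domain; rank-one valuation domains are completely integrally closed, and an intersection of completely integrally closed overrings sharing the quotient field $K$ is again completely integrally closed, so $D$ is completely integrally closed, and as an $H$-domain it is Krull by (2)$\lra$(1). (5)$\Rightarrow$(2): by Proposition~\ref{p:maxt} with $\st=v$ (so $\st_f=t$), a $v$-PCD with $\bigcap_n (P^n)_v=(0)$ for all maximal $t$-ideals $P$ is completely integrally closed, which with the $H$-domain hypothesis is exactly (2). (6)$\Rightarrow$(5): integral closure implies root closure, so by Theorem~\ref{t:pcdchar} the displayed identity makes $D$ a $v$-PCD, and the remaining hypotheses of (6) are precisely those of (5).

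The one genuinely delicate point, and where I expect to spend the most care, is (4)$\Rightarrow$(1): justifying that ``$t$-dimension one'' for a P$v$MD really forces each localization $D_P$ at a maximal $t$-ideal to be a rank-one (hence completely integrally closed) valuation domain, which rests on the standard correspondence between prime $t$-ideals of $D$ contained in $P$ and the primes of $D_P$. The complementary computation $\bigcap_n (P^n)_v=(0)$ for Krull domains in the (1)$\Rightarrow$(5),(6) step is routine once one localizes at the height-one prime $P$ and uses $t$-invertibility to pull the $v$-operation inside the intersection. Everything else is bookkeeping assembled from (1)$\lra$(2), Theorem~\ref{t:pcdchar}, Proposition~\ref{p:maxt}, and Corollaries~\ref{c:starprufer} and \ref{c:pvmd}.
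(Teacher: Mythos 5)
Your proposal is correct, and its implication graph coincides with the paper's: (1)$\Leftrightarrow$(2) via Glaz--Vasconcelos, (1)$\Rightarrow$(3),(4),(5),(6), then (3)$\Rightarrow$(4) from the remark that an $H$-domain which is a $v$-domain is a P$v$MD, (6)$\Rightarrow$(5) via Theorem~\ref{t:pcdchar}, and (5)$\Rightarrow$(2) via Proposition~\ref{p:maxt}. The one genuine divergence is (4)$\Rightarrow$(1), which you correctly identify as the delicate step. The paper never passes through complete integral closure there: it takes a minimal prime $P$ of a principal ideal (a maximal $t$-ideal, hence divisorial by the $H$-hypothesis), picks $u\in P^{-1}\setminus D$, uses $t$-dimension one to conclude $P=(D:_D Du)$, and then invokes \cite[Proposition 2.4]{hz} to get Krull directly. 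You instead route (4) through (2): $t$-dimension one forces each maximal $t$-ideal of the P$v$MD to have height one (because a nonzero prime properly inside $P$ would have valuation localization and hence be a $t$-prime, contradicting $t$-dimension one), so $D=\bigcap_P D_P$ is an intersection of rank-one valuation domains and therefore completely integrally closed, and the $H$-hypothesis plus (1)$\Leftrightarrow$(2) finishes. Your argument is sound and more self-contained --- it trades the finer content of \cite[Proposition 2.4]{hz} for the standard fact that essential primes are $t$-primes plus stability of complete integral closure under intersections --- while the paper's is shorter. You also flesh out what the paper dismisses as clear in (1)$\Rightarrow$(5),(6): your verification of $\bigcap_{n=1}^{\infty}(P^n)_v=(0)$ by localizing at the height-one prime $P$, using the \cite[Lemma 2.5]{bz}-type identity $(P^n)_vD_P=(P^nD_P)_{v_P}=(PD_P)^n$ (legitimate since a Krull domain is Mori, hence $v$-coherent), is a correct and welcome addition.
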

\begin{proof} The equivalence of (1) and (2) and the implication (3) $\ra$ (4) are discussed above. It is clear that (1) $\ra$ (3) and (6). Assume (4), and let $P$ be a minimal prime of a principal ideal.  Then $P$ is a maximal $t$-ideal and is therefore divisorial (see above).  Pick $u \in P^{-1} \setminus D$.  Then, since $D$ has $t$-dimension one, $P=(D:_D Du)$. It follows that $D$ is a Krull domain by \cite[Proposition 2.4]{hz}.  Hence (4) $\ra$ (1). Finally, we have (6) $\ra$ (5) by Theorem~\ref{t:pcdchar} and (5) $\ra$ (2) by Proposition~\ref{p:maxt}.
\end{proof}

According to Corollary~\ref{c:hdom}, if $R$ is an integrally closed non-Krull $H$-domain with $\bigcap_{n=1}^{\infty} (P^n)_v=(0)$ for each maximal $t$-ideal $P$ of $R$, then we must have $((a,b)^n)^{-1} \ne (((a, b)^{-1})^n)_v$ for some nonzero $a,b \in D$ and $n>1$.  We next give an example of this phenomenon.

\begin{example} \label{e:nonkrull} Let $T$ be a PID with a maximal ideal $P$ such that $k:=T/P$ admits a field $F$ that is algebraically closed in $k$ (e.g., $T=F(y)[x]$, $y,x$ indeterminates).  Let $\varphi:T \to k$ be the natural projection and set $R=\varphi^{-1}(F)$.  Then $R$ is an integrally closed non-Krull domain (since $R$ is not completely integrally closed).  Of course, $P$ is a divisorial ideal of $R$.  In fact, each maximal ideal of $R$ is divisorial. To see this, let $Q\ne P$ be a maximal ideal of $R$.  Then $Q=N \cap R$ for 
maximal ideal $N \ne M$ of $T$.  Write $N=Tz$.  Then $z^{-1}PQ \subseteq z^{-1}PN\subseteq P \subseteq R$, and $z^{-1}P \nsubseteq R$ (indeed, $z^{-1}P \nsubseteq T$). Hence $Q$ is divisorial.  Hence $R$ is (vacuously) an $H$-domain.  Since $Q^n\subseteq N^n=Tz^n$ and $Tz^n$ is divisorial (as an ideal of $R$), we have $\bigcap_{n=1}^{\infty} (Q^n)_v \subseteq \bigcap_{n=1}^{\infty} Tz^n=(0)$.  Write $P=Tc$.  Then $P^n=Tc^n$, which is divisorial.  Hence $\bigcap_{n=1}^{\infty} (P^n)_v=\bigcap_{n=1}^{\infty} Tc^n=(0)$.  Thus $R$ has the required properties.  It is not difficult to identify elements $a,b$ as in the preceding paragraph: let $t \in T \setminus R$.  Then $(1,t)^{-1}=(R:_R Rt)=P$, whence $(((1,t)^{-1})^n)_v=P^n$.  On the other hand, $((1,t)^n)^{-1}=P$.  Now take $a=c$ and $b=ct$.
\end{example}

Let $D$ be a Noetherian domain.  Then $D$ is certainly an $H$-domain.  Moreover, $D$ is integrally closed if and only if $D$ is a Krull domain.  In view of the equivalence (1) $\lra$ (5) of Corollary~\ref{c:hdom}, we have:

\begin{corollary} \label{c:noe} A Noetherian domain $D$ is integrally closed if and only if $D$ is a $v$-PCD and $\bigcap_{n=1}^{\infty} (M^n)_v=(0)$ for each maximal $t$-ideal $M$ of $D$. \qed
\end{corollary}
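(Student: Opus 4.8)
The plan is to read this off directly from the equivalence (1) $\lra$ (5) of Corollary~\ref{c:hdom}. That equivalence says that $D$ is a Krull domain if and only if $D$ is an $H$-domain and a $v$-PCD satisfying $\bigcap_{n=1}^{\infty}(P^n)_v=(0)$ for each maximal $t$-ideal $P$. To specialize to the Noetherian case it therefore suffices to supply two standard facts: first, that every Noetherian domain is automatically an $H$-domain, so that the $H$-domain hypothesis in (5) is vacuous; and second, that a Noetherian domain is integrally closed precisely when it is a Krull domain, so that the left-hand side of (5) may be replaced by ``integrally closed.''

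For the first fact, recall that $D$ is an $H$-domain if every ideal $I$ with $I_v=D$ contains a finitely generated ideal $J$ with $J_v=D$. In a Noetherian domain every ideal is finitely generated, so we may simply take $J=I$; hence a Noetherian domain is an $H$-domain. The second fact is the classical characterization of Krull domains among Noetherian domains, namely that a Noetherian domain is Krull if and only if it is integrally closed (this is recorded in the paragraph preceding the statement).

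With these in hand the argument is immediate. If $D$ is integrally closed (and Noetherian), then $D$ is Krull, so by (1) $\ra$ (5) of Corollary~\ref{c:hdom} it is a $v$-PCD with $\bigcap_{n=1}^{\infty}(M^n)_v=(0)$ for each maximal $t$-ideal $M$. Conversely, if $D$ is a $v$-PCD satisfying this intersection condition, then---since $D$ is automatically an $H$-domain---condition (5) of Corollary~\ref{c:hdom} holds, and (5) $\ra$ (1) gives that $D$ is Krull and hence integrally closed. The only ``obstacle'' is recognizing that no work beyond Corollary~\ref{c:hdom} is required: the Noetherian hypothesis does all the heavy lifting, trivializing the $H$-domain condition and collapsing ``integrally closed'' and ``Krull'' into a single notion.
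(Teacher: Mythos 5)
Your proposal is correct and is essentially identical to the paper's own argument: the paper proves this corollary precisely by remarking that a Noetherian domain is automatically an $H$-domain and is integrally closed if and only if it is Krull, then invoking the equivalence (1) $\lra$ (5) of Corollary~\ref{c:hdom}. Your justification that one may take $J=I$ in the $H$-domain condition is exactly the right (standard) observation, so nothing is missing.
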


As we saw in Example~\ref{e:pvdstuff}(1), a Noetherian domain need not be a $v$-PCD.
What is more interesting here is the fact that not every Noetherian domain has
the property that for every maximal $t$-ideal $M$ we have $\cap
(M^{n})_{v}=(0).$ 
We end this section with an example of this.

\begin{example} \label{e:noe2} Let $F \subset k$ be a root closed extension of fields with $[k:F]$ finite.  Let $T=k[x,y]=k+M$, $x,y$ indeterminates and $M=(x,y)$, and let $R=F+M$.  Then $R$ is Noetherian.  It is easy to see that $M^{-1}=T$, whence $R$ is a $v$-PCD by Lemma~\ref{l:field-pullback}(2).  (But $R$ is not a weak $d$-PCD  by Proposition~\ref{p:rootclsd}(2)). By direct calculation or Corollary~\ref{c:noe}, we cannot have $\bigcap_{n=1}^{\infty} (M^n)_v =(0)$.  (Indeed, $(M^n)_v=M$ for each $n\ge 1$).
\end{example}


\section{Connections with other properties} \label{s:w}

In \cite{z} a domain $D$ was said to have the \emph{{\LARGE $\st$}-property} if for $
a_{1}, \ldots,a_{m}, b_{1},,...,b_{n}\in D\setminus (0)$ we have $(\bigcap_i 
 Da_i)(\bigcap_j Db_j)=\bigcap_{i,j} Da_{i}b_{j}$.  The authors of \cite{aaj} discussed a
special case of this, which we call here the \emph{{\LARGE $\st$}{\LARGE $\st$}-property}: $(Da \cap
Db)(Dc\cap Dd)=Dac\cap Dad\cap Dbc\cap Dbd$ for all $a,b,c,d\in D\setminus 
(0)$; and they showed that a Noetherian domain satisfying {\LARGE $\st$}{\LARGE $\st$} is
locally factorial \cite[Corollary 3.9]{aaj}. Now {\LARGE $\st$}  and {\LARGE $\st$}{\LARGE $\st$} are equivalent over a Noetherian domain, for, in this case, {\LARGE $\st$}{\LARGE $\st$} implies locally (factorial and hence) 
GCD, and it is easy to see that a locally GCD-domain is a locally {\LARGE $\st$}-domain and hence a {\LARGE $\st$}-domain \cite[Theorem 2.1]{z}. Thanks to its efficiency the {\LARGE $\st$}-property  
can be used to provide a more satisfying characterization
of integrally closed Noetherian domains than does the $v$-PCD property, as we shall see below. But the {\LARGE $\st$}-property is more potent in that it can be put to use even 
 in $v$-coherent domains. For example, it was shown in
 \cite[Corollary 1.7]{zgdd} that the {\LARGE $\st$}-property makes a $v$-coherent domain
a \emph{generalized GCD-domain} (GGCD-domain): a domain in which $aD\cap bD$ is invertible
for each pair $a,b\in D\setminus (0)$. We restate the result as the
following proposition.

\begin{proposition} \label{p:ggcd} An integral domain is a GGCD-domain if and only if $D$ is a 
$v$-coherent {\LARGE $\st$}-domain. \qed
\end{proposition}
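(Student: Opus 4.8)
The plan is to read the equivalence as ``$D$ is a GGCD-domain if and only if $D$ is $v$-coherent and has the {\LARGE $\st$}-property,'' and to dispatch the two implications separately. One implication, that a $v$-coherent {\LARGE $\st$}-domain is a GGCD-domain, is precisely \cite[Corollary 1.7]{zgdd}, so for that half I would simply quote the cited result. The entire burden is the converse: starting from the assumption that $aD\cap bD$ is invertible for all nonzero $a,b$, I must recover both $v$-coherence and the {\LARGE $\st$}-property.

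Everything rests on one lemma, which I expect to be the main obstacle: in a GGCD-domain, every finite intersection $J=\bigcap_{i=1}^{n} a_iD$ of principal ideals is invertible, with inverse $(a_1^{-1},\dots,a_n^{-1})$. The defining hypothesis gives this only for $n=2$, so the work is to boost it to arbitrary $n$, and I would do this by localization rather than by a direct induction on ideals. First I would observe that $D$ is locally a GCD-domain: for a maximal ideal $M$, each $aD_M\cap bD_M=(aD\cap bD)_M$ is principal because $aD\cap bD$ is invertible and hence locally principal, and a domain in which every intersection of two principal ideals is principal is a GCD-domain. Consequently $J_M=\bigcap_i a_iD_M$ is principal, say $J_M=dD_M$ with $d$ an LCM of $a_1,\dots,a_n$ in $D_M$. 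Put $B=(a_1^{-1},\dots,a_n^{-1})$. Each $a_i^{-1}J\subseteq D$, so $JB\subseteq D$; and locally $(JB)_M=\sum_i (d/a_i)D_M$, which equals $D_M$ because the elements $d/a_i$ have unit gcd in the GCD-domain $D_M$. Since an ideal that localizes to $D_M$ at every $M$ is all of $D$, I conclude $JB=D$, so $J$ is invertible.

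Granting the lemma, the two desired properties follow formally. For $v$-coherence, a finitely generated ideal $I=(a_1,\dots,a_n)$ satisfies $I^{-1}=\bigcap_i a_i^{-1}D$, a finite intersection of principal fractional ideals, hence invertible by the lemma; as invertible ideals are finitely generated and divisorial, $I^{-1}$ is a $v$-ideal of finite type. For the {\LARGE $\st$}-property, set $A=\bigcap_i a_iD$, which is invertible, and use two facts: multiplication by a principal ideal commutes with intersection, and an invertible ideal distributes over finite intersections (if $A$ is invertible and $x\in\bigcap_j AC_j$ then $A^{-1}x\subseteq\bigcap_j C_j$, so $x\in A\bigcap_j C_j$). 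These give
$$\Big(\bigcap_i a_iD\Big)\Big(\bigcap_j b_jD\Big)=\bigcap_j b_j\Big(\bigcap_i a_iD\Big)=\bigcap_j\bigcap_i a_ib_jD=\bigcap_{i,j}a_ib_jD,$$
which is exactly the {\LARGE $\st$}-property, completing the converse.
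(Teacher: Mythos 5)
Your overall architecture is sensible, and for the record the paper itself gives no argument for this proposition: it is stated with a \qed{} as a restatement of \cite[Corollary 1.7]{zgdd}. So quoting that citation for the implication ``$v$-coherent {\LARGE $\st$}-domain $\Rightarrow$ GGCD'' is exactly what the paper does, and your derivations of $v$-coherence and of the {\LARGE $\st$}-property \emph{from} your key lemma are correct as formal deductions: $I^{-1}=\bigcap_i a_i^{-1}D$ for $I=(a_1,\dots,a_n)$, invertible ideals are finitely generated and divisorial, and your observation that an invertible ideal distributes over intersections is valid.

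The genuine gap is the key lemma itself, which fails in two ways. First, its statement is false as written: in a GGCD-domain the intersection $J=\bigcap_i a_iD$ is indeed invertible, but its inverse is $\bigl((a_1^{-1},\dots,a_n^{-1})\bigr)_v$, not $(a_1^{-1},\dots,a_n^{-1})$. Already in the GCD-domain $k[x,y]$ with $n=2$, $a_1=x$, $a_2=y$, one has $J=xyD$ while $J\cdot(x^{-1},y^{-1})=(y,x)\ne D$. Second, and this is the step that cannot be repaired as stated, your local computation rests on the claim that elements with unit gcd in a GCD-domain generate the unit ideal; that holds in B\'ezout (e.g., valuation) domains but not in general GCD-domains. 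In the local GCD-domain $D_M=k[x,y]_{(x,y)}$, take $a_1=x$, $a_2=y$, so $d=xy$ is their lcm: then $\gcd(d/a_1,d/a_2)=\gcd(y,x)$ is a unit, yet $(y,x)D_M=MD_M\ne D_M$ --- precisely your displayed step $\sum_i(d/a_i)D_M=D_M$ failing. What your localization argument actually proves is only that $J$ is locally principal, and locally principal does not imply invertible in general (the maximal ideals of a non-Dedekind almost Dedekind domain are the standard counterexample), so the lemma is not recovered even after correcting the inverse to $B_v$. The true statement --- that finite intersections of principal ideals are invertible in a GGCD-domain --- is one of the known Anderson--Anderson characterizations of GGCD-domains, but it requires a different induction reducing $J\cap cD$ to the two-ideal hypothesis rather than a pointwise gcd computation; alternatively, one can simply cite \cite[Corollary 1.7]{zgdd} for the full equivalence, as the paper does.
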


Now recall that P$v$MDs may be characterized as $t$-locally valuation domains, that is, $D$ is a P$v$MD if and only if $D_P$ is a valuation domain for each maximal $t$-ideal $P$ of $D$. In \cite{ch}, Chang used the local version of Proposition~\ref{p:ggcd} to
characterize P$v$MDs.  Before stating Chang's result, we need some background.  We first recall the $w$-operation: for a nonzero fractional ideal $A$ of $D$, $A_w=\{x \in K \mid xB \subseteq A \text{ for some finitely ideal $B$ of $D$ with } B_v=D\}$.  It is well known that $A_w=\bigcap AD_P$, where the intersection is taken over the set of maximal $t$-ideals $P$ of $D$; moreover, we have $A_wD_P=AD_P$ for each $P$. Call $D$ a \emph{{\LARGE $\st$}($w$)-domain} if $((\bigcap_{i}Da_i)(\bigcap_{j}Db_j))_{w}=\bigcap_{i,j} Da_{i}b_{j}$ for all $a_{i},b_{j} \in D \setminus (0)$.

\begin{proposition} \label{p:chang} {\rm (\cite [Theorem 3]{ch})} An integral domain $D$ is a P$v$MD if and
only if $D$ is a $v$-coherent {\LARGE $\st$}{\rm (}$w${\rm )}-domain. \qed
\end{proposition}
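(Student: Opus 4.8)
The plan is to reduce the statement to the behavior of $D$ at its maximal $t$-ideals, exploiting the two facts recalled just above: that $A_w=\bigcap_P AD_P$ and $A_wD_P=AD_P$ as $P$ runs over the maximal $t$-ideals of $D$, and that $D$ is a P$v$MD if and only if $D_P$ is a valuation domain for every such $P$. The localized version of Proposition~\ref{p:ggcd} will do the heavy lifting.

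For the forward implication, suppose $D$ is a P$v$MD. Then $D$ is $v$-coherent directly from the definition of P$v$MD (for finitely generated $A$, $A^{-1}$ is a finite-type $v$-ideal). To verify the {\LARGE $\st$}($w$)-property, fix nonzero $a_i,b_j$ and write $A=(\bigcap_i Da_i)(\bigcap_j Db_j)$ and $B=\bigcap_{i,j}Da_ib_j$. For each maximal $t$-ideal $P$, the ring $D_P$ is a valuation domain, hence a GCD- and so a {\LARGE $\st$}-domain; since localization is flat it commutes with the finite intersections, giving $AD_P=(\bigcap_i D_Pa_i)(\bigcap_j D_Pb_j)=\bigcap_{i,j}D_Pa_ib_j=BD_P$. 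Intersecting over all maximal $t$-ideals and using $A_w=\bigcap_P AD_P$ gives $A_w=B_w$; and since $B$ is a finite intersection of principal, hence divisorial, ideals, $B$ is a $w$-ideal, so $B_w=B$ and thus $A_w=B$, as wanted.

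For the converse, let $D$ be a $v$-coherent {\LARGE $\st$}($w$)-domain and fix a maximal $t$-ideal $P$; I must show $D_P$ is a valuation domain. Localizing the defining equation at $P$ and using $A_wD_P=AD_P$ together with flatness shows that $D_P$ is a {\LARGE $\st$}-domain (every ideal of $D_P$ generated by elements of $D_P$ is, up to units, generated by elements of $D$, so the {\LARGE $\st$}-identities for tuples from $D$ suffice). Next I would check that $D_P$ is $v$-coherent: for finitely generated $I\subseteq D$ one has $(ID_P)^{-1}=I^{-1}D_P$, and writing $I^{-1}=J_v$ with $J$ finitely generated, \cite[Lemma 2.5]{bz} yields $I^{-1}D_P=(JD_P)_{v_P}$, a finite-type $v$-ideal of $D_P$. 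Proposition~\ref{p:ggcd} then shows $D_P$ is a GGCD-domain; being local, $aD_P\cap bD_P$ is invertible and hence principal, so $D_P$ is a GCD-domain.

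The remaining, and most delicate, step is to promote ``local GCD-domain'' to ``valuation domain,'' since a local GCD-domain need not be a valuation domain (witness $k[[x,y]]$). The extra ingredient is that $PD_P$ is a maximal $t$-ideal of $D_P$---a standard consequence of $P$ being a maximal $t$-ideal of $D$---so that $(PD_P)_t=PD_P\ne D_P$. Given nonzero $a,b\in D_P$, write $a=da'$ and $b=db'$ with $d=\gcd(a,b)$ and $\gcd(a',b')=1$; then $(a',b')_t=(a',b')_v=D_P$, so $a'$ and $b'$ cannot both lie in $PD_P$ (otherwise $(a',b')_t\subseteq(PD_P)_t\ne D_P$). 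Hence one of them is a unit, giving $a\mid b$ or $b\mid a$, so $D_P$ is a valuation domain; the $t$-local characterization of P$v$MDs then finishes the proof. I expect this last step to be the main obstacle: the hypothesis only produces a local GCD-domain, and one must inject the $t$-ideal structure of $PD_P$ to force a linear order of divisibility.
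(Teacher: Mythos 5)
Your overall strategy is sound and most of the steps check out: the forward direction (P$v$MD $\Rightarrow$ $v$-coherence, plus the {\LARGE $\st$}($w$)-identity obtained by localizing at maximal $t$-ideals, using $A_w=\bigcap_P AD_P$, $A_wD_P=AD_P$, and the fact that $\bigcap_{i,j}Da_ib_j$ is divisorial and hence a $w$-ideal) is correct, as are your reductions in the converse showing that $D_P$ is a $v$-coherent {\LARGE $\st$}-domain and hence, by Proposition~\ref{p:ggcd}, a local GGCD-domain, i.e., a GCD-domain. (For the record, the paper offers no proof of this proposition at all---it is quoted from \cite[Theorem 3]{ch}---so your argument is a genuine reconstruction rather than a parallel of anything in the text.)

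There is, however, one genuine gap, and it sits exactly where you predicted the delicacy would be. Your assertion that $PD_P$ is a (maximal) $t$-ideal of $D_P$ ``as a standard consequence of $P$ being a maximal $t$-ideal of $D$'' is false in general: maximal $t$-ideals need not be well behaved under localization, and the paper warns about precisely this pitfall in Section~\ref{s:w} (``Localizing at a maximal $t$-ideal does not in general produce a $t$-local domain!''). As written, the converse is therefore incomplete. Fortunately the step is repairable with a tool you already invoked: $v$-coherence of $D$. Given a finitely generated ideal $J\subseteq PD_P$ of $D_P$, write $J=ID_P$ with $I\subseteq D$ finitely generated; then $I\subseteq PD_P\cap D=P$, and \cite[Lemma 2.5]{bz} (exactly as in the proof of Corollary~\ref{c:vcoh}) gives $J_{v_P}=(ID_P)_{v_P}=I_vD_P\subseteq PD_P$, since $I_v=I_t\subseteq P$ because $P$ is a $t$-ideal. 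Hence $(PD_P)_{t_P}=PD_P$, so $D_P$ is $t$-local after all, and your gcd argument (one of $a',b'$ must be a unit, lest $(a',b')_{v_P}\subseteq(PD_P)_{t_P}\ne D_P$) then correctly forces $D_P$ to be a valuation domain, finishing the proof via the $t$-local characterization of P$v$MDs. In short: the conclusion survives, but what makes $PD_P$ a $t$-ideal here is the $v$-coherence hypothesis, not the maximality of $P$ as a $t$-ideal, and that justification must be supplied explicitly.
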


In view of \cite{aaj} we can introduce the notion of a \emph{{\LARGE $\st$}{\LARGE $\st$}($w$)-domain} as
a domain $D$ such that $((Da \cap Db)(Dc\cap Dd))_{w}= Dac\cap Dad\cap
Dbc\cap Dbd$, for all $a,b,c,d\in D\setminus (0)$.    We shall show that a Noetherian domain $D$ is integrally closed $\lra$ $D$ is a $w$-PCD $\lra$ $D$ is a {\LARGE $\st$}{\LARGE $\st$}($w$)-domain $\lra$ $D$ is a  {\LARGE $\st$}($w$)-domain. This is interesting in light of \cite[comment following Lemma 3.7]{aaj}, where it is shown that a Krull domain is a $d$-PCD if and only if all powers of each maximal $t$-ideal are divisorial and that an integrally closed Noetherian domain need not have this property and hence need not be a $d$-PCD.  

In fact, we can establish the result in a more general setting.  Recall that a domain $D$ is a \emph{strong Mori domain} if it satisfies the ascending chain condition on $w$-ideals.  These domains were introduced and studied by Wang and McCasland \cite{wm, wm2}.  They are characterized as domains $D$ for which (1) $D_M$ is Noetherian for every maximal $t$-ideal $M$ of $D$ and (2) $D$ has finite $t$-character (each nonzero element $a$ of $D$ is contained in only finitely many maximal $t$-ideals of $D$) \cite[Theorem 1.9]{wm2}. It is well-known (and follows easily from (1)) that an integrally closed strong Mori domain is completely integrally closed and hence a Krull domain.

\begin{theorem} \label{t:strongmori} The following statements are equivalent for a strong Mori domain $D$. \begin{enumerate}
\item $D$ is integrally closed. 
\item $D$ is a $w$-PCD.
\item $D$ is a {\LARGE $\st$}{\LARGE $\st$}{\rm (}$w${\rm )}-domain.
\item $D$ is a {\LARGE $\st$}{\rm (}$w${\rm )}-domain.
\item $D$ is completely integrally closed {\rm (}and hence a Krull domain{\rm )}.
\end{enumerate}
\end{theorem}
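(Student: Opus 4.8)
The plan is to prove the five-way equivalence by establishing a cycle of implications, exploiting the strong Mori structure (namely that $D_M$ is Noetherian for each maximal $t$-ideal $M$ and that $D$ has finite $t$-character) together with the results already developed. I would organize the argument as (1)$\Rightarrow$(2)$\Rightarrow$(3)$\Rightarrow$(4)$\Rightarrow$(5)$\Rightarrow$(1), since the hardest links are between the $w$-PCD property and the $\st\st(w)$/$\st(w)$ properties, while (5)$\Rightarrow$(1) is free (complete integral closure trivially implies integral closure) and (1)$\Rightarrow$(5) is the quoted fact that an integrally closed strong Mori domain is Krull, which would let me close the cycle quickly.

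For (1)$\Rightarrow$(2): since $D$ is integrally closed strong Mori, it is a Krull domain, hence a P$v$MD, hence a P$w$MD, so by Corollary~\ref{c:pvmd} (the P$\st$MD case with $\st=w$) $D$ is a $w$-PCD. Alternatively, I would argue locally: for each maximal $t$-ideal $M$, $D_M$ is a Noetherian valuation (i.e.\ DVR) domain, hence a $d$-PCD, and then use the $w$-analogue of Corollary~\ref{c:vcoh} to lift the $w_n$ property from the localizations $D_M$ (where $w$ localizes to $d$) back to $D$; strong Mori domains are $v$-coherent, so the localization machinery applies. For (5)$\Rightarrow$(1) I simply observe that completely integrally closed implies integrally closed.

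The genuinely delicate steps are the equivalences among the ``$\st$-type'' multiplicative identities and the PCD property, i.e.\ (2)$\Leftrightarrow$(3)$\Leftrightarrow$(4). Here the key translation is that the $w$-PCD identity $Da^n\cap Db^n=((Da\cap Db)^n)_w$ is the two-generator, all-powers form, whereas the $\st(w)$ and $\st\st(w)$ properties are multiplicative conductor identities involving products of intersections. The plan is to check these identities \emph{locally at each maximal $t$-ideal} $M$: since $w$ localizes trivially ($A_wD_M=AD_M$) and $D_M$ is a DVR, all four of (2), (3), (4) reduce to identities in a DVR, where intersections are lcm's and everything is governed by valuations, so each reduces to a routine equality of exponents. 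The finite $t$-character and the fact that $A_w=\bigcap_M AD_M$ are what make the local-to-global passage valid for each of these $w$-identities. I would prove (4)$\Rightarrow$(5) by showing the $\st(w)$-property forces $D_M$ to be a valuation domain for each maximal $t$-ideal $M$ (via the local form of Proposition~\ref{p:ggcd}/Proposition~\ref{p:chang}), hence $D$ is a P$v$MD; combined with strong Mori (so each $D_M$ is a DVR) this gives complete integral closure.

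I expect the main obstacle to be (2)$\Rightarrow$(3), i.e.\ extracting the four-element product identity $\st\st(w)$ from the two-element all-powers $w$-PCD identity. The subtlety is that a $w$-PCD statement is about powers of a single intersection $Da\cap Db$, while $\st\st(w)$ mixes four distinct elements $a,b,c,d$; bridging them requires either passing to the local DVR picture (where both collapse to valuation arithmetic and the implication is transparent) or reproving the relevant lemmas of \cite{aaj} in the $w$-setting. I would therefore route the argument through the localizations $D_M$ rather than attempting a direct global manipulation, using that $w$-operations commute with localization at maximal $t$-ideals and that in a DVR every one of these conductor and power identities holds automatically. Once the local reductions are set up, each implication in the cycle becomes a short verification, and the finite-character hypothesis guarantees the intersections defining $w$ behave well.
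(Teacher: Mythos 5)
Your global architecture---checking each $w$-identity locally at the maximal $t$-ideals, using $A_wD_M=AD_M$ in one direction and $A_w=\bigcap_M AD_M$ in the other---is essentially the paper's Lemma~\ref{l:wpcd}, and several of your links are sound: (5)$\Rightarrow$(1) is trivial; (1)$\Rightarrow$(5) is the quoted fact about integrally closed strong Mori domains; (1)$\Rightarrow$(2) via Krull $\Rightarrow$ P$v$MD and Theorem~\ref{t:*-inv} (with $\st=w$, since finitely generated ideals of a Krull domain are $w$-invertible) works; and your (4)$\Rightarrow$(5) via Chang's theorem (Proposition~\ref{p:chang}) is legitimate, because strong Mori implies Mori implies $v$-coherent, so a $\st(w)$-property forces $D$ to be a P$v$MD, whose localizations at maximal $t$-ideals are then Noetherian valuation domains.

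The genuine gap is in your central implications (2)$\Rightarrow$(3)$\Rightarrow$(4). You reduce all of (2), (3), (4) to ``identities in a DVR,'' but the assertion that $D_M$ is a DVR is not available at that stage: it is equivalent to the Krull conclusion (5) you are trying to reach, so the reduction is circular. Under (2) alone you know only that each $D_M$ is a Noetherian local $d$-PCD; under (3), only that it is a Noetherian local {\LARGE $\st$}{\LARGE $\st$}-domain. The missing ingredient is exactly the paper's local engine, Lemma~\ref{l:tlocnoe}: a \emph{$t$-local} Noetherian domain that is a (weak) $d$-PCD, or a {\LARGE $\st$}{\LARGE $\st$}-domain, must already be a DVR. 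For the PCD half this rests on Proposition~\ref{p:rootclsd} ($M$ divisorial rules out $M^{-1}=D$, Nakayama rules out $M=M^2$, so $M$ is invertible); for the {\LARGE $\st$}{\LARGE $\st$} half it is the argument of \cite[Corollary 3.9]{aaj}. Two further points your sketch elides: first, the $t$-locality of $D_M$ is itself a nontrivial strong-Mori fact (Kang's lemma, cited in the paper), and without it the local implication fails---$k[x,y]_{(x,y)}$ is a GCD-domain, hence a $d$-PCD, but not a DVR, precisely because its maximal ideal is not divisorial; second, your fallback of ``reproving the \cite{aaj} lemmas in the $w$-setting'' addresses only the {\LARGE $\st$}{\LARGE $\st$} side, not the $d$-PCD-to-DVR step needed for (2)$\Rightarrow$(3). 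Once Lemma~\ref{l:tlocnoe} is supplied, your local-global plan does close the cycle, and it then coincides in substance with the paper's proof via Lemmas~\ref{l:tlocnoe} and~\ref{l:wpcd}.
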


As mentioned above, items (1) and (5) are equivalent. The rest of the proof is contained in the next two lemmas.  For the first, we call a local domain $(D,M)$ \emph{$t$-local} if its maximal ideal is a $t$-ideal. (Perhaps a caveat is in order here.  Localizing at a maximal $t$-ideal does not in general produce a $t$-local domain!  However, this is not an issue in the strong Mori setting: for a strong Mori domain $D$, a prime $P$ of $D$ is a $t$-ideal (equivalently, divisorial) if and only if $PD_P$ is a $t$-ideal \cite[Lemma 3.17]{k}.)

\begin{lemma} \label{l:tlocnoe} For a $t$-local Noetherian domain $(D,M)$, the 
following statements are equivalent. \begin{enumerate}
\item $D$ is a {\LARGE $\st$}-domain.
\item $D$ is a {\LARGE $\st$}{\LARGE $\st$}-domain.
\item $D$ is integrally closed.
\item $D$ is a {\rm (}rank-one discrete{\rm )} valuation domain.
\item $D$ is a $d$-PCD.
\item $D$ is a weak $d$-PCD.
\end{enumerate}
\end{lemma}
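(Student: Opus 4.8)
The plan is to prove the chain of equivalences for a $t$-local Noetherian domain $(D,M)$ by combining a cycle of implications with appeals to results already established. Since $(D,M)$ is Noetherian and $t$-local, $M$ is a divisorial maximal ideal, and because $D$ is local the $w$-operation coincides with the $v$-operation (indeed with $d$ on finitely generated ideals in the local setting, as there is a single maximal $t$-ideal). This collapses the distinction between the {\LARGE $\st$}($w$) and plain {\LARGE $\st$} notions, so I expect (1) and (2) to sit together naturally via Proposition~\ref{p:ggcd}-type reasoning localized to the single maximal $t$-ideal $M$.

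First I would establish the easy implications. The implication (4) $\ra$ (3) and (4) $\ra$ (5) are immediate: a valuation domain is a GCD-domain, hence satisfies {\LARGE $\st$}{\LARGE $\st$} (by \cite[Theorem 2.1]{z} as cited) and is a $d$-PCD (valuation domains are $d$-PCDs, as noted before Corollary~\ref{c:essential}). The implications (1) $\ra$ (2) and (5) $\ra$ (6) are formally trivial from the definitions (a {\LARGE $\st$}-domain is a {\LARGE $\st$}{\LARGE $\st$}-domain; a $d$-PCD is a weak $d$-PCD). The substantive work is to close the loop back to integral closedness and then to the valuation property.

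The key step is to show (6) $\ra$ (3) $\ra$ (4), i.e., that any of the weak conditions forces $D$ to be a rank-one DVR. For (6) $\ra$ (3): if $D$ is a weak $d$-PCD, then by Proposition~\ref{p:rootclsd}(2) the maximal ideal $M$ must be invertible, satisfy $M^{-1}=D$, or satisfy $M=(M^2)^{\st}$ with $\st=d$, i.e. $M=M^2$. But $M$ is divisorial ($t$-local), so $M^{-1} \ne D$; and in a Noetherian domain $M=M^2$ forces $M=0$ by Nakayama, which is excluded. Hence $M$ is invertible, so $D$ is a Noetherian local domain with principal maximal ideal, i.e. a DVR, giving (4) directly. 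This is the crux, and I expect it to be the main obstacle only in the bookkeeping of verifying $M^{-1}\ne D$ cleanly—this follows because $M$ is a divisorial $t$-ideal in a Noetherian domain, so $M=M_v=(D:M^{-1})$, and $M^{-1}=D$ would force $M=D$. Once $M$ is invertible and principal, integral closedness (3) $\ra$ (3) is automatic and all remaining implications collapse.

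To finish cleanly I would arrange the implications as a single cycle: (4) $\ra$ (1) $\ra$ (2) and (4) $\ra$ (5) $\ra$ (6), together with (6) $\ra$ (4) via the Proposition~\ref{p:rootclsd} argument above, and (3) $\ra$ (4) using that an integrally closed Noetherian local domain of dimension one with divisorial maximal ideal is a DVR (here dimension one follows from $M$ being a maximal $t$-ideal that is the radical of a principal ideal, or more directly since invertibility of $M$ is forced once we know integral closedness via Proposition~\ref{p:rootclsd} applied through the $d$-PCD route—alternatively, (3) $\ra$ (1) by the local GCD argument of \cite{z}). The cleanest route is probably (6) $\ra$ (4) as the single hard arrow, with everything else routine; I would state it as (4) $\Rightarrow$ (1) $\Rightarrow$ (2) $\Rightarrow$ (6) $\Rightarrow$ (4) and separately note (4) $\Leftrightarrow$ (3) and (4) $\Rightarrow$ (5) $\Rightarrow$ (6) to absorb the remaining items.
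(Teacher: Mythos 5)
Your treatment of (3)--(6) is sound and matches the paper's key step: the paper likewise proves (6) $\ra$ (4) by noting that $M$ is divisorial (so $M^{-1}\ne D$), that $M \ne M^{2}$ in a Noetherian local domain, and then invoking Proposition~\ref{p:rootclsd}(2) to force $M$ invertible. But your final arrangement, (4) $\Rightarrow$ (1) $\Rightarrow$ (2) $\Rightarrow$ (6) $\Rightarrow$ (4), contains an arrow you never prove: (2) $\Rightarrow$ (6). Nothing in the proposal justifies that a {\LARGE $\st$}{\LARGE $\st$}-domain is a weak $d$-PCD, and this is not a formal triviality. Taking $c=a$, $d=b$ in the {\LARGE $\st$}{\LARGE $\st$}-identity gives $(Da\cap Db)^{2}=Da^{2}\cap Dab\cap Db^{2}$, which is contained in, but not obviously equal to, $Da^{2}\cap Db^{2}$; the missing containment $Da^{2}\cap Db^{2}\subseteq Dab$ is precisely a root-closure-type condition (if $x=a^{2}s=b^{2}t$, then $(x/ab)^{2}=st\in D$, so $x/ab$ is merely integral over $D$), i.e., exactly the kind of thing the lemma is trying to establish. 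As written, (1) and (2) are sinks in your implication diagram: you have justified arrows into them but none out, so their equivalence with (3)--(6) is not proved. The paper closes this hole by proving (2) $\ra$ (3) via the proof of \cite[Corollary 3.9]{aaj}, which shows a Noetherian {\LARGE $\st$}{\LARGE $\st$}-domain is locally factorial, hence integrally closed; some such ingredient is what your proof is missing. (Your vague appeal to ``Proposition~\ref{p:ggcd}-type reasoning'' does not repair this: that proposition consumes the full {\LARGE $\st$}-property, so even if it gave an exit from (1), condition (2) would remain stranded.)

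Two smaller points. Your parenthetical justification of (3) $\ra$ (4) --- ``invertibility of $M$ is forced once we know integral closedness via Proposition~\ref{p:rootclsd} applied through the $d$-PCD route'' --- is circular, since Proposition~\ref{p:rootclsd} requires the weak $d$-PCD hypothesis, i.e., (6), not (3); the correct (and, as the paper says, well-known) argument is that an integrally closed Noetherian domain is Krull, and a $t$-local Krull domain is one-dimensional local, hence a DVR. Also, your opening remark that the $w$-operation coincides with $v$ in this setting is wrong: in a $t$-local domain $w$ coincides with $d$ (since $M$ is the unique maximal $t$-ideal, $A_{w}=AD_{M}=A$), not with $v$. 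That remark happens to be unused, but it should be deleted or corrected.
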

\begin{proof} Implications (1) $\Rightarrow$ (2), (4) $\ra$ (5) $\ra$ (6), and (4) $\ra$ (1) are trivial, (3) $\ra$ (4) is well known, and (2) $\ra$ (3) is essentially the proof of \cite[Corollary 3.9]{aaj}. Now assume (6). Then $M$ is divisorial, whence $M^{-1} \ne D$, and, clearly, $M \ne M^2$.  Therefore, according to Proposition~\ref{p:rootclsd}, $M$ must be invertible, and hence $D$ is a rank-one discrete valuation domain, as desired.  Thus (6) $\ra$ (4), and the proof is complete.
\end{proof}

\begin{lemma} \label{l:wpcd} A domain is a $w$-PCD 
{\rm (} a {\LARGE $\st$}{\LARGE $\st$}($w$)-domain, a {\LARGE $\st$}($w$)-domain, integrally closed{\rm )} if and only if it is $t$-locally a $d$-PCD {\rm (} a {\LARGE $\st$}{\LARGE $\st$}-domain, a {\LARGE $\st$}-domain, integrally closed{\rm )}.
\end{lemma}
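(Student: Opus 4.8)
The plan is to prove all four equivalences by a single \emph{localize-at-maximal-$t$-ideals} mechanism, exploiting that the $w$-operation is built from the family of localizations $\{D_P\}$ over maximal $t$-ideals $P$. The tools I would fix at the outset are: the identities $A_w=\bigcap_P AD_P$ and $A_wD_P=AD_P$ for each such $P$; the fact that each $D_P$ is a flat overring, hence LCM-stable over $D$, so that the flat map $D\to D_P$ commutes with finite intersections, with products, and with colons $(B:_D A)$ whenever $A$ is finitely generated; and the observation that principal ideals---being divisorial---are $w$-ideals, as is any finite intersection of them. Each of the four defining identities involves only these operations applied to principal (or finitely generated) ideals, with a single $w$-closure on one side, so in each case I expect the identity to be equivalent to the corresponding $w$-free identity holding in every $D_P$.

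I would treat the $w$-PCD case as the template. For the forward direction, assume $D$ is a $w$-PCD, fix a maximal $t$-ideal $P$ and $u\in K\setminus(0)$, and localize the identity $(D:_D Du^n)=((D:_D Du)^n)_w$ at $P$. Since $Du^n$ and $Du$ are principal (hence finitely generated), the colons commute with localization; and since $A_wD_P=AD_P$, the right-hand side becomes $(D:_D Du)^nD_P=(D_P:_{D_P}D_Pu)^n$, while the left becomes $(D_P:_{D_P}D_Pu^n)$. Thus $D_P$ satisfies $d_n$ for every $n$, i.e.\ $D_P$ is a $d$-PCD. For the converse, I would apply Theorem~\ref{t:lcm-stable} to the family $\{D_P\}$ of LCM-stable overrings equipped with the $d$-operation on each $D_P$: the induced star operation is exactly $I\mapsto\bigcap_P ID_P=I_w$, so if each $D_P$ satisfies $d_n$ then $D$ satisfies $w_n$, for every $n$.

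The $\st(w)$- and $\st\st(w)$-domain cases run identically. Localizing the defining equation $((\bigcap_i Da_i)(\bigcap_j Db_j))_w=\bigcap_{i,j}Da_ib_j$ at a maximal $t$-ideal $P$---using $A_wD_P=AD_P$ together with the commutation of the flat map with the finite intersections and the product---yields $(\bigcap_i D_Pa_i)(\bigcap_j D_Pb_j)=\bigcap_{i,j}D_Pa_ib_j$, which is the $\st$-identity in $D_P$ (and similarly with four elements for $\st\st$). Conversely, intersecting the local $\st$-identities over all maximal $t$-ideals and recognizing that $\bigcap_P D_Pa_ib_j=(Da_ib_j)_w=Da_ib_j$ (principal ideals being $w$-ideals) recovers the global $\st(w)$-identity. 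The integrally-closed case is the standard fact that integral closure is a local property, combined with $D=\bigcap_P D_P$: localizations of an integrally closed domain are integrally closed, and an intersection of integrally closed overrings with common quotient field is integrally closed.

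The only genuinely delicate point, which I would isolate as a preliminary remark, is the interaction of the $w$-operation with localization---namely the identities $A_wD_P=AD_P$ and the commutation of $D\to D_P$ with finite colons, intersections, and products. Once these are in hand, every implication is a mechanical localize/globalize, and the fact that the right-hand sides are finite intersections of principal (hence divisorial, hence $w$-) ideals is precisely what makes the $w$-closure on the left disappear upon intersecting the local identities.
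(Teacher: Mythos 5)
Your proposal is correct and takes essentially the same route as the paper: the forward directions are the identical localization computation using flatness of $D\to D_P$ and $A_wD_P=AD_P$, while your appeal to Theorem~\ref{t:lcm-stable} for the converse (with the $d$-operation on each $D_P$, whose induced star operation $I\mapsto\bigcap_P ID_P$ is exactly $w$) is just the general form of the computation $(D:_D Du^n)=(D:_D Du^n)_w=\bigcap_P(D_P:_{D_P}D_Pu^n)=\bigcap_P(D:_D Du)^nD_P=((D:_D Du)^n)_w$ that the paper writes out by hand. You also correctly fill in the {\LARGE $\st$}($w$), {\LARGE $\st$}{\LARGE $\st$}($w$), and integrally closed cases, which the paper dismisses as ``similar.''
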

\begin{proof} It is well known that $D$ is integrally closed if and only if it is $t$-locally integrally closed (and follows easily from the representation $D=\bigcap D_P$, where the intersection is taken over the set of maximal $t$-ideals $P$ of $D$). Let $D$ be a $w$-PCD, and let $M$ be a maximal $t$-ideal of $D$.  For $u \in K$ we have $(D_M:_{D_M} D_Mu^n)=(D:_D Du^n)D_M=((D:_D Du)^n)_wD_M=(D:_D Du)^nD_M=(D_M:_{D_M} D_Mu)^n$.  Hence $D_M$ is a $d$-PCD.  Now assume that $D$ is $t$-locally a $d$-PCD, and let $\mc P$ denote the set of maximal $t$-ideals of $D$.  Then for $u \in K$, $(D:_D Du^n)=(D:_D Du^n)_w=\bigcap_{P \in \mc P} (D:_D Du^n)D_P=\bigcap_{P \in \mc P} (D_P:_{D_P} D_Pu^n)=\bigcap_{P \in \mc P} (D:_D Du)^nD_P=((D:_D Du)^n)_w$. The details in the proofs of the other properties are similar.
\end{proof}

Lemma~\ref{l:tlocnoe} again shows that the {\LARGE $\st$}-property is much more potent than the $v$-PCD-property: according to the lemma, a $t$-local Noetherian domain satisfying {\LARGE $\st$} must be integrally closed, whereas, if $R$ is as in Example~\ref{e:poly}, then $R_M$ is a non-integrally closed $t$-local Noetherian $v$-PCD.  For still another example, a P$v$MD is automatically a $v$-PCD (Corollary~\ref{c:pvmd}), but a P$v$MD with the {\LARGE $\st$}-property is a GGCD-domain by Proposition~\ref{p:ggcd}.  On the other hand, the $v$-PCD property is useful in determining whether a domain is completely integrally closed.

Now let us step back and take another look at (3) of Lemma~\ref{l:tlocnoe} and ask: What
if we consider a (not necessarily Noetherian) $t$-local domain with maximal ideal $M$ 
divisorial but include the condition that $\bigcap M^{n}=(0)?$ We show that
the result would still be a discrete rank one valuation domain:

\begin{proposition} \label{p:locdiv} Let $(D,M)$ be a local $d$-PCD such that $M$ is divisorial and $\bigcap_{n=1}^{\infty} M^n=(0)$. Then $D$ is a rank-one discrete valuation domain.
\end{proposition}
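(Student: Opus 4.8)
The plan is to apply the trichotomy of Proposition~\ref{p:rootclsd}(2) to the maximal ideal $M$, eliminate two of the three cases using the two hypotheses, and thereby force $M$ to be invertible; the conclusion that $D$ is a DVR is then standard. Note first that since ``$M$ is divisorial'' presupposes $M$ is a nonzero fractional ideal, we have $M \ne (0)$, so $D$ is not a field. Since a $d$-PCD is in particular a weak $d$-PCD, Proposition~\ref{p:rootclsd}(2) applies with $\st = d$ and gives that $M$ is invertible, or $M^{-1} = D$, or $M = (M^2)^d = M^2$.

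Next I would rule out the last two alternatives. If $M^{-1} = D$, then divisoriality of $M$ yields $M = M_v = (M^{-1})^{-1} = (D:D) = D$, contradicting that $M$ is a proper ideal. If $M = M^2$, then $M = M^n$ for every $n \ge 1$, so $\bigcap_{n=1}^{\infty} M^n = M \ne (0)$, contradicting the hypothesis $\bigcap_{n=1}^{\infty} M^n = (0)$. Hence $M$ is invertible.

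Finally, an invertible ideal in a local ring is principal, so $M = tD$ for some nonzero $t \in M$. The hypothesis $\bigcap_{n=1}^{\infty} M^n = (0)$ then makes $D$ a rank-one discrete valuation domain by the usual argument: for each nonzero $a \in D$ there is a largest integer $n$ with $a \in M^n = t^n D$ (such a largest $n$ exists precisely because $\bigcap_{n} M^n = (0)$), and writing $a = u t^n$ forces $u \notin M$, i.e.\ $u$ a unit; the assignment $a \mapsto n$ extends to a discrete valuation on $K$ with valuation ring $D$. The argument is short because Proposition~\ref{p:rootclsd} supplies the essential dichotomy; there is no serious obstacle, and the only point requiring care is the bookkeeping that eliminates $M^{-1}=D$ and $M=M^2$ via divisoriality and the intersection hypothesis, respectively.
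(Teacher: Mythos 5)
Your proof is correct, but it follows a genuinely different route from the paper's. The paper invokes Proposition~\ref{p:maxt} (with $\st = d$; since $D$ is local, $M$ is the only maximal ideal, and $\bigcap_{n=1}^{\infty} M^n=(0)$ is exactly the hypothesis needed there) to conclude that $D$ is completely integrally closed, hence that every nonzero ideal is $v$-invertible; then $(MM^{-1})_v=D$ combined with divisoriality of $M$ forces $MM^{-1}=D$ (if $MM^{-1}\subseteq M$, then $(MM^{-1})_v\subseteq M_v=M\ne D$), so $M$ is invertible, hence principal, and $\bigcap_{n=1}^{\infty} M^n=(0)$ finishes the job. You bypass complete integral closure entirely and instead apply the trichotomy of Proposition~\ref{p:rootclsd}(2) directly: divisoriality eliminates $M^{-1}=D$ (else $M=M_v=D$), the intersection hypothesis eliminates $M=M^2$, and invertibility of $M$ is what remains; the endgame (principal maximal ideal plus $\bigcap_{n=1}^{\infty} M^n=(0)$ yields a DVR) is the same in both arguments. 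Your route is arguably more elementary, resting only on the structural trichotomy from Section~\ref{s:weak}, and it makes transparent exactly which hypothesis kills which degenerate case; the paper's route is slightly less self-contained but records the stronger intermediate fact that $D$ is completely integrally closed, which fits the theme of Section~\ref{s:cic} and is the mechanism the paper reuses elsewhere (e.g., in Theorem~\ref{t:krullw}). Both proofs use the hypotheses in the same spirit: divisoriality prevents $M_v$ from blowing up to $D$, and the intersection condition rules out idempotence of $M$ and supplies one-dimensionality. One small point of care you handled correctly: noting that divisoriality presupposes $M\ne(0)$, so $D$ is not a field, which the paper leaves implicit.
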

\begin{proof} By Proposition~\ref{p:maxt}, $D$ is completely integrally closed.  Thus $(MM^{-1})_v=D$, and then, since $M$ is divisorial, $MM^{-1}=D$.  Hence $M$ is principal.  The condition $\bigcap M^n=(0)$ then ensures that $D$ is one-dimensional, that is, that $D$ is a rank-one discrete valuation domain.
\end{proof}

We close with yet another characterization of Krull domains.

\begin{theorem} \label{t:krullw} A domain $D$ is a Krull domain if and only if it has the folllowing properties: \begin{enumerate}
\item Each maximal $t$-ideal of $D$ is divisorial.
\item $\bigcap_{n=1}^{\infty} (M^n)_w=(0)$ for each maximal $t$-ideal $M$ of $D$.
\item $D$ is a $w$-PCD.
\end{enumerate}
\end{theorem}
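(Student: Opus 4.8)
The plan is to prove the theorem by localizing at maximal $t$-ideals and reducing to the local case already handled in Proposition~\ref{p:locdiv}. First I would establish the forward direction: if $D$ is a Krull domain, then each maximal $t$-ideal is a height-one prime and is divisorial (giving (1)), the $w$-operation equals the intersection over the rank-one discrete valuation overrings $D_P$ so $\bigcap_n (M^n)_w = \bigcap_n M^n D_M \cap (\text{other components}) = (0)$ since $M D_M$ is principal with zero intersection of powers (giving (2)), and $D$ is a P$v$MD, hence a $v$-PCD by Corollary~\ref{c:pvmd}; since Krull domains are $t$-locally DVRs and the $w$-operation respects $t$-localization, $D$ is in fact a $w$-PCD (giving (3)). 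I would verify this last point directly using the equality $(D:_D Du^n)_w = \bigcap_{P} (D_P :_{D_P} D_P u^n)$ over maximal $t$-ideals $P$, exactly as in the proof of Lemma~\ref{l:wpcd}.

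For the converse, the key tool is Lemma~\ref{l:wpcd}, which tells me that the $w$-PCD hypothesis (3) is equivalent to $D$ being $t$-locally a $d$-PCD. The plan is to fix a maximal $t$-ideal $M$ and pass to $D_M$. Here I must invoke the caveat mentioned before Lemma~\ref{l:tlocnoe}: localizing at a maximal $t$-ideal need not yield a $t$-local domain in general. However, hypothesis (1) says $M$ is divisorial, and I would argue that $M D_M$ is then divisorial in $D_M$, so that $(D_M, MD_M)$ is a local $d$-PCD with divisorial maximal ideal. Next, hypothesis (2), $\bigcap_n (M^n)_w = (0)$, localizes to give $\bigcap_n (MD_M)^n = (0)$, using that $(M^n)_w D_M = M^n D_M = (MD_M)^n$ for each $n$ (the $w$-operation localizes correctly at maximal $t$-ideals). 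At this point $D_M$ satisfies precisely the hypotheses of Proposition~\ref{p:locdiv}, so $D_M$ is a rank-one discrete valuation domain.

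Having shown that $D_M$ is a DVR for every maximal $t$-ideal $M$, I would conclude that $D$ is a P$v$MD of $t$-dimension one and is completely integrally closed $t$-locally; more directly, since each $D_M$ is integrally closed, $D$ is integrally closed by the $t$-local characterization in Lemma~\ref{l:wpcd}, and each maximal $t$-ideal is divisorial by hypothesis (1), so $D$ is an integrally closed $H$-domain. Combining with hypothesis (2) and the equality in Theorem~\ref{t:pcdchar} (which holds since $D$ is a $w$-PCD, hence a $v$-PCD by Lemma~\ref{l:ge}), I can apply the implication (6) $\Rightarrow$ (1) of Corollary~\ref{c:hdom} to conclude that $D$ is a Krull domain. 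Alternatively, and perhaps more cleanly, the fact that $D_P$ is a DVR at each maximal $t$-ideal together with finite-type considerations gives a Krull domain directly.

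The main obstacle I anticipate is the careful handling of the localization step, specifically verifying that hypothesis (1) guarantees $MD_M$ is divisorial in $D_M$ and that hypothesis (2) descends correctly to $\bigcap_n (MD_M)^n = (0)$. The subtlety is precisely the caveat that $t$-localization is delicate; I would need to confirm that the divisoriality of $M$ in $D$ (not merely $M$ being a $t$-ideal) is what forces $MD_M$ to be divisorial, so that Proposition~\ref{p:locdiv} genuinely applies. The remaining bookkeeping—commuting $w$ with localization and assembling the global conclusion from the local DVR data—is routine given Lemma~\ref{l:wpcd} and Corollary~\ref{c:hdom}.
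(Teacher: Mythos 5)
Your forward direction is essentially sound (the paper gets (1)--(3) more quickly by citing Corollary~\ref{c:hdom} and the coincidence of $v$, $t$, and $w$ on a Krull domain, but your direct verification works). The converse, however, has a genuine gap at exactly the point you flag and do not actually resolve: the descent of hypotheses (1) and (2) to $D_M$ so that Proposition~\ref{p:locdiv} applies. First, divisoriality does not localize: $M=M_v$ in $D$ does not imply $MD_M=(MD_M)_{v_{D_M}}$ in $D_M$. This is precisely the ``well-behaved prime'' caveat stated before Lemma~\ref{l:tlocnoe}, which the paper can dispose of only in the strong Mori setting via Kang's lemma --- and Theorem~\ref{t:krullw} carries no such chain condition. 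You write ``I would argue that $MD_M$ is then divisorial,'' but no such argument exists in general; there are prime $t$-ideals $P$ for which $PD_P$ is not even a $t$-ideal of $D_P$. Second, your localization of (2) is invalid: the equality $(M^n)_wD_M=(MD_M)^n$ holds for each fixed $n$, but infinite intersections do not commute with localization, so one only gets $\bigl(\bigcap_{n=1}^{\infty}(M^n)_w\bigr)D_M\subseteq\bigcap_{n=1}^{\infty}(MD_M)^n$, and the vanishing of the left side says nothing about the right. Since $(M^n)_w\subseteq M^nD_M\cap D$, hypothesis (2) is genuinely weaker than its localized form $\bigcap_{n=1}^{\infty}(MD_M)^n=(0)$.

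Your two fallbacks fail as well. Corollary~\ref{c:hdom}(5) and (6) require $\bigcap_{n=1}^{\infty}(P^n)_v=(0)$, and since $(P^n)_w\subseteq(P^n)_v$, the $w$-version in hypothesis (2) is strictly weaker and does not supply this; the whole point of Theorem~\ref{t:krullw} is that the weaker $w$-condition suffices. And the remark that ``$D_P$ a DVR at each maximal $t$-ideal plus finite-type considerations gives a Krull domain directly'' needs finite $t$-character, which is not among the hypotheses (an almost Dedekind-type situation is not ruled out by $t$-local data alone). The paper's proof avoids localization entirely: because $w$ is a finite-type star operation whose maximal $w$-ideals are exactly the maximal $t$-ideals, Proposition~\ref{p:maxt} applied with $\st=w$ converts hypotheses (2) and (3) directly into complete integral closure of $D$; hypothesis (1) makes $D$ an $H$-domain by \cite[Proposition 2.4]{hz}; and a completely integrally closed $H$-domain is a Krull domain by \cite[3.2(d)]{gv}. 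That global route through Proposition~\ref{p:maxt}, rather than the local route through Proposition~\ref{p:locdiv}, is how your argument should be repaired.
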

\begin{proof} It is well-known that a Krull domain has the first property and that the $v$-, $t$- and $w$-operations coincide.  Properties (2) and (3) then follow from Corollary~\ref{c:hdom}.  Now assume that $D$ is a domain with the properties listed.  By (1), $R$ is an $H$-domain \cite[Proposition 2.4]{hz}.  It is well-known that each maximal $w$-ideal of $D$ is a maximal $t$-ideal (and vice versa) and that the $w$-operation is of finite type.  Hence (2) and (3), together with Proposition~\ref{p:maxt}, imply that $D$ is completely integrally closed. Therefore, $D$, being a completely integrally closed H-domain, is a Krull domain \cite[3.2(d)]{gv}.
\end{proof}

\end{document}